\numberwithin{equation}{section}  
\DeclareMathAlphabet{\curly}{U}{rsfs}{m}{n}  
\theoremstyle{remark}
\newtheorem{remark}{Remark}
\theoremstyle{plain}
\newtheorem{lem}{Lemma}[section]
\newtheorem{thm}{Theorem}
\newtheorem{cor}{Corollary}
\newtheorem{definition}{Definition}
\newcommand{\Z}{\mathbb{Z}}
\newcommand{\R}{\mathbb{R}}
\newcommand{\E}{\mathbb{E}}   
\newcommand{\PR}{\mathbb{P}}  
\renewcommand{\pmod}[1]{\allowbreak\mkern7mu({\operator@font mod}\,\,#1)}
\newcommand{\bal}{\[\begin{aligned}}
\newcommand{\eal}{\end{aligned}\]}
\newcommand{\be}{\begin{equation}}
\newcommand{\ee}{\end{equation}}
\newcommand{\ssum}[1]{\sum_{\substack{#1}}}  
\newcommand{\eps}{\ensuremath{\varepsilon}}
\renewcommand{\le}{\leqslant}
\renewcommand{\leq}{\leqslant}
\renewcommand{\ge}{\geqslant}
\renewcommand{\geq}{\geqslant}
\newcommand{\order}{\asymp}      
\renewcommand{\(}{\left(}
\renewcommand{\)}{\right)}
\newcommand{\pfrac}[2]{\left(\frac{#1}{#2}\right)}  
\newcommand{\ba}{\ensuremath{\mathbf{a}}}
\newcommand{\bfe}{\ensuremath{\mathbf{e}}}
\newcommand{\bfW}{\ensuremath{\mathbf{W}}}
\newcommand{\asym}{\sim}   
\newcommand{\PP}{\mathcal{P}}
\newcommand{\QQ}{\mathcal{Q}}
\newcommand{\cS}{\mathcal{S}}
\newcommand{\cR}{\mathcal{R}}
\renewcommand{\mod}{\bmod}  
\begin{document}

\title{Long gaps between primes}

\author{Kevin Ford}
\address{Department of Mathematics\\ 1409 West Green Street \\ University
of Illinois at Urbana-Champaign\\ Urbana, IL 61801\\ USA}
\email{ford@math.uiuc.edu}

\author{Ben Green}
\address{Mathematical Institute\\
Radcliffe Observatory Quarter\\
Woodstock Road\\
Oxford OX2 6GG\\
England }
\email{ben.green@maths.ox.ac.uk}

\author{Sergei Konyagin}
\address{Steklov Mathematical Institute\\
8 Gubkin Street\\
Moscow, 119991\\
Russia}
\email{konyagin@mi.ras.ru}

\author{James Maynard}
\address{Mathematical Institute\\
Radcliffe Observatory Quarter\\
Woodstock Road\\
Oxford OX2 6GG\\
England }
\email{james.alexander.maynard@gmail.com}

\author{Terence Tao}
\address{Department of Mathematics, UCLA\\
405 Hilgard Ave\\
Los Angeles CA 90095\\
USA}
\email{tao@math.ucla.edu}

\begin{abstract} Let $p_n$ denote the $n$-th prime.  We prove that
\[\max_{p_{n+1} \leq X} (p_{n+1}-p_n)  \gg \frac{\log X \log \log X\log\log\log\log X}{\log \log \log X}\]
for sufficiently large $X$, improving upon recent bounds 
of the first three and fifth authors and of the fourth author.
 Our main new ingredient is a generalization of a hypergraph covering
 theorem of Pippenger and Spencer, proven using the R\"odl nibble method.
\end{abstract}

\maketitle

\setcounter{tocdepth}{1}
\tableofcontents

\section{Introduction}

Let $p_n$ denote the $n^{\operatorname{th}}$ prime, and let
$$
G(X) := \max_{p_{n+1} \leq X} (p_{n+1}-p_n)
$$
denote the the maximum gap between consecutive primes less than $X$. It is clear from the prime number theorem that
\[ G(X) \geq (1 + o(1)) \log X,\]
as the \emph{average} gap between the prime numbers which are $\le X$ is $\asym \log X$. 
In 1931, Westzynthius \cite{West} proved that infinitely often, the gap between consecutive 
prime numbers can be an arbitrarily large multiple of the average gap, that is,
$G(X)/\log X\to \infty$ as $X\to\infty$, improving upon prior results of Backlund \cite{back} and Brauer-Zeitz \cite{brauer}.  Moreover, he proved the quantitative
bound\footnote{As usual in the subject, $\log_2 x = \log \log x$, $\log_3 x = \log \log \log x$, and so on.  The conventions for asymptotic notation such as $\ll$ and $o()$ will be defined in Section \ref{not-sec}.}
\[
G(X) \gg \frac{\log X \log_3 X}{\log_4 X}.
\]
In 1935 Erd\H{o}s \cite{erdos-gaps} sharpened this to
\[ G(X) \gg \frac{\log X \log_2X}{(\log_3X)^2}\] and in 1938 Rankin \cite{R1} 
made a subsequent improvement
\[ G(X) \geq (c + o(1)) \frac{\log X \log_2 X \log_4 X}{(\log_3 X)^2}\] with $c = \frac{1}{3}$. The constant $c$ was increased several times: to $\frac{1}{2}e^{\gamma}$ by Sch\"onhage \cite{schonhage}, then to $c = e^{\gamma}$ by Rankin \cite{rankin-1963}, to $c = 1.31256 e^{\gamma}$ by Maier and Pomerance \cite{MP} and, most recently, to $c = 2e^{\gamma}$ by Pintz \cite{P}.

Recently, in two independent papers \cite{FGKT,maynard-large}, the authors showed that $c$ could be taken to be arbitrarily large, answering in the affirmative a long-standing conjecture of Erd\H os \cite{Erd90}.  The methods of proof in \cite{FGKT} and \cite{maynard-large} differed in some key aspects. The arguments in \cite{FGKT} used recent results \cite{gt-linearprimes,gt-nilmobius, GTZ} on the number of solutions to linear equations in primes, whereas the arguments in \cite{maynard-large} instead relied on multidimensional
prime-detecting
sieves introduced in \cite{maynard-gaps}.  The latter arguments have the advantage of coming with quantitative control on the error terms, as worked out in \cite{maynard-dense}.  Using this, in unpublished work of the fourth author the above bound was improved to
\begin{equation}\label{G1-bound}
 G(X) \gg \frac{\log X \log_2 X}{\log_3 X}
\end{equation}
for sufficiently large $X$. 

Our main theorem is the following further quantitative improvement.

\begin{thm}[Large prime gaps]\label{mainthm} For any sufficiently large $X$, one has
$$ G(X) \gg \frac{\log X \log_2 X \log_4 X}{\log_3 X}.$$
The implied constant is effective.
\end{thm}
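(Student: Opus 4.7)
The plan is to reduce Theorem~\ref{mainthm} to an efficient sieving problem via the Erd\H{o}s--Rankin construction, and then solve that sieving problem by combining the multidimensional sieve of \cite{maynard-dense} with a new hypergraph covering theorem proved by the R\"odl nibble. Concretely I would fix a target gap length $y \asymp \frac{\log X \log_2 X \log_4 X}{\log_3 X}$ and a sieving parameter $x$ somewhat larger than $y$, with $\log x \ll \log X$. By the prime number theorem $\prod_{p \le x} p = e^{(1+o(1))x}$, so via the Chinese Remainder Theorem it suffices to exhibit, for each prime $p \le x$, a residue class $a_p \pmod p$ such that every integer $n \in (0,y]$ lies in at least one of them: taking $m$ with $m \equiv -a_p \pmod p$ for all such $p$, every integer in $(m,m+y]$ has a prime factor in $(0,x]$ and hence is composite, producing a prime gap of size $\ge y$. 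The theorem is thus reduced to the combinatorial task of covering $(0,y]$ by one residue class per prime $p \le x$.

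I would then split the primes in $(1,x]$ into three ranges. \emph{Small primes} $p \le z$, for $z$ a small power of $y$, are assigned the residue $0$; this kills every $n$ with a small prime factor and leaves a ``rough'' survivor set $\cS \subset (0,y]$ of density $\asymp 1/\log z$. \emph{Very large primes} $p \in (y,x]$ can each eliminate at most one survivor and will be used to mop up a small leftover set at the end. The substantive task is with the \emph{medium primes} $p \in (z,y]$: they must collectively cover all but at most $\pi(x)-\pi(y)$ elements of $\cS$, with each prime contributing only a single residue class.

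For the medium range I would construct a weighted hypergraph $H$ with vertex set $\cS$ and, for each prime $p \in (z,y]$, one ``bundle'' of hyperedges indexed by residue classes, namely $e_{p,a} := \{n \in \cS : n \equiv a \pmod p\}$. Each such edge is weighted by $\sum_{n \in e_{p,a}} \nu(n)$, where $\nu$ is a multidimensional Maynard-type sieve weight concentrated on prime-rich tuples of translates; the level-of-distribution bounds from \cite{maynard-dense} then imply that $H$ is approximately regular with small codegrees. The generalized Pippenger--Spencer covering theorem---the main new combinatorial ingredient of the paper, proved by the R\"odl nibble---selects exactly one residue class $a_p$ per prime $p$ from its bundle so that $\bigcup_p e_{p,a_p}$ covers $\cS$ up to a negligible exceptional set, which the very large primes then cover individually.

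The main obstacle, and the reason a new covering theorem is needed at all, is that the edges of this hypergraph arrive in \emph{correlated bundles}---exactly one edge per prime $p$ must be chosen---rather than forming the statistically independent system handled by the classical Pippenger--Spencer theorem; the R\"odl nibble must therefore be redesigned to respect the per-prime constraints while preserving quasi-regularity at every stage of the greedy removal. A secondary challenge is tightening the quantitative error analysis of the Maynard sieve just enough to extract the extra factor of $\log_4 X / \log_3 X$ over the previous bound \eqref{G1-bound}, which forces a delicate balancing of $y$, $z$ and $x$.
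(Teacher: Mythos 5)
Your proposal names the right ingredients (Erd\H{o}s--Rankin reduction via CRT, Maynard-type multidimensional sieve weights, a generalized Pippenger--Spencer theorem proved by a modified R\"odl nibble), but the reduction itself is set up incorrectly in a way that would destroy the bound. You take the sieving limit $x$ to be \emph{larger} than the interval length $y$; since the gap produced by the CRT argument sits just below $\prod_{p\le x}p = e^{(1+o(1))x}$, this forces $X\approx e^{x}$ and hence a gap of length $y< x\approx \log X$, i.e.\ worse than the trivial bound. The whole content of the Erd\H{o}s--Rankin method is the reverse inequality: one must cover an interval of length $y$ much \emph{longer} than $x$ (here $y\asymp x\log x\log_3 x/\log_2 x$, as in \eqref{ydef}) using one residue class per prime $p\le x$, and the quality of the theorem is exactly the factor by which $y$ exceeds $x$. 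Your three-range decomposition inherits this inversion: the primes receiving the ``smart'' residue classes must be those of size comparable to $x$ (the paper uses $\PP=(x/2,x]$), not primes ranging up to the interval length, because the tuple $n_p+h_1p,\dots,n_p+h_rp$ with $p\asymp x$ is what the prime-detecting sieve controls.

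Two further stages of the construction are missing, and without them the counting fails. First, you assign $0\bmod p$ only to primes up to a small power $z$ and then work directly with the resulting rough set $\cS$ of density $1/\log z$; but $\#\cS\asymp y/\log z\asymp x$, which is far larger than $\#\PP\cdot\log_2 x\asymp x\log_2 x/\log x$, and moreover the elements of $\cS$ are not primes, so the sieve weights cannot be used to find residue classes capturing $\gg\log_2 x$ of them. The paper also sieves by $0\bmod p$ for the medium primes $p\in(z,x/2]$, which reduces the survivors in $(x,y]$ to (essentially) the primes $\QQ$, and then sieves by \emph{random} classes $a_s\bmod s$ for $s\in(\log^{20}x,z]$, thinning $\QQ$ to a set of size $\asymp 80c\,x\log_2 x/\log x$; only after both steps does the density match what $\#\PP$ residue classes, each capturing $\asymp\log_2 x$ primes, can cover. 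Second, you misidentify the new difficulty in the covering theorem: the fact that exactly one edge is chosen per prime (the hypergraphs $E_i$ varying with $i$) is the easy part to generalize. The genuinely new obstacle is that the edges $(b_p\bmod p)\cap V$ have widely varying cardinalities, which biases the nibble against large edges; the paper's fix is to reweight the conditional distribution of each $\mathbf{e}_i$ by $P_{m-1}(\mathbf{e}_i)^{-1}$ at every stage (see \eqref{qorn}) to cancel this bias. Without that reweighting idea, the nibble does not close and one does not beat \eqref{G1-bound}.
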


Our arguments combine ideas from the previous papers \cite{FGKT,maynard-large}, and also involve a new generalization of a hypergraph covering theorem of Pippenger and Spencer \cite{pippenger} which is of independent interest.  In a sequel \cite{FMT} to this paper, a subset of the authors will extend the above theorem to also cover chains of consecutive large gaps between primes, by combining the methods in this paper with the Maier matrix method.  In view of this, we have written some of the key propositions in this paper in slightly more generality than is strictly necessary to prove Theorem \ref{mainthm}, as the more general versions of these results will be useful in the sequel \cite{FMT}.

The results and methods of this paper have also subsequently been applied by Maier and Rassias \cite{MR} (extending the method of the first author, Heath-Brown and the third author \cite{FHBK}) to obtain large prime gaps (of the order of that in Theorem \ref{mainthm}) that contain a perfect $k^{\operatorname{th}}$ power of a prime for a fixed $k$, and by Baker and Freiberg \cite{BF} to obtain lower bounds on the density of limit points of prime gaps normalized by any function that grows slightly slower than the one in Theorem \ref{mainthm}.  We refer the interested reader to these papers for further details.

\subsection{Historical background}

Based on a probabilistic model of primes, Cram\'er \cite{Cra} conjectured
that 
\[
\limsup_{X\to\infty} \frac{G(X)}{\log^2 X} = 1. 
\]
Granville \cite{Gra} offered a refinement of Cram\'er's model and
has conjectured that the $\limsup$ above is in fact at least $2e^{-\gamma}=1.1229\ldots$.  These conjectures are well beyond the reach of our methods.
Cram\'er's model also predicts that the normalized prime gaps
$\frac{p_{n+1}-p_n}{\log p_n}$ should have exponential distribution, that is,
$p_{n+1}-p_n \ge C\log p_n$ for about $e^{-C}\pi(X)$ primes $\le X$, for any fixed $C>0$.
Numerical evidence from prime calculations up to 
$4\cdot 10^{18}$ \cite{numerical-tos}
matches this prediction quite closely, with the exception of values of $C$
close to $\log X$, for which there is very little data available.  In fact,
$\max_{X\le 4\cdot 10^{18}} G(X)/\log^2 X \approx 0.9206$, slightly below the
predictions of Cram\'er and Granville.

Unconditional upper bounds for $G(X)$ are far from the conjectured
truth, the best being
$G(X) \ll X^{0.525}$ and due to Baker, Harman and Pintz \cite{BHP}.
Even the Riemann Hypothesis only\footnote{Some slight improvements are available if one also assumes some form of the pair correlation conjecture; see \cite{heath}.} furnishes the bound $G(X)\ll X^{1/2}\log X$ \cite{Cra1920}.

All works on lower bounds for $G(X)$ have followed a similar overall plan
of attack: show that there are at least $G(X)$ consecutive
integers in $(X/2,X]$, 
each of which has a ``very small'' prime factor.  
To describe the results, we make the following definition.

\begin{definition}\label{y-def}
Let $x$ be a positive integer. Define $Y(x)$ to be the largest integer
$y$ for which one may select residue classes $a_p \mod p$, one for
each prime $p \leq x$, which together ``sieve out'' (cover) the whole
interval $[y] = \{1,\dots,\lfloor y\rfloor\}$.  Equivalently, $Y(x)$
is the largest integer $m$ so that there are $m$ consecutive integers
coprime to $P(x)$.
\end{definition}

The relation between this function $Y$ and gaps between primes is encoded in the following simple lemma.

\begin{lem}\label{lem11}
Write $P(x)$ for the product of the primes less than or equal to $x$. Then
$$G(P(x)+ Y(x) + x) \geq Y(x).$$
\end{lem}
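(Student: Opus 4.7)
My plan is to follow the classical Westzynthius CRT--gluing argument. Set $y := Y(x)$ and, by Definition~\ref{y-def}, fix residue classes $a_p \pmod p$, one for each prime $p \le x$, whose union covers $\{1, 2, \ldots, y\}$. By the Chinese Remainder Theorem, the system $m \equiv -a_p \pmod p$ (as $p$ ranges over primes $\leq x$) has a unique solution modulo $P(x)$; I shift it to the unique representative $m \in (x, x + P(x)]$, so that in particular $m \geq x + 1$.

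The next step is to read off a run of $y$ consecutive composites. For each $j \in \{1,\ldots,y\}$ the covering property furnishes a prime $p \le x$ with $j \equiv a_p \pmod p$, whence
\[ m + j \equiv -a_p + j \equiv 0 \pmod p. \]
Since $m + j > x \geq p$, the prime $p$ is a \emph{proper} divisor of $m + j$, so $m+j$ is composite. Hence $m+1, m+2, \ldots, m+y$ is a block of $y$ consecutive composite integers, and the largest satisfies $m + y \leq x + P(x) + y = P(x) + Y(x) + x = X$, so the whole block lies in $[1, X]$.

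To finish, let $p_n$ be the largest prime $\leq m$ and $p_{n+1}$ the next prime. The composite block forces $p_{n+1} \geq m + y + 1$, so $p_{n+1} - p_n \geq y + 1 \geq y$. To conclude $G(X) \geq Y(x)$ I must also confirm that $p_{n+1} \leq X$, since otherwise the pair $(p_n, p_{n+1})$ is not counted by the definition of $G(X)$. The "$+x$" padding in $X = P(x) + Y(x) + x$ is exactly what supplies the extra room beyond $m + y$; combined with a standard elementary prime-gap estimate, this places a prime in $(m + y, X]$, so $p_{n+1} \leq X$ and hence $G(X) \geq p_{n+1} - p_n \geq y = Y(x)$. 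This last verification is the only non-routine point; the CRT setup and compositeness check are entirely mechanical, and the main conceptual obstacle — making sure the candidate prime $p_{n+1}$ actually lies within the window $[1,X]$ — is absorbed by the tailored choice of $X$ in the statement.
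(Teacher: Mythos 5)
Your CRT construction and the compositeness check are exactly the paper's argument, and they are correct: choosing $m \in (x, x+P(x)]$ with $m \equiv -a_p \pmod p$ forces each $m+j$, $1 \le j \le y$, to have a prime factor $p \le x < m+j$, hence to be composite, and $m+y \le P(x)+Y(x)+x = X$. The problem is your final paragraph. The ``$+x$'' in $X$ is not spare room to the right of $m+y$: it is entirely consumed by the requirement $m > x$, which you need precisely so that the prime $p$ dividing $m+j$ is a \emph{proper} divisor. In the worst case $m = x+P(x)$, so $m+y = X$ and the interval $(m+y, X]$ is empty; there is nothing left to put a prime in. Even in the best case that interval has length at most $x$, while $m+y$ is of size $e^{(1+o(1))x}$, and no ``standard elementary prime-gap estimate'' produces a prime in an interval of length $x$ at height $e^{x}$ (Bertrand's postulate needs length comparable to $m$; even the Baker--Harman--Pintz bound needs length $m^{0.525}$). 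So the step ``this places a prime in $(m+y,X]$'' would fail as written.

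That said, the issue you are trying to patch is real but cosmetic, and the paper's own proof does not address it either: it simply asserts that the run of composites yields ``a gap of length $y$ amongst the primes less than $m+y$,'' whereas under the stated convention $G(X) = \max_{p_{n+1}\le X}(p_{n+1}-p_n)$ one does, strictly, need the prime terminating the gap to be at most $X$. The honest fixes are either to read $G$ with the equally common normalization $\max_{p_n \le X}$, for which $p_n \le m \le X$ already suffices, or to note that the lemma is only used through the asymptotic consequence $G(X) \ge Y((1+o(1))\log X)$, where replacing $X$ by $2X$ and then invoking Bertrand's postulate to get $p_{n+1} \le 2X$ costs nothing. Either remark would complete your argument; the specific justification you gave does not.
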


\begin{proof}
Set $y = Y(x)$, and select residue classes $a_p \mod p$, one for each prime $p \leq x$, which cover $[y]$. By the Chinese remainder theorem there is some $m$, $x <  m \leq x +P(x)$, with $m \equiv -a_p \pmod{p}$ for all primes $p \leq x$. We claim that all of the numbers $m+1,\dots, m+y$ are composite, which means that there is a gap of length $y$ amongst the primes less than $m+y$, thereby concluding the proof of the lemma. To prove the claim, suppose that $1 \leq t \leq y$. Then there is some $p$ such that $t \equiv a_p \pmod{p}$, and hence $m + t \equiv -a_p + a_p \equiv 0 \pmod{p}$, and thus $p$ divides $m + t$. Since $m+t > m > x \geq p$, $m + t$ is indeed composite.
\end{proof}

By the prime number theorem we have $P(x) = e^{(1 + o(1))x}$. It turns out (see below) that $Y(x)$ has size $x^{O(1)}$. Thus the bound of Lemma \ref{lem11} implies that
$$
 G(X) \geq Y\big((1 + o(1)) \log X\big)
$$
as $X \to \infty$.  In particular, Theorem \ref{mainthm} is a consequence of the bound
\begin{equation}\label{Y-lower}
Y(x) \gg \frac{x \log x \log_3 x}{\log_2 x},
\end{equation}
which we will establish later in this paper.  This improves on the bound $Y(x) \gg \frac{x \log x\log_3 x}{\log_2^2 x}$ obtained by Rankin \cite{R1}, and the improvement $Y(x) \gg \frac{x \log x}{\log_2 x}$ obtained in unpublished work of the fourth author.

The function $Y$ is intimately related to \emph{Jacobsthal's function} $j$. If $n$ is a positive integer then $j(n)$ is defined to be the maximal gap between integers coprime to $n$. In particular $j(P(x))$ is the maximal gap between numbers free of prime factors $\leq x$, or equivalently $1$ plus the longest string of consecutive integers, each divisible by some prime $p \leq x$.  The Chinese remainder theorem construction given in the proof of Lemma \ref{lem11} in fact proves that 
\begin{equation}\label{j-y}
 Y(x) = j(P(x))-1. 
\end{equation}
This observation, together with results in the literature, gives upper bounds for $Y$. The best upper bound known is $Y(x) \ll x^2$, which comes from Iwaniec's work \cite{Iw} on Jacobsthal's function. It is conjectured by Maier and Pomerance that in fact $Y(x) \ll x (\log x)^{2 + o(1)}$. This places a serious (albeit conjectural) upper bound on how large gaps between primes we can hope to find via lower bounds for $Y(x)$: a bound in the region of $G(X) \gtrapprox \log X (\log \log X)^{2 + o(1)}$, far from Cram\'er's conjecture, appears to be the absolute limit of such an approach.

The lower bound on certain values of Jacobsthal's function provided by \eqref{Y-lower}, \eqref{j-y} can be inserted directly into \cite[Theorem 1]{Pom}
 to obtain a lower bound for the maximum over $l$ of $p(k,l)$, the least
prime in the arithmetic progression $l\mod k$, in the case when the modulus $k$ has no small prime factors.  We have

\begin{cor}  For any natural number $k$, let $M(k)$ denote the maximum value of $p(k,l)$ over all $l$ coprime to $k$.  Suppose that $k$ has no prime factors less than or equal to $x$ for some $x \leq \log k$.  Then, if $x$ is sufficiently large (in order to make $\log_2 x$, $\log_3 x$ positive), one has the lower bound
$$ M(k) \gg k \frac{x \log x \log_3 x}{\log_2 x}.$$
\end{cor}

\begin{proof} Apply [36, Theorem 1]  with $m=P(x)$ if
$x\le \frac12 \log k$ and with $m=P(\frac12\log k)$ if $\frac12 \log k<x\le\log k$.
\end{proof}

In view of \cite[Theorem 3]{Pom}, one may also expect to be able to prove a lower bound of the form
$$ M(k) \gg \phi(k) \frac{\log k \log_2 k \log_4 k}{\log_3 k}$$
for a set of natural numbers $k$ of density $1$, but we were unable to find a quick way to establish this from the results in this paper.

\subsection{Method of proof}

Our methods here are a combination of those in our previous papers \cite{FGKT,maynard-large}, which are in turn based in part on arguments in previous papers, particularly those of Rankin \cite{R1} and Maier-Pomerance \cite{MP}; we also modify some arguments of Pippenger and Spencer \cite{pippenger} in order to make the lower bound in Theorem \ref{mainthm} as efficient as possible.

As noted above, to prove Theorem \ref{mainthm}, it suffices to sieve out an interval $[y]$ by residue classes $a_p \mod p$ for each prime $p \leq x$, where $y \order \frac{x \log x \log_3 x}{\log_2 x}$.  Actually, it is permissible to have $O(\frac{x}{\log x})$ survivors in $[y]$ that are not sieved out by these residue classes, since one can easily eliminate such survivors by increasing $x$ by a constant multiplicative factor.  Also, for minor technical reasons, it is convenient to sieve out $[y] \backslash [x]$ rather than $[y]$.

Following \cite{FGKT}, we will sieve out $[y] \backslash [x]$ by the residue classes $0 \mod p$ both for very small primes $p$ ($p \leq \log^{20} x$) and medium primes $p$ (between $z := x^{\log_3 x/(4\log_2 x)}$ and $x/2$).  The survivors of this process are essentially the set $\QQ$ of primes between $x$ and $y$.  After this initial sieving, the next stage will be to randomly sieve out residue classes $\mathbf{\vec a} = (\mathbf{a}_s \mod s)_{s \in \cS}$ for small primes $s$ (between $\log^{20} x$ and $z$).  (This approach differs slightly from the approach taken in \cite{maynard-large} and earlier papers, in which the residue classes $1 \mod s$ for small (and very small) primes are used instead.)  This cuts down the set of primes $\QQ$ to a smaller set $\QQ \cap S(\mathbf{\vec a})$, whose cardinality is typically on the order of $\frac{x}{\log x} \log_2 x$.  The remaining task is then to select integers $n_p$ for each prime $p$ between $x/2$ and $x$, such that the residue classes $n_p \mod p$ cut down $\QQ \cap S(\mathbf{\vec a})$ to a set of survivors of size $O( \frac{x}{\log x})$.

Assuming optimistically that one can ensure that the different residue classes $n_p \mod p$ largely remove disjoint sets from $\QQ \cap S(\mathbf{\vec a})$, we are led to the need to select the integers $n_p$ so that each $n_p \mod p$ contains about $\log_2 x$ of the primes in $\QQ \cap S(\mathbf{\vec a})$.  In \cite{FGKT}, the approach taken was to use recent results on linear equations in primes \cite{gt-nilmobius,gt-linearprimes, GTZ} to locate arithmetic progressions $q, q+r! p, \dots, q+(r-1)r!p$ consisting entirely of primes for some suitable $r$, and then to take $n_p = q$.  Unfortunately, due to various sources of ineffectivity in the known results on linear equations in primes, this method only works when $r$ is fixed or growing extremely slowly in $x$, whereas here we would need to take $r$ of the order of $\log_2 x$.  To get around this difficulty, we use instead the methods from \cite{maynard-large}, which are based on the multidimensional 
sieve methods introduced in \cite{maynard-gaps} to obtain bounded intervals with many primes.  A routine modification of these methods gives tuples $q+h_1 p, \dots, q+h_k p$ which contain $\gg \log k$ primes, for suitable large $k$; in fact, by using the calculations in \cite{maynard-dense}, one can take $k$ as large as $\log^c x$ for some small absolute constant $c$ (e.g. $c=1/5$), so that the residue class $q \mod p$ is guaranteed to capture $\gg \log_2 x$ primes in $\QQ$.  

There is however a difficulty due to the overlap between the residue classes $n_p \mod p$.  In both of the previous papers \cite{FGKT,maynard-large}, the residue classes were selected randomly and independently of each other, but this led to a slight inefficiency in the sieving: with each residue class $n_p \mod p$ containing approximately $\log_2 x$ primes, probabilistic heuristics suggest that one would have needed the original survivor set $\QQ \cap S(\mathbf{\vec a})$ to have size about $\frac{x}{\log x} \frac{\log_2 x}{\log_3 x}$ rather than $\frac{x}{\log x} \log_2 x$ if one is to arrive at $O( \frac{x}{\log x} )$ after the final sieving process.  This is what ultimately leads to the additional loss of $\log_4 x$ in \eqref{G1-bound} compared to Theorem \ref{mainthm}.  To avoid this difficulty, we use some ideas from the literature on efficient hypergraph covering.  Of particular relevance is the work of Pippenger and Spencer \cite{pippenger} in which it is shown that whenever one has a large hypergraph $G = (V,E)$ which is uniform both in the sense of edges $e \in E$ having constant cardinality, and also in the sense of the degrees $\# \{ e \in E: v \in e \}$ being close to constant in $v$, one can efficiently cover most of $V$ by almost disjoint edges in $E$.  Unfortunately, the results in \cite{pippenger} are not directly applicable for a number of technical reasons, the most serious of which is that the analogous hypergraph in this case (in which the vertices are the sifted set $\QQ \cap S(\mathbf{\vec a})$ and the edges are sets of the form $\{ q \in \QQ \cap S(\mathbf{\vec a}): q \equiv n_p \pmod p \}$ for various $n_p,p$) does not have edges of constant cardinality.  However, by modifying the ``R\"odl nibble'' or ``semi-random'' method used to prove the Pippenger-Spencer theorem, we are able to obtain a generalization of that theorem in which the edges are permitted to have variable cardinality.  This generalization is purely combinatorial in nature and may be of independent interest beyond the application here to large prime gaps.

We will make a series of reductions to prove Theorem \ref{mainthm}.  To aid the
reader, we summarize the chain of implications below, indicating in which 
Section each implication or Theorem is proven (beneath), and in which Section 
one may find a statement of each Theorem (above).
\[
\overset{\S \ref{sec:weight}}
  {\underset{\S \ref{sec:sievemulti},\ref{sec:verification}}
    {\text{Thm \ref{weight}}}} \,  \underset{\S \ref{sec:weight}}{\implies}
\overset{\S \ref{sec:pip}}{\text{Thm \ref{sieve-primes-2}}} \,  \underset{\S \ref{sec:pip},\ref{pack-proof}}{\implies}
\overset{\S \ref{sec:initial}}{\text{Thm \ref{sieve-primes}}} \,  \underset{\S \ref{sec:initial}}{\implies}
\,\text{Thm \ref{mainthm}}
\]

\subsection{Acknowledgments}

The research of SK was partially performed while he was
visiting KF at the University of Illinois at 
Urbana--Champaign.  Research of KF and SK was
also carried out in part at the University of Chicago.
KF and SK are thankful to Prof. Wilhelm Schlag
for hosting these visits.  KF also thanks the
hospitality of the Institute of Mathematics and Informatics of the
Bulgarian Academy of Sciences.

Also, the research of the SK and TT was partially
performed while SK was 
visiting the Institute for Pure and Applied Mathematics (IPAM) at UCLA,
which is supported by the National Science Foundation.

The research of JM was conducted partly while  he was a CRM-ISM postdoctoral fellow at the Universit\'e de Montr\'eal, and partly while he was a Fellow by Examination at Magdalen College, Oxford.  He also thanks Andrew Granville and Daniel Fiorilli for many useful comments and suggestions.  TT also thanks Noga Alon and Van Vu for help with the references.

KF was supported by NSF grant DMS-1201442.
BG was supported by ERC Starting Grant 279438, \emph{Approximate algebraic structure}. 
TT was supported by a Simons Investigator grant, the
James and Carol Collins Chair, the Mathematical Analysis \&
Application Research Fund Endowment, and by NSF grant DMS-1266164. 

\section{Notational conventions}\label{not-sec}
  
In most of the paper, $x$ will denote an asymptotic parameter going to infinity, with many quantities allowed to depend on $x$.
The symbol $o(1)$ will stand for a quantity tending to zero as $x \to \infty$.
The same convention applies to the asymptotic
notation
$X \asym Y$, which means $X=(1+o(1))Y$, and $X \lesssim Y$, which means $X \leq (1+o(1)) Y$.  We use $X = O(Y)$, $X \ll Y$, and $Y \gg X$ to denote the claim that there is a constant $C>0$ such that $|X| \le CY$ throughout the domain of 
the quantity $X$.
We adopt the convention that $C$ is independent of any parameter
unless such dependence is indicated, e.g. by subscript such as  $\ll_k$. 
 In all of our estimates here, the constant $C$ will be effective (we will not rely on ineffective results such as Siegel's theorem).  If we can take the implied constant $C$ to equal $1$, we write $f = O_{\leq}(g)$ instead.  Thus for instance
$$ X = (1 + O_{\leq}(\eps)) Y$$
is synonymous with
$$ (1-\eps) Y \leq X \leq (1+\eps) Y.$$
Finally, we use $X \order Y$ synonymously with $X \ll Y \ll X$.

When summing or taking products over the symbol $p$, it is understood that $p$ is restricted to be prime.

Given a modulus $q$ and an integer $n$, we use $n \mod q$ to denote the congruence class of $n$ in $\Z/q\Z$.

Given a set $A$, we use $1_A$ to denote its indicator function, thus $1_A(x)$ is equal to $1$ when $x \in A$ and zero otherwise.  Similarly, if $E$ is an event or statement, we use $1_E$ to denote the indicator, equal to $1$ when $E$ is true and $0$ otherwise.  Thus for instance $1_A(x)$ is synonymous with $1_{x \in A}$.


We use $\# A$ to denote the cardinality of $A$, and
for any positive real $z$, we let $[z] := \{ n \in \mathbf{N}: 1 \leq
n \leq z \}$ denote the set of natural numbers up to $z$.

Our arguments will rely heavily on the probabilistic method.  Our random variables will mostly be discrete (in the sense that they take at most countably many values), although we will occasionally use some continuous random variables (e.g. independent real numbers sampled uniformly from the unit interval $[0,1]$).  As such, the usual measure-theoretic caveats such as ``absolutely integrable'', ``measurable'', or ``almost surely'' can be largely ignored by the reader in the discussion below.  We will use boldface symbols such as $\mathbf{X}$ or $\mathbf{a}$ to denote random variables (and non-boldface symbols such as $X$ or $a$ to denote deterministic counterparts of these variables).  Vector-valued random variables will be denoted in arrowed boldface, e.g. $\vec{\mathbf{a}} = (\mathbf{a}_s)_{s \in \cS}$ might denote a random tuple of random variables $\mathbf{a}_s$ indexed by some index set $\cS$.  

We write $\PR$ for probability, and $\E$ for expectation.   If $\mathbf{X}$ takes at most countably many values, we define the \emph{essential range} of $\mathbf{X}$ to be the set of all $X$ such that $\PR( \mathbf{X} = X )$ is non-zero, thus $\mathbf{X}$ almost surely takes values in its essential range.  We also employ the following conditional expectation notation.  If $E$ is an event of non-zero probability, we write
$$ \PR( F | E ) := \frac{\PR( F \wedge E )}{\PR(E)}$$
for any event $F$, and
$$ \E( \mathbf{X} | E ) := \frac{\E({\mathbf X} 1_E)}{\PR(E)}$$
for any (absolutely integrable) real-valued random variable ${\mathbf X}$.  If $\mathbf{Y}$ is another random variable taking at most countably many values, we define the conditional probability $\PR(F|\mathbf{Y})$ to be the random variable that equals $\PR(F|\mathbf{Y}=Y)$ on the event $\mathbf{Y}=Y$ for each $Y$ in the essential range of $\mathbf{Y}$, and similarly define the conditional expectation $\E( \mathbf{X} | \mathbf{Y} )$ to be the random variable that equals $\E( \mathbf{X} | \mathbf{Y} = Y)$ on the event $\mathbf{Y}=Y$.  We observe the idempotency property
\begin{equation}\label{idem}
\E ( \E({\mathbf X}|{\mathbf Y}) ) = \E \mathbf{X}
\end{equation}
whenever ${\mathbf X}$ is absolutely integrable and $\mathbf{Y}$ takes at most countably many values.

We will rely frequently on the following simple concentration of measure result.

\begin{lem}[Chebyshev inequality]\label{cheb}  Let ${\mathbf X}, {\mathbf Y}$ be independent random variables taking at most countably many values.  Let ${\mathbf Y}'$ be a conditionally independent copy of ${\mathbf Y}$ over ${\mathbf X}$; in other words, for every $X$ in the essential range of $\mathbf{X}$, the random variables ${\mathbf Y}, {\mathbf Y}'$ are independent and identically distributed after conditioning to the event $\mathbf{X}=X$.  Let $F( {\mathbf X}, {\mathbf Y})$ be a (absolutely integrable) random variable depending on ${\mathbf X}$ and ${\mathbf Y}$.  Suppose that one has the bounds
\begin{equation}\label{1-moment}
\E F( {\mathbf X}, {\mathbf Y} ) = \alpha + O(\eps \alpha)
\end{equation}
and
\begin{equation}\label{2-moment}
\E F( {\mathbf X}, {\mathbf Y} ) F( {\mathbf X}, {\mathbf Y}' )  = \alpha^2 + O(\eps \alpha^2)
\end{equation}
for some $\alpha, \eps > 0$ with $\eps = O(1)$.  Then for any $\theta > 0$, one has
\begin{equation}\label{conclusion}
\E( F( {\mathbf X}, {\mathbf Y} ) | {\mathbf X} ) = \alpha + O_{\leq}(\theta)
\end{equation}
with probability $1 - O( \frac{\eps\alpha^2}{\theta^2})$.
\end{lem}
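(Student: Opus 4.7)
The plan is to recognize this as a standard second-moment argument applied to the conditional expectation $g(\mathbf{X}) := \E(F(\mathbf{X},\mathbf{Y})|\mathbf{X})$, which is the quantity we want to concentrate near $\alpha$. The two hypotheses should be read as controlling the first and second moments of $g(\mathbf{X})$, and the auxiliary variable $\mathbf{Y}'$ is a standard device for converting a variance of a conditional expectation into an unconditional second moment that can be evaluated by the hypothesis.

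First I would use the idempotency relation \eqref{idem} to write $\E g(\mathbf{X}) = \E F(\mathbf{X},\mathbf{Y}) = \alpha + O(\eps\alpha)$. Next, since $\mathbf{Y}$ and $\mathbf{Y}'$ are conditionally independent and identically distributed given $\mathbf{X}$, the key identity
\[
\E(F(\mathbf{X},\mathbf{Y}) F(\mathbf{X},\mathbf{Y}') \mid \mathbf{X}) = \E(F(\mathbf{X},\mathbf{Y})\mid\mathbf{X}) \cdot \E(F(\mathbf{X},\mathbf{Y}')\mid\mathbf{X}) = g(\mathbf{X})^2
\]
holds; taking expectations and applying \eqref{2-moment} yields $\E g(\mathbf{X})^2 = \alpha^2 + O(\eps\alpha^2)$. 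Combining these two computations gives
\[
\Var g(\mathbf{X}) = \alpha^2 + O(\eps\alpha^2) - (\alpha+O(\eps\alpha))^2 = O(\eps\alpha^2),
\]
where the cross term $O(\eps^2\alpha^2)$ is absorbed using the hypothesis $\eps=O(1)$.

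Having obtained this variance bound, I would simply apply the classical Chebyshev inequality to the random variable $g(\mathbf{X})$, concluding that
\[
\PR\bigl( |g(\mathbf{X}) - \E g(\mathbf{X})| \geq \theta/2 \bigr) \ll \eps\alpha^2/\theta^2.
\]
To finish, note that $|\E g(\mathbf{X}) - \alpha| \ll \eps\alpha$, so the triangle inequality bounds $|g(\mathbf{X})-\alpha|$ by $\theta/2 + O(\eps\alpha)$ off the bad event. The conclusion \eqref{conclusion} is non-vacuous only when $\eps\alpha^2/\theta^2$ is smaller than an absolute constant; in that regime, $\sqrt{\eps}\,\alpha \ll \theta$, and combined with $\eps = O(1)$ this forces $\eps\alpha \ll \theta$, so the additive error is absorbed into $O_{\leq}(\theta)$ after adjusting the implicit constants. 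Outside of that regime the probability estimate is trivially satisfied.

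The argument has no real obstacle: the only subtlety is the bookkeeping in the last step, namely verifying that the first-moment error $O(\eps\alpha)$ can be folded into $O_{\leq}(\theta)$ precisely where Chebyshev's bound is useful. All of this is routine once $\mathbf{Y}'$ is introduced, and indeed the usefulness of the lemma in later sections is that it isolates exactly the two moment computations one has to verify for each application.
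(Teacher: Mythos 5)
Your proposal is correct and follows essentially the same route as the paper: both rest on the identity $\E(F(\mathbf{X},\mathbf{Y})F(\mathbf{X},\mathbf{Y}')\mid\mathbf{X}) = \E(F(\mathbf{X},\mathbf{Y})\mid\mathbf{X})^2$, the idempotency relation \eqref{idem} to import the two moment hypotheses, and a second-moment/Chebyshev bound. The only (cosmetic) difference is that the paper bounds $\E|\mathbf{Z}-\alpha|^2 \ll \eps\alpha^2$ directly and applies Markov to that, which sidesteps your triangle-inequality step and the case analysis needed to absorb the $O(\eps\alpha)$ mean shift into $O_{\leq}(\theta)$; your handling of that absorption is nonetheless valid.
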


In practice, we will often establish \eqref{1-moment} and \eqref{2-moment} by first computing the conditional expectations
$$ \E(F( {\mathbf X}, {\mathbf Y} )|{\mathbf Y})$$
and
$$ \E(F( {\mathbf X}, {\mathbf Y} ) F( {\mathbf X}, {\mathbf Y}' )|{\mathbf Y}, {\mathbf Y}')$$
and then using \eqref{idem}.  Thus we see that we can control the ${\mathbf X}$-conditional expectation of $F( {\mathbf X}, {\mathbf Y} )$ via the ${\mathbf Y}$-conditional expectation, provided that we can similarly control the ${\mathbf Y}, {\mathbf Y}'$-conditional expectation of $F( {\mathbf X}, {\mathbf Y} ) F( {\mathbf X}, {\mathbf Y}' )$.

\begin{proof}  Let ${\mathbf Z}$ denote the random variable
$$ {\mathbf Z} := \E( F( {\mathbf X}, {\mathbf Y} ) | {\mathbf X} )$$
then by the conditional independence and identical distribution of ${\mathbf Y}, {\mathbf Y}'$ over ${\mathbf X}$ we have
$$ {\mathbf Z}^2 = \E( F( {\mathbf X}, {\mathbf Y} ) F( {\mathbf X}, {\mathbf Y}' ) | {\mathbf X} ).$$

From \eqref{1-moment} and \eqref{idem} we have
$$ \E {\mathbf Z} = \alpha + O(\eps \alpha)$$
while from \eqref{2-moment}, \eqref{idem} we have
$$ \E {\mathbf Z}^2 = \alpha^2 + O(\eps \alpha^2)$$
and thus
$$ \E |{\mathbf Z}-\alpha|^2 \ll \eps \alpha^2.$$
The claim now follows from Markov's inequality (or the Chebyshev inequality).
\end{proof}

\section{Sieving a set of primes}\label{sec:initial}

 We begin by using a variant of the
Westzynthius-Erd\H{o}s-Rankin method to reduce this problem to a problem of sieving a set $\QQ$ of \emph{primes} in $[y] \backslash [x]$,
rather than integers in $[y] \backslash [x]$. 

Given a large real number $x$, define
\be\label{ydef}
y := c x \frac{\log x \log_3 x}{\log_2 x},
\ee
where $c$ is a certain (small) fixed positive constant.  Also let
\begin{equation}\label{zdef}
 z :=x^{\log_3 x/(4\log_2 x)},
\end{equation}
and introduce the three disjoint sets of primes
\begin{align}
\cS &:= \{ s\; \mbox{prime} : \log^{20} x < s \le z \},\label{s-def}\\
\PP &:= \{ p \; \mbox{prime}: x/2 < p \leq x\},\label{p-def}\\
\QQ &:= \{ q \;\mbox{prime}: x < q \leq y\}\label{q-def}.
\end{align}
For residue classes $\vec a = (a_s \mod s)_{s\in \cS}$ and
$\vec b = (b_p \mod p)_{p\in \PP}$, define the sifted sets
$$ S(\vec a) := \{ n \in \Z: n \not\equiv a_s \pmod s \hbox{ for all } s \in \cS \}$$
and likewise
$$ S(\vec b) := \{ n \in \Z: n \not\equiv b_p \pmod p \hbox{ for all } p \in \PP \}.$$

We then have

\begin{thm}[Sieving primes]\label{sieve-primes}  Let $x$ be sufficiently
  large and suppose that $y$ obeys  \eqref{ydef}.
Then there are vectors $\vec{a}=(a_s \mod s)_{s \in \cS}$
and $\vec{b}=(b_p \mod p)_{p \in \PP}$, such that
\begin{equation}\label{up-short-random}
 \#(\QQ \cap S( \vec{ a} ) \cap S( \vec{ b} ) ) 
\ll \frac{x}{\log x}.
\end{equation}
\end{thm}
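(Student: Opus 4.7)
The plan is to choose $\vec{a}$ randomly, verify that the resulting sifted set $\QQ \cap S(\vec{\mathbf{a}})$ has the structural properties required to invoke Theorem \ref{sieve-primes-2} (the Pippenger--Spencer/R\"odl-nibble engine promised in the outline), and then deduce the existence of a suitable $\vec{b}$ by an averaging/positive-probability argument.

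More concretely, I would pick $\mathbf{a}_s$ independently and uniformly in $\Z/s\Z$ for each $s \in \cS$. For a fixed $q \in \QQ$, independence gives $\PR(q \in S(\vec{\mathbf{a}})) = \prod_{s \in \cS}(1 - 1/s)$, which by Mertens' theorem is $\order \log(\log^{20} x)/\log z = 80(\log_2 x)^2/(\log x \cdot \log_3 x)$. Combined with $|\QQ| \sim y/\log y \order cx \log_3 x/\log_2 x$, this yields $\E|\QQ \cap S(\vec{\mathbf{a}})| \order c x \log_2 x/\log x$, matching the heuristic from the introduction. A second-moment estimate, handled by Lemma \ref{cheb} applied to $\sum_{q\in\QQ} 1_{q\in S(\vec{\mathbf{a}})}$ and to bilinear variants tracking pairs $(q,q')$ with $q \equiv q' \pmod{p}$ for each $p \in \PP$, shows with high probability that the realised cardinality is close to its mean and that, for every $p\in\PP$, the sifted set is suitably equidistributed across residue classes mod $p$.

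Conditional on these good events for $\vec{\mathbf{a}}$, I would invoke Theorem \ref{sieve-primes-2}, whose hypotheses I expect to be exactly such a density-plus-equidistribution condition on a subset of $\QQ$ of size $\lesssim x\log_2 x/\log x$, and whose conclusion deterministically produces $\vec{b}$ with $\#(\QQ \cap S(\vec{\mathbf{a}}) \cap S(\vec{b})) \ll x/\log x$. Since the favourable probabilistic events for $\vec{\mathbf{a}}$ occur with positive probability, a specific deterministic $\vec{a}$ exists for which the hypotheses of Theorem \ref{sieve-primes-2} apply; pairing it with the $\vec{b}$ that theorem furnishes yields \eqref{up-short-random}. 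The internal construction inside Theorem \ref{sieve-primes-2} — a Maynard-type multidimensional sieve that, for each $p\in\PP$, locates a residue class $b_p$ containing $\gg\log_2 x$ primes of the input, together with a variable-cardinality extension of Pippenger--Spencer ensuring near-disjointness of the resulting edges — is not reproved here, only invoked.

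The main technical obstacle is quantitative: the equidistribution of $\QQ \cap S(\vec{\mathbf{a}})$ in arithmetic progressions mod $p$ must hold simultaneously for \emph{every} $p \in \PP$, a set of size $\order x/\log x$ comparable to the ultimate sifting target. Consequently the per-$p$ failure probability from Lemma \ref{cheb} must be quantitatively better than $(\log x)/x$, which forces a careful choice of the tolerance $\theta$ together with a delicate second-moment estimate for pairs $(q,q')$ with $q \equiv q' \pmod p$ both surviving the $\cS$-sieve. Once this concentration is in place and the union bound over $\PP$ is carried out, the remainder — averaging over $\vec{\mathbf{a}}$, invoking Theorem \ref{sieve-primes-2}, and unwinding definitions — is essentially formal.
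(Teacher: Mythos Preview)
Your high-level picture --- choose $\vec a$ randomly, then invoke a black-box engine to obtain $\vec b$ --- matches the paper's, but you have misidentified what Theorem~\ref{sieve-primes-2} actually asserts, and this leads you to insert a step that is both unnecessary and, as stated, unworkable.

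In the paper, Theorem~\ref{sieve-primes-2} has \emph{no} hypotheses about a pre-given sifted subset of $\QQ$. It constructs \emph{both} the random $\vec{\mathbf a}$ (exactly the uniform choice you describe) \emph{and} random integers $\mathbf n_p$ for $p\in\PP$, and its output is not $\vec b$ but rather the ``uniform covering'' property \eqref{good}: for almost every $q\in\QQ\cap S(\vec a)$, one has $\sum_{p\in\PP}\PR(q\in\mathbf e_p(\vec a)\mid\vec{\mathbf a}=\vec a)=C+O((\log_2 x)^{-2})$. The passage from this to an actual $\vec b$ is done \emph{outside} Theorem~\ref{sieve-primes-2}, via the hypergraph-covering Corollary~\ref{packing-quant-cor}; your proposal has bundled that corollary into your imagined version of Theorem~\ref{sieve-primes-2}. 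Consequently, your preliminary first- and second-moment computations for $\#(\QQ\cap S(\vec{\mathbf a}))$ are redundant --- they are already carried out inside the proof of Theorem~\ref{sieve-primes-2} (see Corollary~\ref{s0}) --- while the genuine deduction step (verifying the hypotheses of Corollary~\ref{packing-quant-cor}, especially the small-codegree condition \eqref{small-codegree-cor}, and applying it) is missing from your outline.

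More seriously, the ``main technical obstacle'' you identify is spurious. You propose establishing, via a union bound over $p\in\PP$, that $\QQ\cap S(\vec{\mathbf a})$ is equidistributed in residue classes mod $p$. But $\#(\QQ\cap S(\vec{\mathbf a}))\asymp x\log_2 x/\log x$ while $p\asymp x$, so the expected count per residue class is $\asymp \log_2 x/\log x \ll 1$; there is no equidistribution to speak of, and certainly no second-moment bound strong enough to survive a union bound over $\asymp x/\log x$ primes. The paper never needs such a statement. What replaces it is the Maynard sieve weight (Theorem~\ref{weight}), which is used to \emph{construct} the $\mathbf n_p$ so that the uniform-covering condition \eqref{good} holds; this is a statement indexed by $q$, not by $p$, and the exceptional set of $q$'s (rather than of $p$'s) is controlled by a second-moment argument (Lemma~\ref{smc-2}).
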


We prove Theorem \ref{sieve-primes} in subsequent sections. 
Here, we show how this theorem implies \eqref{Y-lower}, and hence Theorem \ref{mainthm}.

Let $\vec a$ and $\vec b$ be as in Theorem \ref{sieve-primes}.
We extend the tuple $\vec a$ to a tuple $(a_p)_{p \leq x}$ of congruence classes $a_p \mod p$ for all primes $p \leq x$ by setting $a_p := b_p$ for $p \in \PP$ and $a_p := 0$ for $p \not \in \cS \cup \PP$, and consider the sifted set
$$ {\mathcal T} := \{ n \in [y]\backslash [x] : n \not\equiv a_p \pmod p \hbox{ for all } p \leq x \}.$$
The elements of ${\mathcal T}$, by construction, are not divisible by any prime in $(0,\log^{20} x]$ or in $(z,x/2]$.
 Thus, each element must either be a $z$-smooth number (i.e., a number with all prime factors at most $z$), or must consist of a prime greater than $x/2$, possibly multiplied by some additional primes that are all at least $\log^{20} x$.  However, from \eqref{ydef} we know that $y=o(x\log x)$.
Thus, we see that an element of ${\mathcal T}$ is either a $z$-smooth number or a prime in $\QQ$.  In the second case, the element lies in $\QQ  \cap S(\vec a) \cap S(\vec b)$.  Conversely, every element of $\QQ \cap S(\vec a) \cap S(\vec b)$ lies in ${\mathcal T}$.  Thus, ${\mathcal T}$ only differs from $\QQ  \cap S(\vec a) \cap S(\vec b)$ by a set $\cR$ consisting of $z$-smooth numbers in $[y]$.

To estimate $\# \cR$, let
$$ u := \frac{\log y}{\log z},$$
so from \eqref{ydef}, \eqref{zdef} one has $u \asym 4 \frac{\log_2 x}{\log_3 x}$.  
By standard counts for smooth numbers (e.g. de Bruijn's theorem \cite{deB}) and \eqref{ydef}, we thus have
\[
\# \cR  \ll  y e^{-u\log u + O( u \log\log(u+2) ) } \\
= \frac{y}{\log^{4+o(1)} x} 
= o\( \frac{x}{\log x}\).
\]
Thus, we find that $\# \mathcal T \ll x/\log x$.  

Next, let $C$ be a sufficiently large constant such that
$\# \mathcal T$ is less than the number of primes in $(x,Cx]$.  By matching
each of these surviving elements to a distinct prime in $(x,Cx]$ and choosing congruence classes appropriately, we thus find congruence classes $a_p \mod p$ 
for $p\le Cx$ which cover \emph{all} of the integers in $(x,y]$.  In the language of Definition  \ref{y-def}, we thus have
$$ Y(Cx) \geq y-x+1,$$
and \eqref{Y-lower} follows from \eqref{ydef}.

\begin{remark} One can replace the appeal to de Bruijn's theorem here by the simpler bounds of Rankin \cite[Lemma II]{R1}, if one makes the very minor change of increasing the $4$ in the denominator of \eqref{zdef} to $5$, and also makes similar numerical changes to later parts of the argument.
\end{remark}

It remains to establish Theorem \ref{sieve-primes}.  This is the objective of the remaining sections of the paper.

\section{Using a hypergraph covering theorem}\label{sec:pip}

In the previous section we reduced matters to obtaining residue classes $\vec{a}$, $\vec{b}$ such that the sifted set $\QQ \cap S(\vec{a}) \cap S(\vec{ b})$ is small.  In this section we use a hypergraph covering theorem, generalizing a result of Pippenger and Spencer \cite{pippenger}, to reduce the task to that of finding residue classes $\vec{b}$ that have large intersection with $\QQ \cap S(\vec{a})$.

\subsection{Heuristic discussion}

Consider the following general combinatorial problem.  Let $(V,E_i)_{i \in I}$ be a collection of (non-empty) hypergraphs on a fixed finite vertex set $V$ indexed by some finite index set $I$.  In other words, $V$ and $I$ are finite sets, and for each $i \in I$, $E_i$ is a (non-empty) collection of subsets of $V$.  The problem is then to select a single edge $e_i$ from each set $E_i$ in such a way that the union $\bigcup_{i \in I} e_i$ covers as much of the vertex set $V$ as possible.  (In the context considered in \cite{pippenger}, one considers choosing many edges from a single hypergraph $(V,E)$, which in our context would correspond to the special case when $(V,E_i)$ was independent of $i$.)  One should think of the set $V \backslash \bigcup_{i \in I} e_i$ as a sifted version of $V$, with each $e_i$ representing one step of the sieve.

One simple way to make this selection is a random one: one chooses a random edge $\mathbf{e}_i$ uniformly at random from $E_i$, independently in $i$.  In that case, the probability that a given vertex $v \in V$ survives the sifting (that is, it avoids the random union $\bigcup_{i \in I} \mathbf{e}_i$) is equal to
$$ \prod_{i \in I} (1 - \PR( v \in \mathbf{e}_i )).$$
In applications, the index set $I$ is large and the probabilities $\PR( v \in \mathbf{e}_i )$ are small, in which case the above expression may be well approximated by
$$ \exp( - d_I(v) )$$
where we define the \emph{normalized degree} $d_I(v)$ of $v$ to be the quantity
$$ d_I(v) := \sum_{i \in I} \PR( v \in \mathbf{e}_i ).$$
If we make the informal uniformity assumption 
\begin{itemize}
\item[(i)] One has $d_I(v) \approx d$ for all (or almost all) vertices $v$,
\end{itemize}
we thus expect the sifted set $V \backslash  \bigcup_{i \in I} \mathbf{e}_i$ to have density approximately $\exp(-d)$.

Can one do better than this?  Choosing the $\mathbf{e}_i$ independently is somewhat inefficient because it allows different random edges $\mathbf{e}_i, \mathbf{e}_j$ to collide with each other.  If we could somehow modify the coupling between the $\mathbf{e}_i$ so that they were always disjoint, then the probability that a given vertex $v \in V$ survives the sieve would now become
$$ 1 - \sum_{i \in I} \PR( v \in \mathbf{e}_i ) = 1 - d_I(v).$$
This suggests that one could in principle lower the density of the sifted set from $\exp(-d)$ to $1-d$ (or $\max(1-d,0)$, since the density clearly cannot be negative), and in particular to sift out $V$ almost completely as soon as $d$ exceeds $1$.   

Suppose for the moment that such an optimal level of sieve efficiency is possible, and return briefly to consideration of Theorem \ref{sieve-primes}.  We set the vertex set $V$ equal to $\QQ \cap S(\vec a)$ for some suitable choice of $\vec a$. If we set
$$ y := c x \frac{\log x \log_3 x}{\log_2 x}$$
for some small $c>0$ (in accordance with \eqref{ydef}), then standard probabilistic heuristics (together with Mertens' theorem and \eqref{ydef}, \eqref{s-def}) suggest that $V$ should have cardinality about
$$ \frac{y}{\log x} \times \prod_{s \in \cS} \(1-\frac{1}{s}\) \approx c \frac{x}{\log x} \log_2 x,$$
so in particular this set is roughly $c \log_2 x$ times larger than $\PP$.  In later sections, we will use the multidimensional 
sieve from \cite{maynard-large}, \cite{maynard-dense} to locate for most primes $p$ in $\PP$, a large number of residue classes $b_p \mod p$ that each intersect $\QQ \cap S(\vec a)$ in roughly $\order \log_2 x$ elements on the average.  If we let $E_p$ be the set of all such intersections $(b_p \mod p) \cap V$, then the task of making $\QQ \cap S(\vec a) \cap S(\vec b)$ small is essentially the same as making the sifted set $V \backslash \bigcup_{p \in \PP} e_p$ small, for some suitable edges $e_p$ drawn from $E_p$.  By double counting, the expected density $d$ here should be roughly
$$ d \order \frac{\# \PP \times \log_2 x}{\# V} \order \frac{1}{c},$$
and so one should be able to sieve out $\QQ \cap S(\vec a)$ more or less completely once $c$ is small enough if one had optimal sieving.  In contrast, if one used independent sieving, one would expect the cardinality of $\QQ \cap S(\vec a) \cap S(\vec b)$ to be something like $\exp( - 1/c ) \times c \frac{x}{\log x} \log_2 x$, which would only be acceptable if $c$ was slightly smaller than $\frac{1}{\log_3 x}$.  This loss of $\log_3 x$ ultimately leads to the loss of $\log_4 X$ in \eqref{G1-bound} as compared against Theorem \ref{mainthm}.

It is thus desirable to obtain a general combinatorial tool for achieving near-optimal sieve efficiency for various collections $(V,E_i)_{i \in I}$ of hypergraphs.
The result of Pippenger and Spencer \cite{pippenger} (extending previous results of R\"odl \cite{nibble} and Frankl and R\"odl \cite{rodl}, as well as unpublished work of Pippenger) asserts, very roughly speaking, that one can almost attain this optimal efficiency under some further assumptions beyond (i), which we state informally as follows:
\begin{itemize}
\item[(ii)] The hypergraphs $(V,E_i)$ do not depend on $i$.
\item[(iii)]  The \emph{normalized codegrees} $\sum_{i \in I} \PR( v, w \in \mathbf{e}_i )$ for $v \neq w$ are small.
\item[(iv)] The edges $e_i$ of $E_i$ are of \emph{constant} size, thus there is a $k$ such that $\# e_i = k$ for all $i$ and all $e_i \in E_i$.
\end{itemize}
The argument is based on the \emph{R\"odl nibble} from \cite{nibble}, which is a variant of the \emph{semi-random method} from \cite{aks}.  Roughly speaking, the idea is to break up the index set $I$ into smaller pieces $I_1,\dots,I_m$.  For the first $I_1$, we perform a ``nibble'' by selecting the $\mathbf{e}_i$ for $i \in I_1$ uniformly and independently.  For the next nibble at $I_2$, we restrict (or condition) the $\mathbf{e}_i$ for $i \in I_2$ to avoid the edges arising in the first nibble, and \emph{then} select $\mathbf{e}_i$ for $i \in I_2$ independently at random using this conditioned distribution.  We continue performing nibbles at $I_3,\dots,I_m$ (restricting the edges at each nibble to be disjoint from the edges of previous nibbles) until the index set $I$ is exhausted.  Intuitively, this procedure enjoys better disjointness properties than the completely independent selection scheme, but it is harder to analyze the probability of success.  To achieve the latter task, Pippenger and Spencer rely heavily on the four hypotheses (i)-(iv).

In our context, hypothesis (iii) is easily satisfied, and (i) can also be established.  Hypothesis (ii) is not satisfied (the $E_p$ vary in $p$), but it turns out that the argument of Pippenger and Spencer can easily be written in such a way that this hypothesis may be discarded.  
But it is the failure of hypothesis (iv) which is the most severe difficulty: the size of the sets $e_p = (b_p \mod p) \cap V$ can fluctuate quite widely for different choices of $p$ or $b_p$.  This creates an undesirable bias in the iterative nibbling process: with each nibble, larger edges $\mathbf{e}_i$ have a reduced chance of survival compared with smaller edges, simply because they have more elements that could potentially intersect previous nibbles.  Given that one expects the larger edges to be the most useful for the purposes of efficient sieving, this bias is a significant problem.  One could try to rectify the issue by partitioning the edge sets $E_i$ depending on the cardinality of the edges, and working on one partition at a time, but this seriously impacts hypothesis (i) in a manner that we were not able to handle.

Our resolution to this problem is to modify the iterative step of the nibbling process by \emph{reweighting} the probability distribution of the $\mathbf{e}_i$ at each step to cancel out the bias incurred by conditioning an edge $\mathbf{e}_i$ to be disjoint from previous nibbles.  It turns out that there is a natural choice of reweighting for this task even when the normalized degrees $d_I(v)$ vary in $v$.  As a consequence, we can obtain a version of the Pippenger-Spencer theorem in which hypothesis (ii) is essentially eliminated and (i), (iv) significantly weakened, leaving only (iii) as the main hypothesis.  We remark that a somewhat similar relaxation of hypotheses (i)-(iv) was obtained by Kahn in \cite{kahn}, although the statement in \cite{kahn} is not exactly in a form convenient for our applications here.

\subsection{Statement of covering theorem}

We now rigorously state the hypergraph covering theorem that we will use.  In order to apply this theorem for our application, we will need a probabilistic formulation of this theorem which does not, at first glance, bear much resemblance to the combinatorial formulation appearing in \cite{pippenger}; we will discuss the connections between these formulations shortly.  We will also phrase the theorem in a completely quantitative fashion, avoiding the use of asymptotic notation; this will be convenient for the purposes of proving the theorem via induction (on the number $m$ of ``nibbles'').

\begin{thm}[Probabilistic covering]\label{packing-quant}  There exists
  a constant $C_0 \geq 1$ such that the following holds.  Let $D, r, A
  \geq 1$, $0 < \kappa \leq 1/2$, and let $m \geq 0$ be an integer.
  Let $\delta > 0$ satisfy the smallness bound
\begin{equation}\label{delta-small}
\delta \leq \left( \frac{\kappa^A}{C_0 \exp(AD)} \right)^{10^{m+2}}.
\end{equation}
Let $I_1,\dots,I_m$ be disjoint finite non-empty sets, and let $V$ be a finite set.  For each $1 \leq j \leq m$ and $i \in I_j$, let $\mathbf{e}_i$ be a random finite subset of $V$.  Assume the following:
\begin{itemize}
\item (Edges not too large) Almost surely for all $j=1,\dots,m$ and $i \in I_j$, we have
\begin{equation}\label{p-bound-quant}
 \# \mathbf{e}_i \leq r;
\end{equation}
\item (Each sieve step is sparse) For all $j=1,\dots,m$, $i \in I_j$ and $v \in V$,
\begin{equation}\label{q-form-quant}
\PR( v \in \mathbf{e}_i ) \leq \frac{\delta}{(\# I_j)^{1/2}};
\end{equation}
\item (Very small codegrees) For every $j=1,\dots,m$, and distinct $v_1,v_2 \in V$,
\begin{equation}\label{q-form-2-quant}
\sum_{i \in I_j} \PR( v_1,v_2 \in \mathbf{e}_i ) \leq \delta
\end{equation}
\item (Degree bound) If for every $v \in V$ and $j =1,\dots,m$ we introduce the normalized degrees
\begin{equation}\label{epsqj-quant}
 d_{I_j}(v) := \sum_{i \in I_j} \PR( v \in \mathbf{e}_i )
\end{equation}
and then recursively define the quantities $P_j(v)$ for $j=0,\dots,m$ and $v \in V$ by setting
\begin{equation}\label{p0-def}
P_0(v) := 1 
\end{equation}
and
\begin{equation}\label{pj-def}
P_{j+1}(v) := P_j(v) \exp( - d_{I_{j+1}}(v) / P_j(v) )
\end{equation}
for $j=0,\dots,m-1$ and $v \in V$, then we have
\begin{equation}\label{epsbound-quant}
d_{I_j}(v) \leq D P_{j-1}(v) \qquad (1 \le j\le m, v\in V)
\end{equation}
 and
\begin{equation}\label{pje-size-bite}
P_j(v) \geq \kappa \qquad (0 \le j\le m, v\in V).
\end{equation}
\end{itemize}
Then we can find random variables $\mathbf{e}'_i$ for each $i \in \bigcup_{j=1}^m I_j$ with the following properties:
\begin{itemize}
\item[(a)]  For each $i \in \bigcup_{j=1}^m I_j$, the essential support of $\mathbf{e}'_i$ is contained in the essential support of $\mathbf{e}_i$, union the empty set singleton $\{\emptyset\}$.  In other words, almost surely $\mathbf{e}'_i$ is either empty, or is a set that $\mathbf{e}_i$ also attains with positive probability.
\item[(b)]  For any $0 \leq J \leq m$ and any finite subset $e$ of $V$ with $\# e \leq A - 2rJ$, one has
\begin{equation}\label{penj-quant}
 \PR\left( e \subset V \backslash \bigcup_{j=1}^J \bigcup_{i \in I_j} \mathbf{e}'_i \right)
 = \(1 + O_{\leq}( \delta^{1/10^{J+1}} )\) P_J(e) 
\end{equation}
where
\begin{equation}\label{pje-def}
P_j(e) := \prod_{v \in e} P_j(v).
\end{equation}

\end{itemize}
\end{thm}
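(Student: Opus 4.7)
I would proceed by induction on $m$, the number of nibbles, carrying out the first nibble explicitly and reducing the remaining $m-1$ nibbles to the inductive hypothesis. The base case $m = 0$ is trivial: $P_0(e) = 1$ and the event $e \subset V$ holds with probability one, so \eqref{penj-quant} is an equality with zero error.

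For the first nibble, take $\mathbf{e}'_i := \mathbf{e}_i$ for each $i \in I_1$; no reweighting is needed since $P_0 \equiv 1$. Set $W := V \setminus \bigcup_{i \in I_1} \mathbf{e}'_i$. Then by independence across $i \in I_1$,
\[
\PR(e \subset W) = \prod_{i \in I_1}\bigl(1 - \PR(\mathbf{e}_i \cap e \neq \emptyset)\bigr).
\]
The sparsity hypothesis \eqref{q-form-quant} legitimizes the Poisson approximation $\prod(1-x_i) = \exp(-\sum x_i + O(\sum x_i^2))$, while the codegree bound \eqref{q-form-2-quant} allows replacement of $\PR(\mathbf{e}_i \cap e \neq \emptyset)$ by $\sum_{v \in e} \PR(v \in \mathbf{e}_i)$ with an additive error of size $|e|^2\delta$. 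Combining with the degree bound \eqref{epsbound-quant} and the smallness \eqref{delta-small}, this yields the $J = 1$ case $\PR(e \subset W) = (1 + O_{\leq}(\delta^{1/100})) P_1(e)$.

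For the remaining nibbles, condition on $W$ and \emph{reweight}: for each $i \in I_j$ with $j \geq 2$, define a new random edge $\mathbf{e}_i^{(1)}$ whose conditional distribution given $W$ takes the form
\[
\PR\bigl(\mathbf{e}_i^{(1)} = e \mid W\bigr) = \PR(\mathbf{e}_i = e) \cdot \frac{\mathbf{1}[e \subset W]}{P_1(e)}
\]
for non-empty $e$ in the essential support of $\mathbf{e}_i$, with the residual probability absorbed into $\mathbf{e}_i^{(1)} = \emptyset$. The factor $1/P_1(e)$ is precisely what cancels the bias incurred by conditioning an edge to be disjoint from the first nibble: a direct moment computation shows
\[
\sum_{i \in I_j} \PR\bigl(v \in \mathbf{e}_i^{(1)} \mid W\bigr) = \bigl(1 + O_{\leq}(\delta^{1/100})\bigr)\, \frac{d_{I_j}(v)}{P_1(v)} \qquad (v \in W)
\]
simultaneously for all $v \in W$, with the uniform control furnished by Lemma \ref{cheb} applied to the indicators $\mathbf{1}[v \in W]$ (the first and second moments of these indicators are governed by \eqref{q-form-quant} and \eqref{q-form-2-quant}). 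Introducing the shifted quantities $P^{(1)}_{j-1}(v) := P_j(v)/P_1(v)$, one checks that the recursion \eqref{pj-def} for the original $(d_{I_j}, P_j)$ becomes precisely the recursion for the shifted $(d_{I_j}/P_1, P^{(1)}_{j-1})$, and that the hypotheses \eqref{p-bound-quant}--\eqref{pje-size-bite} hold for $(\mathbf{e}_i^{(1)})_{i \in I_2 \cup \dots \cup I_m}$ on the vertex set $W$ with shifted parameters $(m-1, D, r, A - 2r)$ and slightly degraded $\kappa, \delta$. Applying the inductive hypothesis produces $\mathbf{e}'_i$ for $i \in I_2 \cup \dots \cup I_m$ satisfying the $m-1$-step conclusion on $W$, and combining with the first-nibble estimate via the telescoping identity $P_J(e) = P_1(e) \cdot P^{(1)}_{J-1}(e)$ yields \eqref{penj-quant} for all $1 \leq J \leq m$.

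The main technical obstacle is making the reweighting rigorous. The formal definition above is valid only when the total mass $\sum_e \PR(\mathbf{e}_i = e)\,\mathbf{1}[e \subset W]/P_1(e)$ does not exceed one; this holds in expectation (since $\mathbf{E}\,\PR(e \subset W) \asym P_1(e)$), but may fail on atypical events of $W$. Handling these exceptional events, either by a truncation of the reweighting factor (controlled via $1/P_1(e) \leq \kappa^{-r}$ from \eqref{pje-size-bite}) or by a more delicate coupling, and propagating the resulting error through $m$ nested inductive steps without collapse, is where the doubly-exponential smallness of $\delta$ in \eqref{delta-small} — with exponent $10^{m+2}$ — becomes essential: at each inductive stage the error worsens by a fixed polynomial factor, and the exponent is chosen precisely to absorb the compounding.
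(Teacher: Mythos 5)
Your overall strategy---induction on the number of nibbles, with a reweighting by $1/P_j(\tilde e)$ to cancel the bias created by conditioning an edge to avoid earlier nibbles---is the paper's key idea, and your first-nibble computation is fine. But you run the induction in the opposite direction from the paper, and this creates a genuine gap. The paper peels off the \emph{last} nibble: it invokes the inductive hypothesis to produce $\mathbf{e}'_i$ for $i\in I_1\cup\dots\cup I_{m-1}$ on the original vertex set $V$, takes from it only the correlation estimate \eqref{penj-quant} for sets $e$ with $\#e\le A-2r(m-1)$, and then analyzes the $I_m$-nibble directly; every concentration statement it needs (the concentration of the normalizing mass $X_i(\mathbf{W})$ in Lemma \ref{xconc-quant}, and the degree identity \eqref{Pvei}) is only ever required at the $O(A)$ vertices of one fixed small set $e$, conditionally on $e\subset\mathbf{W}$, so a union bound over at most $A$ vertices suffices. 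You instead peel off the \emph{first} nibble and want to apply the theorem, conditionally on $W=W_0$, to the reweighted edges $\mathbf{e}_i^{(1)}$ on the vertex set $W_0$. For that invocation to be legitimate you must verify the hypotheses \eqref{epsbound-quant} and \eqref{pje-size-bite} for the conditional system at \emph{every} vertex of $W_0$. Your claimed identity $\sum_{i\in I_j}\PR(v\in\mathbf{e}_i^{(1)}\mid W)=(1+O_{\leq}(\delta^{1/100}))\,d_{I_j}(v)/P_1(v)$ ``simultaneously for all $v\in W$'' is exactly the problematic step: Lemma \ref{cheb} gives it for each \emph{fixed} $v$ with failure probability $O(\delta^{c})$, but to get it for all $v\in W$ at once requires a union bound over up to $\#V$ vertices, and $\#V$ is completely uncontrolled by the hypotheses (no relation between $\#V$ and $\delta$ is assumed anywhere in the theorem). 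At an exceptional vertex the conditional degree can be as large as $\kappa^{-r}d_{I_j}(v)$, which destroys both \eqref{epsbound-quant} and the lower bound $P_j(v)\geq\kappa$ for the shifted system, so these vertices cannot be absorbed into a degraded choice of $D,\kappa,\delta$.

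One could try to repair this by weakening the inductive statement so that the degree hypotheses are required only outside a small exceptional vertex set, concluding \eqref{penj-quant} only for $e$ avoiding it; but then the quantity $P^{(1)}_{J-1}(e;W_0)$ returned by the inductive call is built from the \emph{actual} conditional degrees at the vertices of $e$, and you still owe a second-moment argument, conditionally on $e\subset W$ (the analogue of \eqref{z-1}--\eqref{z-2}), that these agree with $d_{I_j}(v)/P_1(v)$ before the telescoping $P_J(e)=P_1(e)P^{(1)}_{J-1}(e)$ can be used---at which point you are redoing the paper's single-nibble analysis inside a more complicated induction. Two smaller points: the $\mathbf{e}_i$ are not assumed independent across $i$, so in the first nibble you must take $\mathbf{e}'_i$ to be independent copies with the marginals of $\mathbf{e}_i$ (harmless, since all hypotheses constrain only single-edge distributions, but it must be said); and the truncation you propose when the reweighted mass exceeds one must itself be shown not to perturb the degrees, which is precisely the role of the event $F_i$ and Lemma \ref{xconc-quant} in the paper.
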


We prove this theorem in Section \ref{pack-proof}.  It is likely that
the smallness condition \eqref{delta-small} can be relaxed, for
instance by modifying the techniques from \cite{vanvu}.  However, this
would not lead to any significant improvement in the final bound on
$G(X)$ in Theorem \ref{mainthm}, as in our application the condition \eqref{delta-small} is already satisfied with some room to spare.  The parameter $r$ does not appear explicitly in the smallness requirement \eqref{delta-small}, but is implicit in that requirement since the conclusion is trivially true unless $2r < A$.

We now discuss some special cases of this theorem which are closer to the original hypergraph covering lemma of Pippenger and Spencer. (Readers who are interested only in large gaps between primes can skip ahead to Section \ref{apply}.)  If $(V,E)$ is a hypergraph, we can take each $\mathbf{e}_i$ to be an edge of $E$ drawn uniformly at random.  If $I_j$ has cardinality $n_j$, we obtain the following corollary:

\begin{cor}[Combinatorial covering]\label{comcover}
There exists a constant $C_0 \geq 1$ such that the following holds.  Let $D, r \geq 1$, $0 < \kappa \leq 1/2$, and let $m \geq 0$ and $n_1,\ldots,n_m \geq 1$ be integers.  Set $A := 2rm+1$, and let $\delta > 0$ be a quantity obeying the smallness condition \eqref{delta-small}.  Let $(V,E)$ be a hypergraph, and assume the following axioms:
\begin{itemize}
\item[(i)] All edges $e$ in $E$ have cardinality at most $r$.
\item[(ii)] For every $v \in V$, the degree $\operatorname{deg}(v) := \# \{ e \in E: v \in e \}$ is at most $\frac{\delta}{n^{1/2}} \# E$, $n=\min(n_1,\ldots,n_m)$.
\item[(iii)] For every distinct $v,w \in V$, the codegree $\operatorname{codeg}(v,w) := \{ e \in E: v,w \in e \}$ is at most $\frac{\delta}{n} \# E$.
\item[(iv)] If for every $v \in V$ we introduce the $P_j(v)$ for $j=0,\dots,m$ by
$$ P_0(v) := 1$$
and 
$$ P_{j+1}(v) = P_j(v) \exp\( - \frac{n_j \operatorname{deg}(v)}{(\# E) P_j(v)} \)
\qquad (1\le j\le m),
$$
then we have
\begin{equation}\label{v1}
 \frac{n_{j+1} \operatorname{deg}(v)}{(\# E) P_j(v)} \leq D \qquad (0\le j\le m-1)
\end{equation}
and
\begin{equation}\label{v2}
 P_m(v) \geq \kappa.
\end{equation}
\end{itemize}
Then we can find edges $e_1,\dots,e_l \in E$ with $l \leq n_1+\cdots+n_m$ 
such that
$$
\# ( V \backslash (e_1 \cup \dots \cup e_l) ) \ll \sum_{v \in V} P_m(v).$$
\end{cor}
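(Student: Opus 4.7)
The plan is to deduce Corollary \ref{comcover} directly from Theorem \ref{packing-quant} by specializing to the case of a single hypergraph with uniformly sampled edges. Introduce disjoint index sets $I_1, \ldots, I_m$ with $\#I_j = n_j$, and for each $i \in I_j$ let $\mathbf{e}_i$ be a uniformly random edge of $E$, with all such samples mutually independent.

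Under this setup, the four hypotheses of Theorem \ref{packing-quant} translate directly into axioms (i)--(iv) of the Corollary. The edge-size bound $\#\mathbf{e}_i \leq r$ is axiom (i). Since $\PR(v \in \mathbf{e}_i) = \operatorname{deg}(v)/\#E$ and $\sum_{i \in I_j} \PR(v_1,v_2 \in \mathbf{e}_i) = n_j \operatorname{codeg}(v_1,v_2)/\#E$, the sparsity bound \eqref{q-form-quant} and codegree bound \eqref{q-form-2-quant} follow from axioms (ii) and (iii). For the degree condition, one computes $d_{I_j}(v) = n_j \operatorname{deg}(v)/\#E$, so the recursive definition \eqref{pj-def} coincides with the one in axiom (iv); then \eqref{epsbound-quant} is precisely \eqref{v1}, while \eqref{pje-size-bite} follows from \eqref{v2} via the monotonicity $P_0 \geq P_1 \geq \cdots \geq P_m$ (each step multiplies by a factor in $(0,1]$).

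Having verified the hypotheses, invoke Theorem \ref{packing-quant} with $J=m$ and $A = 2rm+1$, and apply part (b) to each singleton $e = \{v\}$ (which satisfies $\#e = 1 = A - 2rm$). This yields random $\mathbf{e}'_i \in E \cup \{\emptyset\}$ with
$$\PR\Big(v \notin \bigcup_{j=1}^m \bigcup_{i \in I_j} \mathbf{e}'_i\Big) = \(1 + O_{\leq}(\delta^{1/10^{m+1}})\) P_m(v)$$
for every $v \in V$. Summing over $v$ and using linearity of expectation gives
$$\E\, \#\Big(V \setminus \bigcup_i \mathbf{e}'_i\Big) \ll \sum_{v \in V} P_m(v),$$
so some deterministic realization of the $\mathbf{e}'_i$ achieves this bound. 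Retaining only the nonempty outcomes (which, by property (a) of Theorem \ref{packing-quant}, are genuine edges of $E$) and relabeling produces the required edges $e_1, \ldots, e_l$ with $l \leq n_1 + \cdots + n_m$.

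I do not expect a genuine obstacle here: the Corollary is essentially a restatement of Theorem \ref{packing-quant} in the combinatorial setting of uniform edge sampling from a fixed hypergraph, and the proof consists only of unpacking the probabilistic formulation and invoking linearity of expectation. The only bookkeeping point is that the single parameter $n$ in axioms (ii)--(iii) plays the role of the worst case among the $j$-indexed bounds required by Theorem \ref{packing-quant}, which is built into the hypotheses as stated.
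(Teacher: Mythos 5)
Your proposal is correct and coincides with the paper's own proof: the paper likewise takes each $\mathbf{e}_i$ to be a uniformly random edge of $E$ with $\# I_j = n_j$, invokes Theorem \ref{packing-quant} for singletons $\{v\}$, sums over $v$ by linearity of expectation, fixes a deterministic realization, and discards the empty edges. The only difference is that you spell out the verification of the theorem's hypotheses (including the monotonicity $P_0 \geq \cdots \geq P_m$ needed for \eqref{pje-size-bite}), which the paper leaves implicit.
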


\begin{proof}  Let $N=n_1+\cdots+n_m$.
 By Theorem \ref{packing-quant} (with $\mathbf{e}_i$ and $I_j$ as indicated above), we may find random variables $\mathbf{e}'_i$ for $i=1,\dots,N$ taking values in $E \cup \{\emptyset\}$ such that
$$ \PR\bigg( v \subset V \backslash \bigcup_{i=1}^{N} \mathbf{e}'_i \bigg) = \(1 + O_{\leq}\( \delta^{1/10^{m+1}} \)\) P_m(v) 
$$
for each $v \in V$, and in particular by linearity of expectation
$$ \E \#\(V \backslash \bigcup_{i=1}^{N} \mathbf{e}'_i\) \ll \sum_{v \in V} P_m(v).$$
Thus we can find instances $e'_i$ of $\mathbf{e}'_i$ such that
$$ \#\(V \backslash \bigcup_{i=1}^{N} e'_i\) \ll \sum_{v \in V} P_m(v).$$
Discarding the empty edges $e'_i$, we obtain the claim.
\end{proof}

We now give a qualitative version of the above corollary, in which all objects involved can depend on asymptotic parameter $x$ going to infinity:

\begin{cor}[Generalized Pippenger-Spencer]
Let $(V,E)$ be a hypergraph, and let $d \geq 1$ be a quantity obeying the following:
\begin{itemize}
\item One has $d = o(\# E)$.
\item All edges $e$ in $E$ have cardinality $O(1)$.
\item For every $v \in V$, one has $d \leq \operatorname{deg}(v) \ll d$.
\item For every distinct $v,w \in V$, one has $\operatorname{codeg}(v,w) = o(d)$.
\end{itemize}
Then we can find edges $e_1,\dots,e_l \in E$ with $l \lesssim \frac{\# E}{d}$ such that
\begin{equation}\label{vel}
\# ( V \backslash (e_1 \cup \dots \cup e_l) ) = o( \# V ).
\end{equation}
\end{cor}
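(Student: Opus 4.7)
The plan is to deduce the corollary directly from Corollary \ref{comcover}, with parameters chosen in terms of an auxiliary integer $M$ that is ultimately coupled slowly to the asymptotic parameter $x$.

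Fix an integer $M \geq 2$. Let $r$ be the absolute upper bound on edge cardinalities, and let $C_1$ be the implied constant in $\operatorname{deg}(v) \leq C_1 d$. Set $m := M$ and, for each $j$, $n_j := n := \lceil \#E/(Md) \rceil$, constant in $j$; note that for fixed $M$ and $x \to \infty$ one has $Mn = \#E/d + O(M) = (1+o(1))\#E/d$. The normalized degrees appearing in axiom (iv) are $\alpha_j(v) := n\operatorname{deg}(v)/\#E$, lying in $[1/M,\,(1+o(1))C_1/M]$. The key observation is that the recursion
$$ P_{j+1}(v) = P_j(v)\exp\!\bigl(-\alpha_j(v)/P_j(v)\bigr), \qquad P_0(v) = 1, $$
is a Euler discretization of the ODE $P'(t) = -1$ with vertex-dependent step size $\alpha_j(v)$. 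A short induction (differentiating in $\operatorname{deg}(v)$) shows that $P_j(v)$ is a strictly decreasing function of $\operatorname{deg}(v)$, so its maximum over $v$ is attained at a vertex $v_*$ of minimum degree. At $v_*$ the trajectory tracks $1 - j/M$ throughout the regime $P_j \gg 1/M$, and the final few steps then compress $P_j$ to order $1/M$ (once $P_j$ drops to $\asymp 1/M$, each successive step multiplies it by at most $e^{-1}$). Thus $\max_v P_M(v) \ll 1/M$.

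With this in hand, choose $A := 2rM+1$, $\kappa$ to be any positive number $\leq \min_v P_M(v)$ (which is positive and depends only on $M$ and $C_1$), $D$ an a priori upper bound for $\alpha_j(v)/P_{j-1}(v)$ (also depending only on $M$ and $C_1$), and $\delta > 0$ small enough to satisfy the smallness condition \eqref{delta-small}. The remaining axioms of Corollary \ref{comcover} are verified routinely for $x$ large: (i) is immediate; (ii), which reads $\operatorname{deg}(v) \leq \delta\#E/\sqrt{n}$, reduces (using $\operatorname{deg}(v) \leq C_1 d$ and $n \asymp \#E/(Md)$) to $\#E/d \gg_{M,C_1,\delta} 1$, which holds by $d = o(\#E)$; (iii), which reads $\operatorname{codeg}(v,w) \leq \delta\#E/n \asymp \delta M d$, holds by $\operatorname{codeg}(v,w) = o(d)$.

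Corollary \ref{comcover} then produces edges $e_1,\ldots,e_l \in E$ with $l \leq Mn \leq (1+o(1))\#E/d$ and
$$ \#\bigl(V \setminus (e_1 \cup \cdots \cup e_l)\bigr) \ll \sum_{v\in V} P_M(v) \leq \#V \cdot \max_v P_M(v) \ll \#V/M. $$
A standard diagonalization, letting $M = M(x) \to \infty$ slowly enough that $x$ remains above the threshold governing the axioms above, then converts this into $l \lesssim \#E/d$ and $\#(V \setminus \bigcup_i e_i) = o(\#V)$, as claimed. The main obstacle is the recursion analysis showing $\max_v P_M(v) \ll 1/M$: one must verify that despite the factor-$C_1$ variation of $\alpha_j(v)$, the worst-case trajectory (at a minimum-degree vertex $v_*$) actually decays to $O(1/M)$ within the $M$ available nibble steps, which amounts to a careful but elementary analysis of the discrete approximation near the blow-up at $P = 0$ of the underlying ODE.
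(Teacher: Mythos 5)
Your overall strategy -- run Corollary \ref{comcover} with a bounded number $M$ of nibbles whose total size is $(1+o(1))\#E/d$, check that $\kappa$ and $D$ depend only on $M$ and the degree constant, and then diagonalize in $M$ -- is sound and close in spirit to the paper's proof. But the central quantitative claim on which your error bound rests, namely $\max_v P_M(v) \ll 1/M$, is false for equal nibble sizes. Take the extremal case $\alpha_j(v_*) = 1/M$ and write $Q_j := M P_j(v_*)$, so that $Q_0 = M$ and $Q_{j+1} = Q_j e^{-1/Q_j}$. From $e^{-t} \geq 1-t$ one gets $Q_j \geq M-j \geq 1$ for $j \leq M$, and then the second-order term in $e^{-t} \geq 1 - t + t^2/3$ (valid for $0 \leq t \leq 1$) gives
\[
Q_M \;\geq\; \sum_{j=0}^{M-1} \frac{1}{3Q_j} \;\gg\; \sum_{L=1}^{M} \frac{1}{L + O(\log(M/L))} \;\gg\; \log M ,
\]
using the matching upper bound $Q_j \leq M - j + \tfrac12\log\frac{M}{M-j} + O(1)$. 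Thus $P_M(v_*) \asymp \frac{\log M}{M}$, not $O(1/M)$: the "compression" mechanism you describe (each step multiplying $P_j$ by $e^{-1}$ once $P_j \asymp 1/M$) never activates, because within $M$ equal steps the trajectory stalls at height $\asymp \frac{\log M}{M}$ rather than descending to $\asymp 1/M$. The "careful but elementary analysis near the blow-up" that you defer would, if carried out, refute the bound you assert rather than confirm it.

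The gap is repairable in two ways. Either keep your equal nibbles and settle for the true (and easily provable) bound $\max_v P_M(v) \ll 1/\log M$, say -- obtained by noting that each step decreases $Q_j$ by at least $1 - \frac{1}{2\log M}$ while $Q_j \geq \log M$ -- which still tends to $0$ as $M \to \infty$ and hence still yields $\#(V \setminus \bigcup_i e_i) = o(\#V)$ after diagonalization. Or do what the paper does: take $m = k^2$ nibbles of geometrically \emph{shrinking} sizes $n_j = \lceil n_1 e^{(1-j)/k}\rceil$, so that the per-step normalized degree stays proportional to the current value $P_{j-1}$; then $P_j \leq e^{-j/k}$ decays geometrically, $P_m \leq e^{-k}$ is exponentially small in $k$, and the total edge count is still $n_1/(1-e^{-1/k}) + O(k^2) \leq (1+\eps)\#E/d$. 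The paper's choice is the more efficient one precisely because the recursion $P_{j+1} = P_j\exp(-d_{I_{j+1}}(v)/P_j(v))$ rewards matching the nibble size to the surviving density, which is the point your constant-step discretization misses.
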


Note that for any given vertex $v$, the probability that a randomly selected edge $\mathbf{e}$ from $E$ will cover $v$ is $\frac{\operatorname{deg}(v)}{\# E}$, which is roughly $\frac{d}{E}$.  Thus the conclusion of the above corollary uses an essentially optimal number of edges.

\begin{proof}  By a diagonalization argument, it suffices for any fixed $\eps>0$ (independent of $x$) to show that one can find edges $e_1,\dots,e_l \in E$ with $l \leq (1+\eps) \frac{\# E}{d}$ such that 
$$ \# ( V \backslash (e_1 \cup \dots \cup e_l) ) \ll \eps \# V$$
for sufficiently large $x$.

Let $k \geq 1$ be a fixed integer (depending on $\eps$) to be chosen later, and
let $m=k^2$.  For $x$ large enough, we can find a natural number $n_1$ such that
\begin{equation}\label{1e}
\frac{\# E}{d} \leq n_1 k \leq \(1+\frac{\eps}2\) \frac{\# E}{d},
\end{equation}
and we define
\[
n_j = \lceil n_1 e^{(1-j)/k} \rceil \qquad (1\le j\le m).
\]
We now verify the conditions of Corollary \ref{comcover} with suitable choices of parameters $D,r,\kappa,\delta$.  Clearly (i) is obeyed with $r=O(1)$, and a short computation reveals that (ii), (iii) are obeyed for some $\delta = o(1)$, and (iv) is obtained for some $D = O(1)$ and $\kappa \gg 1$.  Applying Corollary \ref{comcover}, we may thus find (for $x$ sufficiently large) $e_1,\dots,e_l \in E$ with $l\leq n_1+\cdots n_m$ such that
$$ \# ( V \backslash (e_1 \cup \dots \cup e_l) ) \ll \sum_{v \in V} P_m(v).$$
We have
\[
n_1+\cdots+n_m \le m + n_1 \sum_{j=1}^\infty e^{(1-j)/k} = m+ \frac{n_1}{1-e^{-1/k}} = k^2+n_1(k+O(1)) \le (1+\eps) \frac{\# E}{d}
\]
by first taking $k$ large enough, then $x$ large enough.
Next, an easy induction shows that $P_j(v) \leq p_j$ for all $j=0,\dots,m$, where $p_0 := 1$ and
$$ p_{j+1} := p_j \exp\( - \frac{n_{j+1} d}{(\# E) p_j} \) \qquad (0\le j\le m-1).
$$
Another easy induction using \eqref{1e} shows that $p_j\le e^{-j/k}$ for all
$0\le j\le m$.  In particular, $p_m\le e^{-k} \le \eps $ if $k\ge \log(1/\eps)$.
\end{proof}

\subsection{Applying the covering theorem}\label{apply}
%
%

We now specialize Theorem \ref{packing-quant} to a situation relevant for the application to large prime gaps. 

\begin{cor}\label{packing-quant-cor}  Let $x\to\infty$.
Let $\PP'$, $\QQ'$ be sets 
with $\# \PP' \le x$ and $\#\QQ' > (\log_2 x)^3$.
For each $p \in \PP'$, let $\mathbf{e}_p$ be a random subset of $\QQ'$
satisfying the size bound
\be\label{rbound}
\# \mathbf{e}_p \le r = O\( \frac{\log x \log_3 x}{\log_2^2 x} \) \qquad
(p\in \PP').
\ee
Assume the following:
\begin{itemize}
\item (Sparsity) For all $p \in \PP'$ and $q \in \QQ'$,
\begin{equation}\label{q-form-quant-cor}
\PR( q \in \mathbf{e}_p ) \le x^{-1/2 - 1/10}.
\end{equation}
\item (Uniform covering) For all but at most $\frac{1}{(\log_2 x)^2} \# \QQ'$ elements $q \in \QQ'$, we have
\begin{equation}\label{pje-size-bite-cor}
\sum_{p \in \PP'} \PR( q \in \mathbf{e}_p) = C + O_{\le}\pfrac{1}{(\log_2 x)^2}
 \end{equation}
for some quantity $C$, independent of $q$, satsifying
\begin{equation}\label{sigma}
\frac{5}{4} \log 5 \le C \ll 1.
\end{equation}
\item (Small codegrees) For any distinct $q_1,q_2 \in \QQ'$,
\be\label{small-codegree-cor}
\sum_{p\in\PP'} \PR(q_1,q_2\in \mathbf{e}_p) \le x^{-1/20}.
\ee
\end{itemize}
Then for any positive integer $m$ with
\begin{equation}\label{moo}
 m \le \frac{\log_3 x}{\log 5},
\end{equation}
we can find random sets $\mathbf{e}'_p\subseteq \QQ'$ for each $p \in \PP'$ such that
\[
\# \{ q \in \QQ':  q \not\in \mathbf{e}'_p \hbox{ for all } p \in \PP' \} \sim 5^{-m} \# \QQ'
\]
with probability $1-o(1)$.  More generally, for any $\QQ'' \subset \QQ'$ with cardinality at least $(\# \QQ')/\sqrt{\log_2 x}$, one has
\[
\# \{ q \in \QQ'':  q \not\in \mathbf{e}'_p \hbox{ for all } p \in \PP' \} \sim 5^{-m} \# \QQ''
\]
with probability $1-o(1)$.  The decay rates in the $o(1)$ and $\sim$ notation
are uniform in $\PP'$, $\QQ'$, $\QQ''$.
\end{cor}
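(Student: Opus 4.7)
The plan is to apply Theorem \ref{packing-quant} after decomposing $\PP'$ into carefully chosen pieces $I_1,\dots,I_m$. Because the recursion \eqref{pj-def} reads $P_{j+1}(v)=P_j(v)\exp(-d_{I_{j+1}}(v)/P_j(v))$, the target $P_m(v)\asym 5^{-m}$ is attained when $d_{I_j}(v)\asym 5^{-(j-1)}\log 5$ for each $j$ and each $v$. Summed over $j$ this total covering weight is $\tfrac{5}{4}\log 5\,(1-5^{-m})$, which fits under the quantity $C$ thanks to the lower bound in \eqref{sigma}.

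To realise such a partition I would include each prime $p\in\PP'$ in $I_j$ independently with probability $\rho_j:=(5^{-(j-1)}\log 5)/C$, so that $\sum_{j=1}^m\rho_j\leq 1$ and some primes remain unassigned. For any $q$ in the set $V$ of vertices satisfying \eqref{pje-size-bite-cor} one has $\E[d_{I_j}(q)]=\rho_j C+O(\rho_j/(\log_2 x)^2)=5^{-(j-1)}\log 5+O((\log_2 x)^{-2})$. The sparsity hypothesis \eqref{q-form-quant-cor} bounds each summand of $d_{I_j}(q)$ by $x^{-1/2-1/10}$, so a Bernstein-type concentration inequality gives failure probability $\exp(-x^{\Omega(1)})$ for each fixed $q$ and $j$; a union bound over the (at most $x$) vertices of $\QQ'$ and over $j\leq m$ then produces a \emph{deterministic} partition for which $d_{I_j}(q)=5^{-(j-1)}\log 5\,(1+O((\log_2 x)^{-2}))$ uniformly in $q\in V$ and $j$. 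The ``bad'' complement has size at most $\#\QQ'/(\log_2 x)^2$, which is negligible against the target count $5^{-m}\#\QQ'\gg\#\QQ'/\log_2 x$ (using $m\leq\log_3 x/\log 5$, i.e.\ $5^m\leq\log_2 x$); these bad vertices will simply be declared to survive at the end.

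Next I would verify the hypotheses of Theorem \ref{packing-quant} with vertex set $V$ and $\delta:=x^{-1/20}$: \eqref{p-bound-quant} is \eqref{rbound}, \eqref{q-form-quant} follows from \eqref{q-form-quant-cor} together with $\#I_j\leq x$, and \eqref{q-form-2-quant} is exactly \eqref{small-codegree-cor}. A short induction on $j$ using the concentration of the previous paragraph gives $P_j(q)=5^{-j}(1+o(1))$ for every $q\in V$, supplying \eqref{epsbound-quant} with $D=2\log 5$ and \eqref{pje-size-bite} with $\kappa=1/(2\log_2 x)$. The delicate check is the smallness condition \eqref{delta-small} with $A:=2rm+2$: combining $r=O(\log x\log_3 x/\log_2^2 x)$, $m\leq\log_3 x/\log 5$ and $10^{m+2}\leq 100(\log_2 x)^{\log 10/\log 5}$ shows that $A\cdot 10^{m+2}\log(1/\kappa)=O(\log x\,(\log_3 x)^3(\log_2 x)^{-0.57})=o(\log x)$, so the right-hand side of \eqref{delta-small} comfortably exceeds $x^{-1/20}$ for large $x$.

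Applying Theorem \ref{packing-quant} produces the random sets $\mathbf{e}'_p$ for $p\in\bigcup_j I_j$; setting $\mathbf{e}'_p:=\emptyset$ for the remaining primes extends the family to all of $\PP'$. Specialising \eqref{penj-quant} with $J=m$ to singletons $e=\{q\}$ gives $\PR(q\text{ survives})=(1+o(1))5^{-m}$ for every $q\in V$, and to pairs $e=\{q_1,q_2\}$ (permitted since $A\geq 2rm+2$) gives $\PR(q_1,q_2\text{ both survive})=(1+o(1))5^{-2m}$. A standard second-moment calculation together with Chebyshev's inequality then shows that the number of survivors in $V$ is $(1+o(1))5^{-m}\#V$ with probability $1-o(1)$, using $\#V\cdot 5^{-m}\gg(\log_2 x)^2\to\infty$; the same calculation restricted to any $\QQ''\subseteq\QQ'$ of cardinality at least $\#\QQ'/\sqrt{\log_2 x}$ yields the second claim, since $\#(\QQ''\cap V)\cdot 5^{-m}\gg(\log_2 x)^{3/2}\to\infty$ and $\QQ''\setminus V$ contributes at most $\#\QQ'/(\log_2 x)^2=o(5^{-m}\#\QQ'')$ extra survivors. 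The chief obstacle is the concentration step in paragraph two: the partition must be good for all $q\in V$ simultaneously, which is what forces the sparsity bound \eqref{q-form-quant-cor} to be used at full strength rather than, say, merely through Chebyshev.
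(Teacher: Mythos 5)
Your proposal is correct and follows essentially the same route as the paper: a random assignment of each $p\in\PP'$ to $I_j$ with probability $5^{1-j}\log 5/C$, derandomized via Hoeffding-type concentration (powered by the sparsity bound \eqref{q-form-quant-cor}) and a union bound, followed by verification of the hypotheses of Theorem \ref{packing-quant} with $\delta=x^{-1/20}$, $A=2rm+2$, $\kappa\gg 5^{-m}$, and a second-moment/Chebyshev argument applied to the singleton and pair cases of \eqref{penj-quant}. The parameter checks (in particular the verification of \eqref{delta-small} and the handling of the exceptional vertices and of $\QQ''$) match the paper's.
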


For the arguments in this paper, we only need the case $\QQ'' = \QQ'$, but the more general situation $\QQ'' \subset \QQ'$ will be of use in the sequel \cite{FMT} of this paper when we consider chains of large gaps.

\begin{proof}  It suffices to establish the claim for $x$ sufficiently large, as the claim is trivial for bounded $x$.
The number of exceptional elements $q$ of $\QQ'$ that fail \eqref{pje-size-bite-cor} is $o(5^{-m} \# \QQ'')$, thanks to \eqref{moo}.  Thus we may discard these elements from $\QQ'$ and assume that \eqref{pje-size-bite-cor} holds for \emph{all} $q \in \QQ'$, since this does not significantly affect the conclusions of the corollary.

By \eqref{sigma}, we may find disjoint intervals 
$\mathscr{I}_1,\dots,\mathscr{I}_m$ in $[0,1]$ with length 
\begin{equation}\label{ij}
|\mathscr{I}_j| = \frac{5^{1-j}\log 5}{C}
\end{equation}
for $j=1,\dots,m$.
Let $\vec{\mathbf{t}} = (\mathbf{t}_p)_{p \in \PP'}$ be a tuple of elements $\mathbf{t}_p$ of $[0,1]$ drawn uniformly and independently at random for each $p \in \PP'$ (independently of the $\mathbf{e}_p$), and define the random sets
$$ I_j = I_j( \vec{\mathbf{t}} ) := \{ p \in \PP': \mathbf{t}_p \in \mathscr{I}_j \}$$
for $j=1,\dots,m$.  These sets are clearly disjoint.

We will verify (for a suitable choice of $\vec{\mathbf{t}}$) the hypotheses of Theorem \ref{packing-quant} with the indicated sets $I_j$ and random variables $\mathbf{e}_p$, and with suitable choices of parameters $D, r, A \geq 1$ and $0 < \kappa \leq 1/2$, and $V=\QQ'$.

Set
\begin{equation}\label{deltaa}
 \delta := x^{-1/20}
\end{equation}
and observe from \eqref{q-form-quant-cor} that (if $x$ is sufficiently large) one has
\be\label{q=np}
 \PR(  q \in \mathbf{e}_p  ) \leq \frac{\delta}{(\# I_j)^{1/2}}
\ee
for all $j=1,\dots,m$, $p \in I_j$, and $q \in \QQ'$.  
Clearly the small
codegree condition \eqref{small-codegree-cor} implies that
\be\label{q1q2=np}
 \sum_{p \in I_j} \PR( q_1, q_2  \in \mathbf{e}_p  ) \leq \delta
\qquad (1\le j\le m).\ee

Let $q \in \QQ'$, $1\le j\le m$ and consider the independent random
variables $(\mathbf{X}_p^{(q,j)}(\vec{\mathbf{t}}))_{p\in \PP'}$, where
$$
\mathbf{X}_p^{(q,j)}(\vec{\mathbf{t}})=\begin{cases}
\PR( q \in \mathbf{e}_p) & \text{ if } p\in I_j \\
0 & \text{ otherwise.}
\end{cases}
$$
By \eqref{pje-size-bite-cor}, \eqref{sigma} and \eqref{ij}, for every
$j$ and every $q\in \QQ'$,
\[
\sum_{p\in \PP'}\E \mathbf{X}_p^{(q,j)}(\vec{\mathbf{t}}) = 
\sum_{p\in \PP'} \PR( q \in \mathbf{e}_p) \PR(p\in I_j(\vec{\mathbf{t}}))
= |\mathscr{I}_j| \sum_{p\in \PP'}
\PR( q \in \mathbf{e}_p) = 5^{1-j}\log 5 + O_{\le}\pfrac{4/5}{(\log_2 x)^2}.
\]
By \eqref{q-form-quant-cor}, we have
$|\mathbf{X}_p^{(q,j)}(\vec{\mathbf{t}})| \le x^{-1/2-1/10}$ for all $p$, and hence by
Hoeffding's inequality,
\begin{align*}
\PR\( \Big| \sum_{p\in\PP'} (\mathbf{X}_p^{(q,j)}(\vec{\mathbf{t}})-\E \mathbf{X}_p^{(q,j)}(\vec{\mathbf{t}})) \Big| \ge
\frac{1}{(\log_2 x)^2} \) &\le 2 \exp \left\{ - \frac{(\log_2 x)^{-4}}{2x^{-1-1/5} \# I_j} \right\} \\ &\le 2  \exp \left\{ - \frac{x^{1/5}}{(\log_2 x)^4} \right\}
\ll \frac{1}{x^4}.
\end{align*}
By a union bound, there is a deterministic 
choice $\vec{t}$ of $\vec{\mathbf{t}}$ (and hence $I_1,\dots,I_m$)
such that for \emph{every} $q\in \QQ'$ and \emph{every}
$j=1,\ldots,m$, we have
\[
\Big| \sum_{p\in\PP'} (\mathbf{X}_p^{(q,j)}(\vec{t})-\E \mathbf{X}_p^{(q,j)}(\vec{\mathbf{t}})) \Big| < \frac{1}{(\log_2 x)^2}.
\]
We fix this choice $\vec{t}$ (so that the $I_j$ are now deterministic),
and we conclude that
\begin{equation}\label{oo}
\sum_{p\in\PP'} \mathbf{X}_p^{(q,j)}(\vec{t}) = 
\sum_{p \in I_j} \PR( q \in \mathbf{e}_p) =  5^{1-j}\log 5 + O_{\le}\pfrac{2}{(\log_2 x)^2}
\end{equation}
uniformly for all $j=1,\dots,m$, and all $q\in \QQ'$.

From \eqref{epsqj-quant} and \eqref{moo}, we now have
$$ d_{I_j}(q) = (1 + O_\leq(\mu)) 5^{-j+1} \log 5$$
for all $q \in \QQ'$, $1\le j\le m$ and some $|\mu| \le 2/\log_2 x$.  A routine induction using \eqref{p0-def}, \eqref{pj-def} then shows (for $x$ sufficiently large) that
\begin{equation}\label{moldy}
 P_j(q) = (1 + O_{\leq}( 4^j \mu ) ) 5^{-j} = 5^{-j}(1+O_{\le}(2(\log_2 x)^{-\nu})) \qquad (0\le j\le m),
\end{equation}
where $\nu = \log(5/4)/\log(5)$.
In particular we have
$$ d_{I_j}(q) \leq D P_{j-1}(q)\qquad (1\le j\le m)$$
for some $D=O(1)$, and
$$ P_j(q) \geq \kappa \qquad (1\le j\le m),$$
where
$$ \kappa \gg 5^{-m}.$$
We now set
$$ A := 2rm+2.$$
By \eqref{moo} and \eqref{rbound},
$$ A \ll \frac{\log x \log_3^2 x}{\log_2^2 x},$$
and so
$$ \frac{\kappa^A}{C_0 \exp(AD)} \gg \exp\( - O\( \frac{\log x \log_3^3 x}{\log_2^2 x} \) \). $$
By \eqref{moo} and \eqref{deltaa},  we see that
$$ \delta^{1/10^{m+2}} \le \exp\( -\frac{\log{x}}{2000(\log_2{x})^{\log{10}/\log{5}}}\),$$ 
and so  \eqref{delta-small} is satisfied $x$ is large enough (note that $\log 10 / \log 5 < 2$). Thus all the hypotheses of Theorem \ref{packing-quant} have been verified for this choice of parameters (note that $A,\kappa$ and $D$ are independent of $\PP'$, $\QQ'$).

Applying Theorem \ref{packing-quant} (with $V = \QQ'$) and using \eqref{moldy}, one thus obtains random variables $\mathbf{e}'_p$ for $p \in \bigcup_{j=1}^m I_j$ whose essential range is contained in the essential range of $\mathbf{e}_p$ together with $\emptyset$, such that
\be\label{qmiss}
 \PR\left( q \not \in \bigcup_{j=1}^m \bigcup_{p \in I_j} \mathbf{e}'_p \right) = 5^{-m} \(1+O((\log_2 x)^{-\nu})\) 
\ee
for all $q \in \QQ'$, and
\be\label{q1q2miss}
 \PR\left( q_1, q_2 \not \in \bigcup_{j=1}^m \bigcup_{p \in I_j} \mathbf{e}'_p \right) = 5^{-2m} \(1+O((\log_2 x)^{-\nu})\) 
\ee
for all distinct $q_1,q_2 \in \QQ'$.  

Set $\mathbf{e}'_p = \emptyset$ for $p \in \PP' \backslash \bigcup_{j=1}^m I_j$.  
Let $\QQ''$ be as in the corollary, and consider the random variable
$$ \mathbf{Y} := \# \{ q \in \QQ'': q \not\in \mathbf{e}'_p \hbox{ for all } p \in \PP' \}= \sum_{q \in \QQ''} 1_{q \not \in \bigcup_{j=1}^m \bigcup_{p \in I_j} \mathbf{e}'_p}.$$
Using \eqref{qmiss} and \eqref{q1q2miss}, we obtain
$$ \E \mathbf{Y} = 5^{-m} \(1+O((\log_2 x)^{-\nu})\) \# \QQ''$$
and
$$ \E \mathbf{Y}^2 = 5^{-2m} \(1+O((\log_2 x)^{-\nu})\) (\# \QQ'')^2
+O(5^{-m}\# \QQ'')  = 5^{-2m} \(1+O((\log_2 x)^{-\nu})\) (\# \QQ'')^2,$$
(here we use \eqref{moo} and the mild bound $\# \QQ'' > (\log_2 x)^2$), 
and so from Chebyshev's inequality we have
$$ \mathbf{Y} \sim 5^{-m} \# \QQ''$$
with probability $1-o(1)$, as required.
\end{proof}

In view of the above corollary, we may now reduce Theorem \ref{sieve-primes} to the following claim.

\begin{thm}[Random construction]\label{sieve-primes-2}  Let $x$ be a sufficiently large real number and define $y$ by \eqref{ydef}.
 Then there is a quantity $C$ with
\begin{equation}\label{sigma-order}
C \order \frac{1}{c}
\end{equation}
with the implied constants independent of $c$, a tuple of positive integers
$(h_1,\ldots,h_r)$ with $r\le \sqrt{\log x}$,
and some way to choose random vectors $\vec{\mathbf{a}}=(\mathbf{a}_s \mod s)_{s \in \cS}$
and $\vec{\mathbf{n}}=(\mathbf{n}_p)_{p \in \PP}$ of congruence classes $\mathbf{a}_s \mod s$ and integers $\mathbf{n}_p$ respectively, obeying the following:
\begin{itemize}
\item For every $\vec a$ in the essential range of $\vec{\mathbf{a}}$, one has
$$ \PR( q \in \mathbf{e}_p(\vec{a})  | \vec{\mathbf{a}} = \vec a) 
\le x^{-1/2 - 1/10} \quad (p\in \PP),
$$
where $\mathbf{e}_p(\vec{a}):=\{ \mathbf{n}_p+h_i p : 1\le i\le r\} \cap \QQ \cap S(\vec{a})$.
\item With probability $1-o(1)$ we have that
\begin{equation}\label{treat}
 \# (\QQ \cap S(\vec{\mathbf{a}})) \sim 80 c \frac{x}{\log x} \log_2 x.
\end{equation}
\item Call an element $\vec a$ in the essential range of $\vec{\mathbf{a}}$
\emph{good} if, for all but at most 
$\frac{x}{\log x \log_2 x}$ elements $q \in \QQ \cap S(\vec{a})$, one has
\be\label{good}
\sum_{p \in \PP} \PR( q \in \mathbf{e}_p(\vec{a})  | \vec{\mathbf{a}}=\vec{a}) = C + O_{\le} \pfrac{1}{(\log_2 x)^2}.
\ee
Then  $\vec{\mathbf{a}}$ is good with probability $1-o(1)$.
\end{itemize}
\end{thm}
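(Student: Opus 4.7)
The plan is to invoke the sieve weight construction of Theorem \ref{weight}. Fix an admissible $r$-tuple $(h_1,\dots,h_r)$ with $r = \lfloor \log^{1/5} x \rfloor \le \sqrt{\log x}$ (as permitted by \cite{maynard-dense}), take $\vec{\mathbf{a}} = (\mathbf{a}_s \mod s)_{s \in \cS}$ to be a vector of independent uniformly random residues, and, independently of $\vec{\mathbf{a}}$, take the $\mathbf{n}_p$ to be mutually independent with each $\mathbf{n}_p$ drawn from the Maynard/GPY sieve distribution supplied by Theorem \ref{weight}, i.e.\ with probability proportional to a suitable sieve weight $w(\mathbf{n}_p + h_1 p,\dots,\mathbf{n}_p + h_r p)$ supported on an interval of length $\order x \log x$. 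The three required properties should then be read off from this construction.

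The sparsity bound is immediate: the distribution of $\mathbf{n}_p$ has $L^\infty$-norm at most $x^{-1+o(1)}$ by standard sieve estimates, so by a union bound over the $r \le \sqrt{\log x}$ shifts $q - h_i p$ one gets $\PR(q \in \mathbf{e}_p(\vec a) \mid \vec{\mathbf{a}} = \vec a) \le r \cdot x^{-1+o(1)} \le x^{-1/2-1/10}$, uniformly in $\vec{a}$. For the sifting asymptotic \eqref{treat}, independence and uniformity of the $\mathbf{a}_s$ give
\[
\E\, \#(\QQ \cap S(\vec{\mathbf{a}})) = \#\QQ \cdot \prod_{s \in \cS}\Bigl(1-\tfrac{1}{s}\Bigr),
\]
which by the prime number theorem, Mertens' theorem, \eqref{ydef}, \eqref{zdef} and \eqref{s-def} is asymptotic to $80 c \frac{x}{\log x} \log_2 x$. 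A second moment calculation using independence of the $\mathbf{a}_s$ across $s$, followed by Chebyshev's inequality (or equivalently Lemma \ref{cheb}), gives concentration at this mean with probability $1-o(1)$.

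The substantive part is the uniform covering \eqref{good}. Since $\vec{\mathbf{n}}$ is independent of $\vec{\mathbf{a}}$ and the events $\{\mathbf{n}_p = q - h_i p\}$ are disjoint in $i$, for each $q \in \QQ \cap S(\vec a)$ one has
\[
\sum_{p \in \PP} \PR(q \in \mathbf{e}_p(\vec a) \mid \vec{\mathbf{a}} = \vec a) = \sum_{p \in \PP} \sum_{i=1}^{r} \PR(\mathbf{n}_p = q - h_i p).
\]
The expectation analysis carried out in Theorem \ref{weight} should show that the right-hand side is, up to a factor of $1 + O((\log_2 x)^{-2})$, equal to a constant $C$ independent of $q$, provided $q$ avoids a thin exceptional set whose contribution to $\QQ \cap S(\vec a)$ is $o(x/(\log x \log_2 x))$. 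Double-counting the total weight against \eqref{treat} (and using that each $\E \#\mathbf{e}_p(\vec a)$ is of order $\log r \order \log_2 x$) identifies $C \order 1/c$, confirming \eqref{sigma-order}. The conclusion that $\vec{\mathbf{a}}$ is good with probability $1-o(1)$ is then obtained by applying Lemma \ref{cheb} to the event that a given $q$ lies in the exceptional set, taking $\vec{\mathbf{a}}$ in the role of $\mathbf{X}$ and the realization of $\vec{\mathbf{n}}$ in the role of $\mathbf{Y}$, with $\theta = (\log_2 x)^{-2}$; summing in $q$ and applying Markov's inequality bounds the expected size of the bad set.

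The main obstacle is the expectation calculation underlying \eqref{good}: establishing that the sum $\sum_{p,i} \PR(\mathbf{n}_p = q - h_i p)$ is essentially independent of $q$ to relative precision $(\log_2 x)^{-2}$ requires both a careful asymptotic analysis of the multidimensional sieve integrals and a Bombieri--Vinogradov-type equidistribution input for primes in arithmetic progressions (needed to control the sum over $p \in \PP$ uniformly). These are precisely the contents of Sections \ref{sec:weight}, \ref{sec:sievemulti} and \ref{sec:verification}, packaged into Theorem \ref{weight}; with that result in hand, the present theorem follows from the three-step argument above.
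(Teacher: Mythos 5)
Your construction has a genuine structural gap: you take the $\mathbf{n}_p$ to be independent of $\vec{\mathbf{a}}$, drawn directly from the sieve distribution $\PR(\tilde{\mathbf{n}}_p = n) \propto w(p,n)$. With that choice, for $q \in \QQ \cap S(\vec a)$ one indeed gets $\sum_{p\in\PP}\PR(q\in\mathbf{e}_p(\vec a)\mid\vec{\mathbf{a}}=\vec a)=\sum_{p,i}\PR(\tilde{\mathbf{n}}_p=q-h_ip)$, but by \eqref{wap}--\eqref{wbp} this equals $(1+o(1))\,u\,\frac{x}{2y} \asymp \frac{(\log_2 x)^2}{c\log x\log_3 x} = o(1)$, not $\asymp 1/c$. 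The double-counting heuristic you invoke to identify $C \asymp 1/c$ presumes that each $\mathbf{e}_p(\vec a)$ typically contains $\asymp\log_2 x$ elements of $\QQ\cap S(\vec a)$; but with $\mathbf{n}_p$ unconditioned, the $\asymp\log r$ primes among $\mathbf{n}_p+h_1p,\dots,\mathbf{n}_p+h_rp$ each lie in $S(\vec a)$ only with probability $\approx\sigma=o(1)$, so $\mathbf{e}_p(\vec a)$ is typically empty and the covering is ineffective. So \eqref{sigma-order} and \eqref{good} fail for your random variables.

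The paper's proof repairs exactly this point. After drawing $\vec{\mathbf{a}}$, it defines $\mathbf{n}_p$ (for $p$ in a large subset $\PP(\vec a)$) by \emph{conditioning} $\tilde{\mathbf{n}}_p$ on the event that \emph{all} shifts $\mathbf{n}_p+h_jp$, $j=1,\dots,r$, lie in $S(\vec a)$, renormalizing by $X_p(\vec a)=\PR(\tilde{\mathbf{n}}_p+h_jp\in S(\vec a)\ \forall j)\approx\sigma^r$ (see \eqref{xp-def}--\eqref{xpa}). For $q\in S(\vec a)$ the condition at $j=i$ is automatic, so the conditioning multiplies the covering sum by roughly $\sigma^{r-1}/\sigma^r=1/\sigma$, yielding $C=\frac{u}{\sigma}\frac{x}{2y}\asymp 1/c$ by \eqref{gamma-y}. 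This step creates the additional work your sketch omits: one must show via Lemma \ref{gamma-cor} and a second-moment argument that $X_p(\vec{\mathbf{a}})=(1+O(\log^{-3}x))\sigma^r$ for all but a few $p$ (Lemma \ref{smc}), check that the conditional density still obeys the sparsity bound (using $\sigma^{-r}=x^{o(1)}$ from \eqref{gamma-small} together with \eqref{w-triv} and \eqref{alpha-crude}), and run the concentration argument of Lemma \ref{smc-2} for the reweighted sums $\sigma^{-r}\sum_{i,p}Z_p(\vec{\mathbf{a}};q-h_ip)$ rather than for the unconditioned quantities. Your treatment of \eqref{treat} and of the sparsity bound is otherwise in line with the paper, though note that the correct $L^\infty$ bound on the density is $x^{-2/3+o(1)}$ (from \eqref{w-triv} and \eqref{wap}), not $x^{-1+o(1)}$; this is still ample for the stated $x^{-1/2-1/10}$.
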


We now show why Theorem \ref{sieve-primes-2} implies Theorem \ref{sieve-primes}.  By \eqref{sigma-order}, we may choose $0 < c < 1/2$ small enough so that \eqref{sigma} holds.  Take 
\[
 m = \left\lfloor \frac{\log_3 x}{\log 5} \right\rfloor.
\]
Now let $\vec{\ba}$ and $\vec{\mathbf{n}}$ be the random vectors
guaranteed by Theorem \ref{sieve-primes-2}.
Suppose that we are in the probability $1-o(1)$ event that $\vec{\mathbf{a}}$ takes a value $\vec a$ which is good and such that \eqref{treat} holds.  
Fix some $\vec a$ within this event.  We may apply Corollary \ref{packing-quant-cor} with $\PP'=\PP$ and $\QQ'=\QQ\cap S(\vec{a})$
for the random variables $\mathbf{n}_p$ conditioned to $\vec{\ba}=\vec{a}$.
A few hypotheses of the Corollary must be verified.
First, \eqref{pje-size-bite-cor} follows from \eqref{good}.
The small codegree condition \eqref{small-codegree-cor} is also quickly
checked.  Indeed, for distinct $q_1,q_2\in \QQ'$, if $q_1,q_2 \in \mathbf{e}_p(\vec{a})$ then $p|q_1-q_2$.  But $q_1-q_2$ is a nonzero integer
of size at most $x\log x$, and is thus divisible by at most one prime $p_0\in \PP'$.  Hence
\[
\sum_{p\in\PP'} \PR(q_1,q_2\in\mathbf{e}_p(\vec{a})) =  \PR(q_1,q_2\in\mathbf{e}_{p_0}(\vec{a})) \le x^{-1/2-1/10},
\]
the sum on the left side being zero if $p_0$ doesn't exist.
By  Corollary \ref{packing-quant-cor}, there exist 
random variables $\mathbf{e}'_p(\vec{a})$, whose essential range is
contained in the essential range of $\mathbf{e}_p(\vec{a})$ together
with $\emptyset$, and satisfying 
$$
 \{ q\in \QQ \cap S(\vec{a}) : q\not\in \mathbf{e}'_p(\vec{a}) \text{ for all }p\in\PP\} 
\sim 5^{-m} \# (\QQ\cap S(\vec{a})) \ll \frac{x}{\log x}
$$
with probability $1-o(1)$, where we have used  \eqref{treat}.
Since $\mathbf{e}'_p(\vec{a})=\{\mathbf{n}'_p+h_i p : 1\le i\le r\} \cap \QQ
\cap S(\vec{a})$ for some random integer $\mathbf{n}'_p$, it follows that
$$
 \{ q\in \QQ \cap S(\vec{a}) : q\not \equiv \mathbf{n}'_p\pmod{p} \text{ for all }p\in\PP\} \ll \frac{x}{\log x}
$$
with probability $1-o(1)$.  Taking a specific $\vec{\mathbf{n}}'=\vec{n}'$ for which this
relation holds and setting
 $b_p=n'_p$ for all $p$ concludes the proof of 
the claim \eqref{up-short-random} and establishes
 Theorem \ref{sieve-primes}.

It remains to establish Theorem \ref{sieve-primes-2}.  This will be achieved in later sections.

\section{Proof of covering theorem}\label{pack-proof}

We now prove Theorem \ref{packing-quant}.  
Let $C_0$ be a sufficiently large absolute constant.

We induct on $m$.  The case $m=0$ is vacuous, so suppose that $m \geq
1$ and that the claim has already been proven for $m-1$.  Let
$D,r,A,\kappa,\delta,I_j,\mathbf{e}_i,V$ be as in the theorem.  By the
induction hypothesis, we can already find random variables
$\mathbf{e}'_i$ for $i \in \bigcup_{j=1}^{m-1} I_j$ obeying the
conclusions (a), (b) of the theorem for $m-1$.  In particular, we may
form the partially sifted set
$$ 
\mathbf{W} := V \backslash \bigcup_{j=1}^{m-1} \bigcup_{i \in I_j} {\mathbf e}'_i,$$
and we have
\begin{equation}\label{hyp}
 \PR( e \subset \mathbf{W} ) = (1 + O_{\leq}( \delta^{1/10^{m}} )) P_{m-1}(e)
\end{equation}
whenever $e\subset V$ has cardinality $\# e \leq A - 2r(m-1)$. 

Our task is then to construct random variables $\mathbf{e}'_i$ for $i
\in I_m$, possibly coupled with existing random variables such as
$\mathbf{W}$, whose essential range is contained in that of
$\mathbf{e}_i$ together with the empty set, and
such that 
\begin{equation}\label{econc-quant}
 \PR\( e \subset \mathbf{W} \backslash \bigcup_{i \in I_m} \mathbf{e}'_i \) = \(1 + O_{\leq}( \delta^{1/10^{m+1}} )\) P_m(e) 
\end{equation}
for all finite subsets $e$ of $V$ with $\# e \leq A - 2rm$.  Note that
we may assume that $A > 2rm$, as the claim \eqref{penj-quant} 
is trivial otherwise.  In particular we have
\begin{equation}\label{e2r}
A - 2r(m-1) > 2r.
\end{equation}

\newcommand{\tei}{\tilde{e}_i}
\newcommand{\hei}{\hat{e}_i}

From \eqref{pje-size-bite}, \eqref{pje-def} we note that
\begin{equation}\label{pje-size-quant}
P_j(\tilde{e}) \geq \kappa^{\# \tilde{e}}
\end{equation}
whenever $j=1,\dots,m$ and all $\tilde{e} \subset V$.
In particular, by \eqref{pje-size-quant} and \eqref{p-bound-quant},
 whenever $\tei$ is in the essential range of $\bfe_i$, we have
\be\label{pjei-size-quant}
P_j(\tei) \geq \kappa^{r}.
\ee
For future reference, we observe that from \eqref{e2r} and
\eqref{delta-small}, we have
\be\label{delta-small-cor}
r \kappa^{-r} \le A \kappa^{-r} \le A^2 \kappa^{-2r} \le A^2 D
\kappa^{-A} \le \delta^{-1/10^{m+2}}.
\ee

For each $i \in I_m$, and every $W$ in the essential range of $\mathbf{W}$, 
define the normalization factor
\begin{equation}\label{xdef}
X_i(W) := 
\E\(\frac{1_{\mathbf{e}_i \subset W}}{P_{m-1}(\mathbf{e}_i)} \)
= \sum_{\tei \subset W} \frac{\PR(\bfe_i=\tei)}{P_{m-1}(\tei)}.
\end{equation}
We will see shortly, and this is crucial to our argument,
that $X_i(\bfW)$ concentrates to 1.  With this in mind, we let
$F_i = F_i(\mathbf{W})$ be the event that 
\be\label{xiw}
|X_i(\bfW)-1| \le \delta^{\frac{1}{3 \times 10^m}}.
\ee 
Very small values of $X_i(W)$, in particular sets $W$ with $X_i(W)=0$,
are problematic for us and must be avoided.  Fortunately, this occurs
with very small probability.

We now define the random variables $\mathbf{e}'_i$ for $i \in I_m$.  
If $F_i(\bfW)$ fails, we set $\bfe_i'=\emptyset$.  Otherwise, if $F_i(\bfW)$
holds, then after conditioning on a fixed value $W$ of $\bfW$,
we choose $\mathbf{e}'_i$ from the essential range of 
$\mathbf{e}_i$ using the conditional probability distribution
\begin{equation}\label{qorn}
 \PR( \mathbf{e}'_i = \tei | \mathbf{W} = W ) := \frac{1_{\tei \subset W}}{X_i(W)}
  \frac{\PR( \mathbf{e}_i = \tei)}{P_{m-1}(\tei)}
\end{equation}
for all $\tei$ in the essential range of $\mathbf{e}_i$, and also require that
the $\mathbf{e}'_i$ are conditionally jointly independent for $i \in
I_m$ on each event $\mathbf{W}=W$.  Note from \eqref{xdef} that
\eqref{qorn} defines a probability distribution, and so the
$\mathbf{e}'_i$ are well defined as random variables.  Informally,
$\mathbf{e}'_i$ is $\mathbf{e}_i$ conditioned to the event
$\mathbf{e}_i \subset W$, and then reweighted by
$P_{m-1}(\mathbf{e}_i)$ to compensate for the bias caused by this
conditioning.

\begin{lem}\label{xconc-quant} We have
\[
 \PR ( F_i(\bfW) ) = 1 - O( \delta^{\frac{1}{3 \times 10^m}} ).
\]
\end{lem}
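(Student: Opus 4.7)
The plan is to apply Chebyshev's inequality to the random variable $X_i(\mathbf{W})$, by computing both its expectation and its second moment and showing that each is $1 + O(\delta^{1/10^m})$. Since $\E(X_i(\mathbf{W}) - 1)^2 = \E X_i(\mathbf{W})^2 - 2\E X_i(\mathbf{W}) + 1$, these two estimates combined give $\E(X_i(\mathbf{W}) - 1)^2 = O(\delta^{1/10^m})$, and then Markov's inequality with $\theta = \delta^{1/(3 \times 10^m)}$ yields
$$\PR(|X_i(\mathbf{W}) - 1| > \theta) \ll \delta^{1/10^m}/\theta^2 = \delta^{1/(3 \times 10^m)},$$
which is exactly the claimed bound.

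For the first moment, expand the definition \eqref{xdef} and interchange sums:
$$\E X_i(\mathbf{W}) = \sum_{\tilde{e}} \frac{\PR(\mathbf{e}_i = \tilde{e})}{P_{m-1}(\tilde{e})} \PR(\tilde{e} \subset \mathbf{W}).$$
Every $\tilde{e}$ in the essential range of $\mathbf{e}_i$ satisfies $\#\tilde{e} \le r < A - 2r(m-1)$ by \eqref{p-bound-quant} and \eqref{e2r}, so the induction hypothesis \eqref{hyp} applies: $\PR(\tilde{e} \subset \mathbf{W}) = (1 + O_{\le}(\delta^{1/10^m})) P_{m-1}(\tilde{e})$. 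The reweighting factor $P_{m-1}(\tilde{e})^{-1}$ in \eqref{xdef} was chosen precisely to cancel this, leaving $\E X_i(\mathbf{W}) = 1 + O_\le(\delta^{1/10^m})$.

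For the second moment, let $\mathbf{e}''_i$ be an independent copy of $\mathbf{e}_i$. Writing $X_i(\mathbf{W})^2$ as a double expectation over $\mathbf{e}_i, \mathbf{e}''_i$ and taking $\E_{\mathbf{W}}$ inside, one gets
$$\E X_i(\mathbf{W})^2 = \E_{\mathbf{e}_i, \mathbf{e}''_i} \frac{\PR(\mathbf{e}_i \cup \mathbf{e}''_i \subset \mathbf{W})}{P_{m-1}(\mathbf{e}_i) P_{m-1}(\mathbf{e}''_i)}.$$
Since $\#(\mathbf{e}_i \cup \mathbf{e}''_i) \le 2r < A - 2r(m-1)$ by \eqref{e2r}, \eqref{hyp} applies once more, and combined with the multiplicative identity $P_{m-1}(A)P_{m-1}(B) = P_{m-1}(A\cup B) P_{m-1}(A \cap B)$ that follows from \eqref{pje-def}, this gives
$$\E X_i(\mathbf{W})^2 = (1 + O_\le(\delta^{1/10^m})) \, \E \frac{1}{P_{m-1}(\mathbf{e}_i \cap \mathbf{e}''_i)}.$$
The integrand equals $1$ when $\mathbf{e}_i \cap \mathbf{e}''_i = \emptyset$, and is bounded by $\kappa^{-r}$ otherwise via \eqref{pje-size-quant} and \eqref{p-bound-quant}. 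A union bound using \eqref{q-form-quant} and \eqref{p-bound-quant} gives
$$\PR(\mathbf{e}_i \cap \mathbf{e}''_i \neq \emptyset) \le \sum_v \PR(v \in \mathbf{e}_i)^2 \le \frac{\delta}{(\#I_m)^{1/2}} \sum_v \PR(v \in \mathbf{e}_i) \le \frac{\delta r}{(\#I_m)^{1/2}},$$
so the off-diagonal contribution is $\le \kappa^{-r} \delta r \le \delta^{1 - 1/10^{m+2}}$ by \eqref{delta-small-cor}, which is comfortably smaller than $\delta^{1/10^m}$. Hence $\E X_i(\mathbf{W})^2 = 1 + O(\delta^{1/10^m})$, and the Chebyshev step above concludes the proof.

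The main obstacle is controlling this off-diagonal term: it is the only place where the sparsity hypothesis \eqref{q-form-quant} is essential (independence of $\mathbf{e}_i$ and $\mathbf{e}''_i$ converts the single-edge sparsity into collision improbability), and it is precisely what forces the double-exponential reserve $10^{m+2}$ in the smallness condition \eqref{delta-small}, since one must absorb the factor $\kappa^{-r}$ arising from the worst case of the intersection being nonempty. The drop in the quality of approximation from $\delta^{1/10^{m}}$ at stage $m-1$ to $\delta^{1/10^{m+1}}$ at stage $m$ in \eqref{penj-quant} is a reflection of the Chebyshev loss (roughly, squaring) incurred in this step.
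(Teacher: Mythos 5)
Your proposal is correct and follows essentially the same route as the paper: reduce to the first and second moment estimates $\E X_i(\mathbf{W}) = 1 + O(\delta^{1/10^m})$ and $\E X_i(\mathbf{W})^2 = 1 + O(\delta^{1/10^m})$ via the induction hypothesis \eqref{hyp}, use the multiplicativity of $P_{m-1}$ to reduce the second moment to $\E\, P_{m-1}(\mathbf{e}_i \cap \mathbf{e}''_i)^{-1}$, bound the off-diagonal term by $O(r\delta\kappa^{-r})$ using \eqref{q-form-quant}, \eqref{p-bound-quant} and \eqref{delta-small-cor}, and finish with Chebyshev at threshold $\delta^{1/(3\times 10^m)}$. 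This is exactly the paper's argument (the paper phrases the Chebyshev step through Lemma \ref{cheb}, but the computation is identical).
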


\begin{proof} By Chebyshev's inequality (Lemma \ref{cheb}), 
it suffices to show that
\begin{equation}\label{mom1}
 \E X_i(\bfW) = 1 + O( \delta^{\frac{1}{10^m}} )
\end{equation}
and
\begin{equation}\label{mom2}
 \E (X_i(\bfW)^2)  = 1 + O( \delta^{\frac{1}{10^m}} ).
\end{equation}
We begin with \eqref{mom1}.  Let $\tei$ be in the essential range of $\bfe_i$.
 From \eqref{p-bound-quant} and \eqref{e2r} we have
$$
\# \tei \le r \le A - 2r(m-1)
$$
and thus by \eqref{xdef} and \eqref{hyp}, we have
\begin{align*}
\E X_i(\bfW) &= \sum_W \PR(\bfW=W) \sum_{\tei\subset W}
  \frac{\PR(\bfe_i=\tei)}{P_{m-1}(\tei)} \\
&= \sum_{\tei} \PR(\bfe_i=\tei) \frac{\PR(\tei \subset \bfW)}{P_{m-1}(\tei)}
=1+ O_{\le}(\delta^{\frac{1}{10^m}}).
\end{align*}

Now we show \eqref{mom2}.  Let $\tei$ and $\hei$
be in the essential range of $\bfe_i$.
From \eqref{p-bound-quant}, \eqref{e2r} we have
$$
\# \tei \cup \hei \leq A - 2r(m-1)
$$
and from \eqref{pje-def} we have
$$ \frac{P_{m-1}( \tei \cup \hei )}{ P_{m-1}(\tei) P_{m-1}(\hei) } 
= \frac{1}{P_{m-1}( \tei \cap \hei )}$$ 
and thus by \eqref{xdef} and \eqref{hyp} we have
\begin{align*}
\E (X_i(\bfW)^2) &= \sum_{\tei,\hei} \PR(\bfe_i=\tei) \PR(\bfe_i=\hei)
\frac{\PR(\tei \cup \hei \subset \bfW)}{P_{m-1}(\tei) P_{m-1}(\hei)}\\
&=\(1+ O_{\le}(\delta^{\frac{1}{10^m}})\) \sum_{\tei,\hei}
\frac{\PR(\bfe_i=\tei) \PR(\bfe_i=\hei)}{P_{m-1}(\tei \cap \hei)}.
\end{align*}
The denominator $P_{m-1}(\tei \cap \hei)$ is 1 if $\tei \cap \hei =
  \emptyset$, and is at least  $\kappa^{r}$ otherwise, 
thanks to \eqref{pjei-size-quant}.
Thus, by \eqref{p-bound-quant}, \eqref{q-form-quant} and a union bound,
\[
\sum_{\tei,\hei}
\frac{\PR(\bfe_i=\tei) \PR(\bfe_i=\hei)}{P_{m-1}(\tei \cap \hei)} = 1 +
O\( \kappa^{-r} \sum_{\tei} \PR(\bfe_i=\tei) \sum_{v\in \tei} \PR(v\in
\bfe_i) \) = 1 + O(r\delta \kappa^{-r}),
\]
and the claim \eqref{mom2} follows from \eqref{delta-small-cor}.
\end{proof}

It remains to verify \eqref{econc-quant}.  
Let $e$ be a fixed subset of $V$ with 
\begin{equation}\label{E-bound-quant}
\# e \leq A - 2rm.
\end{equation}
For any $W$ in the essential range of $\mathbf{W}$, let $Y(W)$ denote the quantity
$$ Y( W ) := \PR\( e \subset W \backslash \bigcup_{i \in I_m} \mathbf{e}'_i | \mathbf{W} = W \).$$
From \eqref{pj-def}, \eqref{pje-def}, \eqref{idem}, our task is now to show that
$$
 \E Y(\mathbf{W})  = \(1 + O_{\leq}( \delta^{1/10^{m+1}} )\)P_{m-1}(e) \exp\Bigg( - \sum_{v \in e} \frac{d_{I_m}(v)}{P_{m-1}(v)} \Bigg).
$$
Clearly $Y(\mathbf{W})$ is only non-zero when $e \subset \mathbf{W}$.  From \eqref{hyp} we have
\begin{equation}\label{ew}
\PR( e \subset \mathbf{W} ) = (1 + O_{\leq}( \delta^{1/10^{m}} ))P_{m-1}(e),
\end{equation}
so it will suffice to show that
$$
 \E( Y( \mathbf{W} ) | e \subset \mathbf{W} ) =
\(1 + O(\delta^{ \frac{1}{9 \times 10^m} } )\)
 \exp\Bigg( - \sum_{v \in e} \frac{d_{I_m}(v)}{P_{m-1}(v)} \Bigg).
$$
From \eqref{epsbound-quant}, \eqref{E-bound-quant}  and
\eqref{delta-small},  we have
$$
 \exp\Bigg( - \sum_{v \in e}\frac{d_{I_m}(v)}{P_{m-1}(v)}  \Bigg) \geq \exp( - A D )
 \ge \delta^{1/10^{m+2}},
$$
so it suffices to show that
\begin{equation}\label{eye-quant}
 \E( Y( \mathbf{W} ) | e \subset \mathbf{W} ) =
\(1 + O(\delta^{ \frac{1}{9 \times 10^m} } )\)
 \exp\Bigg( - \sum_{v \in e}\frac{d_{I_m}(v)}{P_{m-1}(v)}  \Bigg)  + O( \delta^{ \frac{1}{8 \times 10^m} } ).
\end{equation}
Suppose that $W$ is in the essential range of $\mathbf{W}$ with $e
\subset W$.  As the $\mathbf{e}'_i$, $i \in I_m$, are jointly
conditionally independent on the event $\mathbf{W}=W$, we may factor
$Y(W)$ as
$$ Y(W) = \prod_{i \in I_m} (1 - \PR( e \cap \mathbf{e}'_i \neq
\emptyset | \mathbf{W} = W ) ).$$
Since $\bfe_i'=\emptyset$ if $F_i(W)$ fails, we may write 
\[
 Y(W) = \prod_{i \in I_m} (1 - 1_{F_i(W)} \PR( e \cap \mathbf{e}'_i \neq
\emptyset | \mathbf{W} = W ) ).
\]
Now suppose that $i\in I_m$ and that $W$ is such that  $F_i(W)$ holds.
From the union bound we have
$$  \PR( e \cap \mathbf{e}'_i \neq \emptyset | \mathbf{W} = W ) \leq \sum_{v \in e}  \PR( v \in \mathbf{e}'_i | \mathbf{W} = W ).$$
From \eqref{qorn}, \eqref{xiw}, and \eqref{pjei-size-quant}, we have
$$ \PR( v \in \mathbf{e}'_i | \mathbf{W} = W ) =\sum_{\tei:v\in \tei}
\PR(\bfe'_i=\tei | \bfW=W) \ll \kappa^{-r} \PR( v \in \mathbf{e}_i ), $$
and hence by \eqref{q-form-quant}, \eqref{E-bound-quant}
$$ \PR( e \cap \mathbf{e}'_i \neq \emptyset | \mathbf{W} = W ) \ll A \kappa^{-r} \delta / (\# I_m)^{1/2}.$$
From Taylor's expansion, we then have
$$
1 - 1_{F_i(W)}\PR( e \cap \mathbf{e}'_i \neq \emptyset | \bfW = W )  =
\exp \( -  1_{F_i(W)}\PR( e \cap \mathbf{e}'_i \neq \emptyset | \bfW =
W ) + O( (A \kappa^{-r} \delta)^2 / \# I_m ) \).
$$
From \eqref{delta-small-cor}, we have $(A \kappa^{-r} \delta)^2 = O( \delta^{ \frac{1}{9 \times 10^m} } )$, and so
$$ Y(W) = (1 + O(\delta^{ \frac{1}{9 \times 10^m} } )) \exp\left( -1_{F_i(W)}
\sum_{i \in I_m}  \PR( e \cap \mathbf{e}'_i \neq \emptyset |
\mathbf{W} = W ) \right).$$
Next, we apply inclusion-exclusion to write
$$
\PR( e \cap \mathbf{e}'_i \neq \emptyset | \mathbf{W} = W )  = \sum_{v \in e} \PR( v \in \mathbf{e}'_i | \mathbf{W}=W) - O\( \sum_{v,w \in e: v \neq w}
\PR( v, w \in \mathbf{e}'_i | \mathbf{W}=W) \).$$
The error term is handled by summing  \eqref{qorn} over all 
$\tilde{e}_i$ with $v,w\in\tilde{e}_i$, and using
\eqref{xiw} and \eqref{pjei-size-quant}.  For distinct  $v,w \in e$, we have
$$
\PR( v, w \in \mathbf{e}'_i | \mathbf{W}=W )=\sum_{\tilde{e}_i:v,w\in\tilde{e}_i}
\PR(\mathbf{e}'_i=\tilde{e}_i |  \mathbf{W}=W ) \ll \kappa^{-r}
\sum_{\tilde{e}_i:v,w\in\tilde{e}_i} \PR(\mathbf{e}_i=\tilde{e}_i)
 \ll \kappa^{-r} \PR( v, w \in \mathbf{e}_i ).$$
Hence by \eqref{q-form-2-quant}, \eqref{E-bound-quant}
$$ \sum_{i \in I_m} \sum_{\substack{v,w \in e \\ v \neq w}} \PR( v, w \in
\mathbf{e}'_i | \mathbf{W}=W ) \ll \kappa^{-r} A^2 
\max_{\substack{v,w\in e \\ v\ne w}} \; \sum_{i\in I_m} \PR(v,w\in \mathbf{e}_i)
  \ll A^2 \kappa^{-r} \delta.$$
From \eqref{delta-small-cor}, we have $A^2 \kappa^{-r} \delta = O(
\delta^{ \frac{1}{9 \times 10^m} } )$, and so
$$ Y(W) = (1 + O(\delta^{ \frac{1}{9 \times 10^m} } )) \exp\left( -1_{F_i(W)}
\sum_{v \in e} \sum_{i \in I_m}  \PR( v \in \mathbf{e}'_i | \mathbf{W}
= W ) \right).$$

Also we trivially have $0 \leq Y(W) \leq 1$.  Thus, to prove \eqref{eye-quant}, it suffices to show that
$$
\sum_{v \in e} \sum_{i \in I_m} 1_{F_i(\bfW)}\PR( v \in \mathbf{e}'_i | \mathbf{W} ) = 
\sum_{v \in e} \frac{d_{I_m}(v)}{P_{m-1}(v)} + O(\delta^{ \frac{1}{9 \times 10^m} } )
$$
with probability $1 - O( \delta^{ \frac{1}{8 \times 10^m} } )$,
conditionally on the event that $e \subset \mathbf{W}$.  
From \eqref{E-bound-quant}, \eqref{delta-small-cor}, and the union bound,
it thus suffices to show that for each $v \in e$, one has
\be\label{Pvei}
\sum_{i \in I_m}1_{F_i(\bfW)} \PR( v \in \mathbf{e}'_i | \mathbf{W} ) = \frac{d_{I_m}(v)}
{P_{m-1}(v)} + O(\delta^{ \frac{1}{8 \times 10^m} } )
\ee
with probability $1 - O( \delta^{ \frac{1}{7 \times 10^m} } )$,
conditionally on the event that $e \subset \mathbf{W}$.   

We have
\be\label{Pvei2}
1_{F_i(\bfW)}\PR( v \in \mathbf{e}'_i | \mathbf{W} ) = \frac{1_{F_i(\bfW)}}{X_i(\bfW)}
\sum_{\tei:v\in \tei} 1_{\tei \subset \bfW} \frac{\PR(\bfe_i=\tei)}{P_{m-1}(\tei)}
\ee
and, by \eqref{xiw},
\be\label{Xi-decomp}
\frac{1_{F_i(\bfW)}}{X_i(\bfW)} = 1 + O((1-1_{F_i(\bfW)})+\delta^{\frac{1}{3\times 10^m}}).
\ee
Upon inserting \eqref{Pvei2} and \eqref{Xi-decomp} into \eqref{Pvei},
the left side of \eqref{Pvei} breaks into two pieces, a ``main term''
and an ``error term''.

Let us first estimate the error
$$
\sum_{i \in I_m} O\(1-1_{F_i(\bfW)} + \delta^{\frac{1}{3\times 10^m}}\) \sum_{\tei:v\in \tei}
1_{\tei \subset \bfW} \frac{\PR(\bfe_i=\tei)}{P_{m-1}(\tei)}.
$$ 
By \eqref{pjei-size-quant} and \eqref{epsqj-quant}, we may bound this by
$$
O(\kappa^{-r}) \sum_{i \in I_m} (1-1_{F_i(\mathbf{W})}
+\delta^{\frac{1}{3\times 10^m}})) \PR(v \in
\mathbf{e}_i) = O(\kappa^{-r}) d_{I_m}(v)(1-1_{F_i(\mathbf{W})}+ \delta^{\frac{1}{3\times 10^m}}).
$$ 
By Lemma \ref{xconc-quant}, the 
unconditional expectation of this random variable is
$$ O\( \kappa^{-r} \delta^{\frac{1}{3 \times 10^m}} d_{I_m}(v) \).$$
Thus, by \eqref{ew}, the conditional expectation of this random variable to the event $e \subset \mathbf{W}$ is
$$\ll \kappa^{-r} \delta^{\frac{1}{3 \times 10^m}}
\frac{d_{I_m}(v)}{P_{m-1}(e)} \ll \kappa^{-A} \delta^{\frac{1}{3 \times 10^m}}.$$
By \eqref{delta-small-cor}, this can be bounded by
$$ O( \delta^{\frac{2}{7 \times 10^m}}  ).$$
Thus, by Markov's inequality, this error is $O( \delta^{ \frac{1}{7
    \times 10^m} } )$ with probability $1 - O( \delta^{ \frac{1}{7
    \times 10^m} } )$, conditionally on $e \subset \mathbf{W}$.  By
the triangle inequality, it thus suffices to show that the main term satisfies
$$
\sum_{i \in I_m}  \sum_{\tei:v\in \tei}
1_{\tei \subset \bfW} \frac{\PR(\bfe_i=\tei)}{P_{m-1}(\tei)}
= \frac{d_{I_m}(v)}{P_{m-1}(v)} + O(\delta^{ \frac{1}{8 \times 10^m} } )
$$ 
with probability $1 - O( \delta^{ \frac{1}{7 \times 10^m} } )$, conditionally on $e \subset \mathbf{W}$.  

Applying Lemma \ref{cheb} (and \eqref{epsbound-quant}, \eqref{delta-small}), it suffices to show that
\begin{equation}\label{z-1}
\E\( \sum_{i \in I_m} \sum_{\tei:v\in \tei}
1_{\tei \subset \bfW} \frac{\PR(\bfe_i=\tei)}{P_{m-1}(\tei)}
 \Big| e \subset \mathbf{W}\) = 
 \frac{d_{I_m}(v)}{P_{m-1}(v)} + O(\delta^{ \frac{1}{2 \times 10^m} } )
\end{equation}
and
\begin{equation}\label{z-2}
\E\Bigg( \sum_{i,i' \in I_m}  \ssum{\tei:v\in \tei \\ \hei:v\in\hei}
1_{\tei \subset \bfW} \frac{\PR(\bfe_{i}=\tei)}{P_{m-1}(\tei)} 
1_{\hei \subset \bfW} \frac{\PR(\bfe_{i'}=\hei)}{P_{m-1}(\hei)} 
\Big| e \subset \mathbf{W}\Bigg) =  \pfrac{d_{I_m}(v)}{P_{m-1}(v)}^2
 + O(\delta^{ \frac{1}{2 \times 10^m} } ). 
\end{equation}

We begin with \eqref{z-1}.  For any given $i \in I_m$, we have from \eqref{hyp}, \eqref{e2r} that
$$ \frac{\PR (e \cup \tei \subset \bfW )}{\PR(e\subset \bfW)} =
(1 + O( \delta^{1/10^{m}} )) \frac{P_{m-1}(   e \cup \tei )}{P_{m-1}(e)}.$$
By \eqref{pje-def}, we can rewrite
$$ \frac{P_{m-1}(  e\cup \tei )}{P_{m-1}(\tei) P_{m-1}(e)} = \frac{1}{P_{m-1}(v) P_{m-1}(\tei \cap e \backslash \{v\})}.$$
By \eqref{idem}, we may thus write the left-hand side of \eqref{z-1} as
$$
\sum_{i\in I_m} \sum_{\tei: v\in \tei}
\frac{\PR(\bfe_i=\tei)}{P_{m-1}(\tei)}
\frac{\PR(e\cup\tei\subset\bfW)}{\PR(e\subset\bfW)} = 
\frac{1 + O( \delta^{1/10^{m}} )}{P_{m-1}(v)} \sum_{i \in I_m}
\sum_{\tei: v\in \tei} \frac{\PR(\bfe_i=\tei)}{P_{m-1}(\tei \cap e \backslash \{v\})}.
$$
As in the proof of Lemma \ref{xconc-quant}, the denominor is 1
unless $\tei$ and $e \backslash \{v\}$ have a common element, in which case
the denominator is $\ge \kappa^{r}$ by \eqref{pjei-size-quant}.   Thus
$$ \frac{1}{P_{m-1}(\tei \cap e \backslash \{v\})} = 1 + O\Big( \kappa^{-r} 
\sum_{w \in e \backslash \{v\}} 1_{w \in \tei} \Big).$$
From \eqref{epsqj-quant} one has
$$ \sum_{i \in I_m}  \sum_{\tei: v\in \tei} \PR(v\in\bfe_i)  = d_{I_m}(v),$$
and from \eqref{q-form-2-quant} one has
$$ \sum_{i \in I_m}\PR(v,w\in \bfe_i) \leq \delta$$
for all $w \neq v$. Therefore, by \eqref{E-bound-quant}, 
the left side of \eqref{z-1} is
\[
\frac{1 + O( \delta^{1/10^{m}} )}{P_{m-1}(v)} \( d_{I_m}(v) + O(A\delta \kappa^{-r}) \).
\]
The claim now follows from \eqref{delta-small-cor} and \eqref{epsbound-quant}.

Now we prove \eqref{z-2}.  For any $i,i' \in I_m$, we have from \eqref{hyp}, \eqref{e2r} that
$$ \frac{\PR(\tei \cup \hei \cup e \subset \bfW )}{\PR(e\subset \bfW)} =
(1 + O( \delta^{1/10^{m}} )) \frac{P_{m-1}( \tei \cup \hei \cup e)}{P_{m-1}(e)},$$
so we are reduced (after applying \eqref{epsbound-quant}, \eqref{delta-small-cor}) to showing that
$$ \sum_{i,i' \in I_m} \ssum{\tei:v\in \tei \\ \hei:v\in\hei}
\PR(\bfe_{i}=\tei) \PR(\bfe_{i'}=\hei) \frac{P_{m-1}(v)^2 P_{m-1}( \tei \cup
  \hei \cup e)}{P_{m-1}(\tei) P_{m-1}(\hei) P_{m-1}(e)}
 =  d_{I_m}(v)^2 + O(\delta^{ \frac{1}{10^m} } ).$$
The quantity $\frac{P_{m-1}(v)^2 P_{m-1}( \tei \cup \hei \cup e
  )}{P_{m-1}(\tei) P_{m-1}(\hei) P_{m-1}(e)}$ is equal to $1$ when
$\tei,\hei, e$ only intersect at $v$, and is $O(\kappa^{-2r})$ otherwise
thanks to \eqref{pjei-size-quant}.  Hence we may
estimate this ratio by
$$ 1 + O\( \kappa^{-2r} \sum_{w \in e \backslash \{v\}} \(1_{w \in \tei}
+ 1_{w \in \hei}\) \) + O\( \kappa^{-2r} \sum_{w \in \tei \backslash
\{v\}} 1_{w \in \hei}\).$$
From \eqref{epsqj-quant} one has
$$ \sum_{i,i' \in I_m} \PR(v\in\bfe_{i})\PR(v\in\bfe_{i'})  = d_{I_m}(v)^2,$$
so from \eqref{delta-small-cor} it suffices to show that
\begin{align}
\sum_{i,i' \in I_m} \sum_{w \in e \backslash \{v\}} \PR\( v \in
\bfe_{i}, v\in \bfe_{i'}, w \in \bfe_{i} \) &\leq DA \delta, \label{a-1} \\
\sum_{i,i' \in I_m} \sum_{w \in e \backslash \{v\}} \PR\( v \in
\bfe_{i}, v\in \bfe_{i'}, w \in \bfe_{i'} \)
 &\leq DA \delta, \label{a-2} \\
\sum_{i,i' \in I_m} \E \left[  1_{v \in
\bfe_{i}, v\in \bfe_{i'}} \(\# (\bfe_{i} \cap \bfe_{i'}) -1 \) \right]
 &\leq Dr \delta. \label{a-3}
\end{align}

For \eqref{a-1}, we use \eqref{epsqj-quant} to write the left-hand side as
$$ d_{I_m}(v) \sum_{w \in e \backslash \{v\}} \sum_{i \in I_m} \PR(
v, w \in \bfe_{i}),$$
which by \eqref{epsbound-quant}, \eqref{E-bound-quant},
\eqref{q-form-2-quant} is bounded by $D A \delta$, as desired.
Similarly for \eqref{a-2}.  For \eqref{a-3}, we take expectations in
$\bfe_{i'}$ first using \eqref{idem},
\eqref{q-form-2-quant} to upper bound the left-hand side of
\eqref{a-3} by
$$ \sum_{i \in I_m} \E \( 1_{v \in \bfe_{i}} \sum_{w \in
  \bfe_{i} \backslash \{v\}}  \delta\),$$
which by \eqref{p-bound-quant}, \eqref{epsqj-quant},
\eqref{epsbound-quant} is bounded by $Dr\delta$, as desired. 
This proves \eqref{z-2}, which implies \eqref{Pvei} and in turn
\eqref{eye-quant}.
The proof of Theorem \ref{packing-quant} is now complete.

\section{Using a sieve weight}\label{sec:weight}

If $r$ is a natural number, an \emph{admissible $r$-tuple} is a tuple $(h_1,\dots,h_r)$ of distinct integers $h_1,\dots,h_r$ that do not cover all residue
classes modulo $p$, for any prime $p$.  For instance, the tuple $(p_{\pi(r)+1},\dots,p_{\pi(r)+r})$ consisting of the first $r$ primes larger than $r$ is an admissible $r$-tuple.

We will establish Theorem \ref{sieve-primes-2} by a probabilistic argument involving a certain weight function, the details of which may be found in the following. 

\begin{thm}[Existence of good sieve weight]\label{weight}
Let $x$ be a sufficiently large real number and let $y$ be any quantity obeying \eqref{ydef}.  Let $\PP, \QQ$ be defined by \eqref{p-def}, \eqref{q-def}.  Let $r$ be a positive integer with
\begin{equation}\label{r-bound}
r_0 \leq r \leq \log^{1/5} x
\end{equation}
for some sufficiently large absolute constant $r_0$, and let $(h_1,\dots,h_r)$ be an admissible $r$-tuple contained in $[2r^2]$.  Then one can find a positive quantity
\begin{equation}\label{alpha-crude}
\tau \geq x^{-o(1)}
\end{equation}
and a positive quantity $u = u(r)$ depending only on $r$ with
\begin{equation}\label{u-bound}
u \asymp \log r
\end{equation}
and a non-negative function $w: \PP \times \Z \to \R^+$ supported on $\PP \times (\Z \cap [-y,y])$ with the following properties:
\begin{itemize}
\item Uniformly for every $p \in \PP$, one has
\begin{equation}\label{wap}
 \sum_{n \in \Z} w(p,n) = \(1 + O\( \frac{1}{\log^{10}_2 x} \)\) 
\tau \frac{y}{\log^r x}.
\end{equation}
\item Uniformly for every $q \in \QQ$ and $i=1,\dots,r$, one has
\begin{equation}\label{wbp}
 \sum_{p \in \PP} w( p, q - h_i p ) = \(1 + O\( \frac{1}{\log^{10}_2 x} \)\) \tau \frac{u}{r} \frac{x}{2 \log^r x}.
\end{equation}
\item Uniformly for every $h = O(y/x)$ that is not equal to any of the $h_i$, one has
\begin{equation}\label{wcp}
\sum_{q \in \QQ} \sum_{p \in \PP} w( p, q - h p ) = O\( \frac{1}{\log^{10}_2 x} \tau \frac{x}{\log^r x} \frac{y}{\log x}\).
\end{equation}
\item Uniformly for all $p \in \PP$ and $n \in \Z$,
\begin{equation}\label{w-triv}
w(p,n) = O( x^{1/3+o(1)} ).
\end{equation}
\end{itemize}
\end{thm}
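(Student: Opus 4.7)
I will construct $w(p,n)$ as a squared Maynard-Tao multidimensional sieve weight, with the prime $p \in \PP$ entering as a common scaling of the admissible shifts. Fix a small constant $\theta \in (0,1/6]$, put $R := x^{\theta}$, let $W := \prod_{p' \le 2r^2} p'$, and fix a smooth symmetric function $F : [0,\infty)^r \to \R$ supported on the simplex $\{t_1+\cdots+t_r \le 1\}$, chosen as in \cite{maynard-dense} to approximately optimise the ratio between the ``prime'' and ``main'' sums of the sieve so as to produce $u \asymp \log r$. Set
\[
\lambda_{d_1,\dots,d_r} := \Bigl(\prod_{i=1}^{r}\mu(d_i)\, d_i\Bigr)\, F\Bigl(\tfrac{\log d_1}{\log R},\dots,\tfrac{\log d_r}{\log R}\Bigr)
\]
(supported on squarefree tuples coprime to $W$ with $d_1\cdots d_r \le R$), and
\[
w(p,n) := 1_{n \in [-y,y]}\prod_{i=1}^{r} 1_{(n+h_i p,\, W)=1}\Bigl(\sum_{\substack{d_i \mid n + h_i p \\ i=1,\dots,r}} \lambda_{d_1,\dots,d_r}\Bigr)^{2}.
\]
The normalising factor $\tau$ emerges as an explicit $F$-integral, and $\tau \ge x^{-o(1)}$ since $F$ is fixed and $R$ is a power of $x$.

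To prove \eqref{wap}, fix $p \in \PP$, expand the square, and for each pair $\vec d, \vec e$ count the $n \in [-y,y]$ in the joint residue class modulo $W \prod_i [d_i,e_i]$; this gives a main term $2y / (W\prod_i [d_i,e_i])$ with error $O(1)$. Passing to the Mellin transform of $F$ as in \cite[\S5]{maynard-gaps} yields an $r$-fold integral identifying the main term with $\tau \cdot y/\log^r x$, while the total $O(1)$-error is $\ll R^2 \log^{O(r)} x \ll x^{1/3}$. Because no equidistribution of primes is needed here, the precision $1+O(1/\log_2^{10}x)$ is controlled purely by the smoothness of $F$ and the size of $R$, both of which are under our control.

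For \eqref{wbp}, fix $q \in \QQ$, index $i$, and substitute $n := q - h_i p$. The divisibility $d_i \mid q$ combined with $d_i \le R < x < q$ forces $d_i = 1$ (and analogously $e_i = 1$). The remaining sum over $p \in \PP$ in arithmetic progressions of moduli $\le W R^2 \le x^{1/2-o(1)}$ is handled by the Bombieri-Vinogradov theorem, yielding an $(r-1)$-fold $F$-integral which, combined with the Maynard-Dense optimisation, is identified with $\tau(u/r)\cdot x/(2\log^r x)$. The off-diagonal bound \eqref{wcp} is proved analogously: since $h \ne h_i$ for every $i$, no $d_j$ is pinned to $1$, so the main term is smaller than in \eqref{wbp} by a further factor of $\log x$ (reflecting the absence of any prime-detection boost), and summing over $q \in \QQ$ and $p \in \PP$ gives a total $\ll \tau \cdot xy / \log^{r+2} x$, which is far stronger than the required bound since $\log x \gg \log_2^{10} x$. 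The pointwise bound \eqref{w-triv} follows from $|\lambda_{\vec d}| \le R\|F\|_\infty$ together with the divisor bound $\prod_i \tau(n+h_i p) \ll x^{o(1)}$, giving $w(p,n) \ll R^2 x^{o(1)} \le x^{2\theta+o(1)} \le x^{1/3+o(1)}$.

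The main technical obstacle is achieving the precision $1+O(1/\log_2^{10}x)$ in \eqref{wap} and \eqref{wbp} uniformly over $p \in \PP$, $q \in \QQ$, and $i$, while $r$ is permitted to grow up to $\log^{1/5}x$ and the optimising $F$ depends on $r$. This requires careful tracking of the Mellin-smoothing errors (which must decay faster than $1/\log_2^{10}x$) and of the Bombieri-Vinogradov errors (which decay as $O(x^{-\eta})$ and are thus easily absorbed). The quantitative Maynard-style sieve estimates in \cite{maynard-dense} supply the uniformity needed.
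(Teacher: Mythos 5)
Your construction is essentially the one the paper uses: a Maynard--Tao multidimensional sieve weight attached to the admissible system of linear forms $n \mapsto n + h_i p$, with \eqref{wap} coming from the first-moment sieve asymptotic, \eqref{wbp} from the prime-detecting asymptotic after pinning $d_i = e_i = 1$ (the paper packages this step by re-sieving in the variable $p$ with the forms $\tilde L_{q,i,j}(n) = q + (h_j-h_i)n$, which is the same substitution you make), \eqref{wcp} from the upper bound for a form outside the system, and \eqref{w-triv} from the trivial bound $|\lambda_{\vec d}| \ll R\, x^{o(1)}$ together with the divisor bound. Your error analysis and parameter choices ($R$ a small power of $x$, moduli up to $WR^2 \le x^{1/2-\eps}$, uniformity in $r \le \log^{1/5}x$ supplied by the quantitative estimates of \cite{maynard-dense}) are all consistent with what the paper does via its black-box Theorem \ref{may}.

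The one substantive deviation is your appeal to the Bombieri--Vinogradov theorem for \eqref{wbp}. The standard Bombieri--Vinogradov theorem rests on Siegel--Walfisz and is therefore \emph{ineffective}, whereas Theorem \ref{mainthm} explicitly claims an effective implied constant, and the paper goes out of its way to preserve this: it invokes the Landau--Page theorem (Lemma \ref{page} and Corollary \ref{page-cor}) to isolate a single possible exceptional modulus, extracts from it a quantity $B$ (equal to $1$ or a single prime), restricts all sieve moduli $d_1\cdots d_k$ to be coprime to $B$, and proves an effective level-of-distribution estimate for moduli coprime to $B$ (Lemma \ref{landau-page}). Your weight, as written, does not carry this coprimality restriction, so if effectivity is desired your argument has a genuine gap at exactly this point; repairing it requires threading the extra factor $B$ through the construction (and through the normalisations $\tau$ and $u$, which acquire harmless factors $B/\varphi(B) \asymp 1$), precisely as in Sections 6--7 of the paper. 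If one is content with ineffective constants, your route is fine. A second, minor point: you should record explicitly that the singular series of the system $\{n+h_ip\}_i$ is independent of $p$ up to a factor $1+O(k/x)$ (because $\omega_{{\mathcal L}_p}(s)$ depends on $p$ only at $s=p > R$), since this is what allows a single $\tau$ to serve uniformly for all $p \in \PP$ in \eqref{wap} and allows the weight at $q-h_ip$ to be re-read as a sieve in $p$ in \eqref{wbp}; with your $p$-independent choice of $\lambda_{\vec d}$ this is easy, but it is a required step rather than an automatic one.
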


\begin{remark} One should think of $w(p,n)$ as being a smoothed out indicator function for the event that $n+h_1p,\dots,n+h_r p$ are all almost primes in $[y]$.  As essentially discovered in \cite{maynard-gaps}, by choosing the smoothing correctly, one can ensure that approximately $\log r$ of the elements of this tuple $n+h_1p,\dots,n+h_r p$  are genuinely prime rather than almost prime, when weighted by $w(p,n)$; this explains the presence of the bounds \eqref{u-bound}.  The estimate \eqref{wcp} is not, strictly speaking, needed for our current argument; however, it is easily obtained by our methods, and will be of use in a followup work \cite{FMT} to this paper in which the analogue of Theorem \ref{mainthm} for chains of large gaps is established.
\end{remark}

The proof of this theorem will rely on the estimates for multidimensional prime-detecting sieves established by the fourth author in \cite{maynard-dense}, and will be the focus of subsequent sections.  In this section, we show how Theorem \ref{weight} implies Theorem \ref{sieve-primes-2}.

Let $x, c, y, z, \cS, \PP, \QQ$ be as in Theorem \ref{sieve-primes-2}.
We set $r$ to be the maximum value permitted by Theorem \ref{weight}, namely
\begin{equation}\label{r-def}
 r := \lfloor \log^{1/5} x \rfloor 
\end{equation}
and let $(h_1,\dots,h_r)$ be the admissible $r$-tuple consisting of the first $r$ primes larger than $r$, thus $h_i = p_{\pi(r)+i}$ for $i=1,\dots,r$.  From the prime number theorem we have $h_i = O( r \log r )$ for $i=1,\dots,r$, and so we have $h_i \in [2r^2]$ for $i=1,\dots,r$ if $x$ is large enough (there are
many other choices possible, e.g. $(h_1,\ldots,h_r)=(1^2,3^2,\ldots,(2r-1)^2)$).  We now invoke Theorem \ref{weight} to obtain quantities $\tau,u$ and a weight $w: \PP \times \Z \to \R^+$ with the stated properties.

For each $p \in \PP$, let $\tilde{\mathbf{n}}_p$ denote the random integer with probability density
$$ \PR( \tilde{\mathbf{n}}_p = n ) := \frac{w(p,n)}{  \sum_{n' \in \Z} w(p,n') }$$
for all $n \in \Z$ (we will not need to impose any independence conditions on the $\tilde{\mathbf{n}}_p$).  From \eqref{wap}, \eqref{wbp} we have
\begin{equation}\label{wbp-diff}
\sum_{p \in \PP} \PR( q = \tilde{\mathbf{n}}_p + h_ip ) = \(1 + O\( \frac{1}{\log^{10}_2 x} \)\) \frac{u}{r} \frac{x}{2y} \qquad (q\in \QQ,1\le i\le r).
\end{equation}
Also, from \eqref{wap}, \eqref{w-triv}, \eqref{alpha-crude} one has
\begin{equation}\label{w-triv-diff}
 \PR( \tilde{\mathbf{n}}_p = n ) \ll x^{-1/2 - 1/6 +o(1)}
\end{equation}
for all $p \in \PP$ and $n \in \Z$.

We choose the random vector
$\vec{\mathbf{a}} := (\mathbf{a}_s \mod s)_{s \in \cS}$ by selecting each $\mathbf{a}_s \mod s$ uniformly at random from $\Z/s\Z$, independently in $s$ and independently of the $\tilde{\mathbf{n}}_p$.
The resulting sifted set $S(\vec{\mathbf{a}})$ is a random periodic subset of $\Z$ with density
$$ \sigma := \prod_{s \in \cS} \(1 - \frac{1}{s}\).$$
From the prime number theorem (with sufficiently strong error term), 
\eqref{zdef} and \eqref{s-def},
\[
\sigma = \(1 + O\(\frac{1}{\log^{10}_2 x}\)\) \frac{\log(\log^{20} x)}{\log z} \\
= \(1 + O\(\frac{1}{\log^{10}_2 x}\)\) \frac{80 \log_2 x}{\log x \log_3 x / \log_2 x},
\]
so in particular we see from \eqref{ydef} that
\begin{equation}\label{gamma-y}
\sigma y = \(1 + O\(\frac{1}{\log^{10}_2 x}\)\) 80 c x \log_2 x.
\end{equation}
We also see from \eqref{r-def} that
\begin{equation}\label{gamma-small}
\sigma^r = x^{o(1)}.
\end{equation}

We have a useful correlation bound:

\begin{lem}\label{gamma-cor}  Let $t \le \log x$ be a natural number, and let $n_1,\dots,n_t$ be distinct integers of magnitude $O(x^{O(1)})$.  Then one has
$$ \PR( n_1,\dots,n_t \in S(\vec{\mathbf{a}}) ) = \(1 + O\pfrac{1}{\log^{16} x}\) \sigma^t.$$
\end{lem}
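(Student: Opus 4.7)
The plan is to exploit the independence of the $\mathbf{a}_s$ across $s \in \cS$. Writing $r_s := \#\{n_i \bmod s : 1 \le i \le t\}$ for the number of distinct residues of $n_1,\dots,n_t$ modulo $s$, the event $\{n_1,\dots,n_t \in S(\vec{\mathbf{a}})\}$ factors as
\[
\PR(n_1,\dots,n_t \in S(\vec{\mathbf{a}})) = \prod_{s \in \cS} \frac{s - r_s}{s},
\]
since $\mathbf{a}_s$ is uniform on $\Z/s\Z$ and the event requires $\mathbf{a}_s$ to avoid the $r_s$ residues of the $n_i$ modulo $s$. Note $r_s \le t \le \log x < \log^{20} x < s$ for every $s \in \cS$, so each factor is strictly positive.

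Taking the logarithm of the ratio $\PR/\sigma^t$ and Taylor-expanding $\log(1-r_s/s)$ and $\log(1-1/s)$ (valid since $r_s/s, 1/s \le 1/\log^{20} x$), one obtains
\[
\log \frac{\PR(n_1,\dots,n_t \in S(\vec{\mathbf{a}}))}{\sigma^t} = \sum_{s \in \cS} \( \frac{t - r_s}{s} + O\( \frac{t^2}{s^2}\)\).
\]
The error terms sum to $O\bigl(t^2 \sum_{s > \log^{20} x} s^{-2}\bigr) = O(t^2/\log^{20} x) = O(1/\log^{18} x)$ for $t \le \log x$, which is comfortably acceptable.

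The main step is to bound the main term $\sum_{s \in \cS} (t - r_s)/s$. The key combinatorial observation is the inequality $t - r_s \le \#\{(i,j) : 1 \le i < j \le t,\ s \mid n_i - n_j\}$: indeed if the residues of the $n_i$ modulo $s$ form equivalence classes of sizes $c_1,\dots,c_{r_s}$, then $t - r_s = \sum_j (c_j - 1) \le \sum_j \binom{c_j}{2}$, which counts the pairs of collisions. Swapping the order of summation,
\[
\sum_{s \in \cS} \frac{t - r_s}{s} \le \sum_{1 \le i < j \le t} \sum_{\substack{s \in \cS \\ s \mid n_i - n_j}} \frac{1}{s}.
\]
For each pair, $n_i - n_j$ is a nonzero integer of magnitude $x^{O(1)}$, hence has $O(\log x/\log_2 x)$ distinct prime factors by the standard bound $\omega(n) \ll \log n/\log\log n$, and each prime divisor lying in $\cS$ exceeds $\log^{20} x$. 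Therefore the inner sum is $O(1/(\log^{19} x \cdot \log_2 x))$, and summing over the at most $\binom{t}{2} \le \log^2 x$ pairs yields a bound of $O(1/(\log^{17} x \cdot \log_2 x))$.

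Combining the two contributions gives $\log(\PR/\sigma^t) = O(1/\log^{17} x)$, and exponentiating yields the desired estimate $\PR/\sigma^t = 1 + O(1/\log^{16} x)$ with room to spare. The only nontrivial step is the combinatorial bound in the previous paragraph together with the elementary estimate on $\omega$; everything else is routine Taylor expansion and the independence of the $\mathbf{a}_s$.
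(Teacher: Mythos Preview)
Your proof is correct and follows essentially the same approach as the paper: both exploit independence to factor the probability over $s \in \cS$, and both control the discrepancy from $\sigma^t$ by bounding the number of primes $s$ dividing some difference $n_i - n_j$. The only differences are cosmetic---you take logarithms and Taylor-expand once, while the paper separates out the $O(\log^3 x)$ ``bad'' primes and estimates the ratio directly---and you use the slightly sharper bound $\omega(m) \ll \log m/\log_2 m$ where the paper uses the trivial $\omega(m) \ll \log m$.
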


\begin{proof}  For each $s \in \cS$, the integers $n_1,\dots,n_t$ occupy $t$ distinct residue classes modulo $s$, unless $s$ divides one of $n_i-n_j$ for $1 \leq i < j \leq t$.  Since $s \geq \log^{20} x$ and the $n_i-n_j$ are of size $O( x^{O(1)} )$, the latter possibility occurs at most $O( t^2 \log x ) = O( \log^3 x )$ times. Thus the probability that $\mathbf{a}_s \mod s$ avoids all of the $n_1,\dots,n_t$ is equal to $1 - \frac{t}{s}$ except for $O(\log^3 x)$ values of $s$, where it is instead $(1 + O( \frac{1}{\log^{19} x} )) (1-\frac{t}{s})$.  Thus,
\begin{align*}
\PR( n_1,\dots,n_t \in S(\vec{\mathbf{a}}) )&= \(1 + O\pfrac{1}{\log^{19} x} \)^{O( \log^3 x)} \prod_{s \in \cS} \(1 - \pfrac{t}{s} \)  \\
&= \(1 + O\pfrac{1}{\log^{16} x}\) \sigma^t \prod_{s \in \cS} \(1 + O \pfrac{t^2}{s^2} \) \\
&= \(1 + O\pfrac{1}{\log^{16} x}\) \sigma^t. \qedhere
\end{align*}
\end{proof}

Among other things, this gives the claim \eqref{treat}:
 
\begin{cor}\label{s0}  With probability $1-o(1)$, we have
\begin{equation}\label{qqa}
\# (\QQ \cap S(\vec{\mathbf{a}}) ) \sim \sigma \frac{y}{\log x} \sim
 80 c \frac{x}{\log x} \log_2 x.
\end{equation}
\end{cor}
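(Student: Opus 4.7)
The plan is a straightforward second moment argument for the random variable $\mathbf{N} := \#(\QQ \cap S(\vec{\mathbf{a}}))$, using Lemma \ref{gamma-cor} to compute the first and second moments and then applying Chebyshev's inequality. Since $\QQ$ consists of primes in $(x,y]$ with $y = cx\log x \log_3 x/\log_2 x$, the prime number theorem gives
\[
\# \QQ = \pi(y) - \pi(x) \sim \frac{y}{\log x},
\]
because $\log y = \log x + O(\log_2 x) \sim \log x$ and $\pi(x) = O(x/\log x) = o(y/\log x)$.

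First I would compute $\E \mathbf{N}$. By linearity and Lemma \ref{gamma-cor} (with $t=1$, which is trivially applicable since each prime $q \in \QQ$ satisfies $q = O(x^{O(1)})$),
\[
\E \mathbf{N} = \sum_{q \in \QQ} \PR(q \in S(\vec{\mathbf{a}})) = \Bigl(1 + O\bigl(\tfrac{1}{\log^{16} x}\bigr)\Bigr) \sigma \,\# \QQ \sim \sigma \frac{y}{\log x},
\]
which by \eqref{gamma-y} is $\sim 80 c \frac{x}{\log x} \log_2 x$. Note that this quantity grows like $x \log_2 x/\log x$, which tends to infinity.

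Next I would bound the second moment. Splitting diagonal from off-diagonal terms,
\[
\E \mathbf{N}^2 = \sum_{q \in \QQ} \PR(q \in S(\vec{\mathbf{a}})) + \sum_{\substack{q_1,q_2 \in \QQ \\ q_1 \neq q_2}} \PR(q_1, q_2 \in S(\vec{\mathbf{a}})).
\]
The diagonal contribution is exactly $\E \mathbf{N}$. For the off-diagonal terms, Lemma \ref{gamma-cor} with $t = 2$ gives $\PR(q_1,q_2 \in S(\vec{\mathbf{a}})) = (1 + O(1/\log^{16} x)) \sigma^2$, so the off-diagonal sum equals $(1 + O(1/\log^{16} x)) \sigma^2 \# \QQ (\# \QQ - 1)$. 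Combining,
\[
\Var(\mathbf{N}) = \E \mathbf{N}^2 - (\E \mathbf{N})^2 \ll \frac{\sigma^2 (\# \QQ)^2}{\log^{16} x} + \sigma \,\# \QQ = o\bigl( (\E \mathbf{N})^2 \bigr),
\]
where the last step uses that $\E \mathbf{N} \asymp \sigma \# \QQ \to \infty$. Chebyshev's inequality then yields $\mathbf{N} = (1+o(1)) \E \mathbf{N}$ with probability $1 - o(1)$, which is the desired conclusion.

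I do not anticipate any genuine obstacle: Lemma \ref{gamma-cor} gives sharp two-point correlation estimates with error term $O(1/\log^{16} x)$, far better than what Chebyshev requires, and the mean $\E \mathbf{N}$ is large enough that the diagonal contribution to the variance is negligible compared to $(\E \mathbf{N})^2$. The only mild care needed is to verify that the hypotheses of Lemma \ref{gamma-cor} apply, namely that $t = 2 \le \log x$ and that the primes $q \in \QQ$ are of polynomial size $O(x^{O(1)})$, both of which are immediate.
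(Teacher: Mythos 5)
Your proposal is correct and follows essentially the same route as the paper: both compute the first and second moments of $\#(\QQ \cap S(\vec{\mathbf{a}}))$ via Lemma \ref{gamma-cor}, observe that the variance is $o\bigl((\E \mathbf{N})^2\bigr)$, and conclude by Chebyshev's inequality. Your explicit diagonal/off-diagonal split is just an unpacking of the paper's second-moment formula $\sigma\,\#\QQ + \sigma^2(\#\QQ)(\#\QQ-1)$.
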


\begin{proof} 
From Lemma \ref{gamma-cor}, we have
$$ \E \# (\QQ \cap S(\vec{\mathbf{a}}))  = \(1 + O\pfrac{1}{\log^{16} x}\) \sigma \# \QQ$$
and
$$ \E \# \big( (\QQ \cap S(\vec{\mathbf{a}})) \big)^2  = \(1 + O\pfrac{1}{\log^{16} x}\) ( \sigma \# \QQ + \sigma^2 (\# \QQ) (\# \QQ-1)),$$
and so by the prime number theorem we see that the random variable $\# \QQ \cap S(\vec{\mathbf{a}})$ has mean $(1 + o(\frac{1}{\log_2 x})) \sigma \frac{y}{\log x}$ and variance $O\( \frac{1}{\log^{16} x} (\sigma \frac{y}{\log x})^2 \)$.  The claim then follows from Chebyshev's inequality (with plenty of room to spare).
\end{proof}

For each $p \in \PP$, we consider the quantity
\begin{equation}\label{xp-def}
 X_p(\vec{a}) := \PR( \tilde{\mathbf{n}}_p + h_ip \in S(\vec{a}) \text{ for all } i=1,\dots,r ),
\end{equation}
and let $\PP(\vec a)$ denote the set of all the primes $p \in \PP$ such that
\begin{equation}\label{sumn}
X_p(\vec a) = \(1 + O_{\leq}\(\frac{1}{\log^3 x}\)\) \sigma^r.
\end{equation}

In light of Lemma \ref{gamma-cor}, we expect most primes
in $\PP$ to lie in $\PP(\vec{a})$, and this will be confirmed
below (Lemma \ref{smc}).
We now define the random variables $\mathbf{n}_p$ as follows.  Suppose we are in the event $\vec{\mathbf{a}} = \vec a$ for some $\vec a$ in the range of $\vec{\mathbf{a}}$.  If $p \in \PP \backslash \PP(\vec{a})$, we set $\mathbf{n}_p=0$.  Otherwise, if $p \in \PP(\vec{a})$, we define $\mathbf{n}_p$ to be the random integer with conditional probability distribution
\begin{equation}\label{xpa}
 \PR( \mathbf{n}_p = n | \vec{\mathbf{a}} = \vec a ) := \frac{Z_p(\vec{a};n)}
{X_p(\vec a)}, \quad Z_p(\vec{a};n) = 1_{n+h_jp\in
S(\vec{a})\text{ for }j=1,\ldots,r} \PR(\tilde{\mathbf{n}}_p=n),
\end{equation}
with the $\mathbf{n}_p$ ($p\in\PP(\vec{a})$) jointly independent, conditionally on the event $\vec{\mathbf{a}} = \vec a$.  From \eqref{xp-def} we see that these random variables are well defined.


\begin{lem}\label{smc-2}  
With probability $1-o(1)$, we have
\begin{equation}\label{sumno}
\sigma^{-r} \sum_{i=1}^r \sum_{p\in \PP(\vec{\ba})} 
Z_p(\vec{\ba};q-h_ip) = \(1 + O\(\frac{1}{\log^{3}_2 x}\)\) 
\frac{u}{\sigma} \frac{x}{2y}
\end{equation}
for all but at most $\frac{x}{2\log x \log_2 x}$ of the primes $q \in \QQ \cap S(\vec{\mathbf{a}})$.
\end{lem}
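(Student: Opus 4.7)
The plan is a second-moment concentration argument applied to each fixed $q \in \QQ$, followed by Markov's inequality to control the number of ``bad'' $q$. Fix $q \in \QQ$ and define
$$ T_0(q, \vec{a}) := \sigma^{-r} \sum_{i=1}^r \sum_{p \in \PP} Z_p(\vec{a}; q - h_i p), $$
that is, the same quantity but summing over all of $\PP$ rather than $\PP(\vec a)$. We will show that with probability $1 - o(1)$ in $\vec{\mathbf{a}}$, the conditional expectation $\E[T_0(q,\vec{\mathbf{a}}) \mid q \in S(\vec{\mathbf{a}})]$ concentrates around $\frac{u}{\sigma}\frac{x}{2y}$ for all but $o(\frac{x}{\log x \log_2 x})$ choices of $q$; finally we show that replacing $\PP$ by $\PP(\vec{\mathbf{a}})$ in the sum changes it by a negligible amount, using Lemma \ref{smc} and the pointwise bound \eqref{w-triv-diff}.

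For the first moment, factor $Z_p(\vec{a}; q - h_ip) = 1_{q + (h_j-h_i)p \in S(\vec{a}) \text{ for all } j} \cdot \PR(\tilde{\mathbf{n}}_p = q - h_ip)$. The indicator event already contains $q \in S(\vec a)$ (take $j=i$), and the $r$ translates $\{q + (h_j - h_i) p\}_{j=1}^r$ are distinct integers of polynomial size, so Lemma \ref{gamma-cor} gives probability $(1 + O(\log^{-16} x)) \sigma^r$. Summing in $i$ and $p$ and applying \eqref{wbp-diff} yields
$$ \E[ 1_{q \in S(\vec{\mathbf{a}})} T_0(q, \vec{\mathbf{a}})] = \(1 + O(\log_2^{-10} x)\) \frac{ux}{2y}, $$
and since $\PR(q \in S(\vec{\mathbf{a}})) = \sigma (1 + O(\log^{-16} x))$ again by Lemma \ref{gamma-cor}, the conditional mean is $(1 + O(\log_2^{-10} x)) \frac{u}{\sigma}\frac{x}{2y}$. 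For the second moment, expand $T_0(q,\vec{\mathbf{a}})^2 \cdot 1_{q \in S(\vec{\mathbf{a}})}$ as a sum over quadruples $(i_1,p_1,i_2,p_2)$. The resulting expectation over $\vec{\mathbf{a}}$ is dominated by the ``off-diagonal'' quadruples, where the $2r-1$ translates of $q$ appearing are distinct: Lemma \ref{gamma-cor} then gives probability $(1+O(\log^{-16}x))\sigma^{2r-1}$, and a double application of \eqref{wbp-diff} shows the total off-diagonal contribution is $\sigma^{2r-1} \bigl(\frac{ux}{2y}\bigr)^2 (1 + O(\log_2^{-10}x))$, matching the square of the first moment. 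The ``diagonal'' quadruples, where some $(h_{j_1}-h_{i_1})p_1 = (h_{j_2}-h_{i_2})p_2$, fix $p_2$ from $p_1$ up to $O(r^2)$ choices; their contribution is crushed by the pointwise bound \eqref{w-triv-diff} (which saves a factor $x^{-1/2 - 1/6 + o(1)}$ per instance of $\PR(\tilde{\mathbf{n}}_p = \cdot)$), giving a negligible $O(x^{-1/10})$ relative error.

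Combining these two estimates, Chebyshev's inequality shows that for each individual $q \in \QQ$, the event that $q \in S(\vec{\mathbf{a}})$ and $T_0(q,\vec{\mathbf{a}})$ fails to satisfy \eqref{sumno} (with a tolerance of $\log_2^{-3}x$) has probability at most $O(\sigma \log_2^{-4} x)$. Summing over the $O(y/\log x)$ primes $q \in \QQ$, the expected number of bad $q$ is $O(\sigma y / (\log x \log_2^4 x)) = o(x/(\log x \log_2 x))$, and Markov's inequality then gives the desired conclusion with probability $1-o(1)$. Finally, to replace $\PP$ by $\PP(\vec{\mathbf{a}})$, note that
$$ |T_0(q,\vec{\mathbf{a}}) - \text{(target sum)}| \le \sigma^{-r} \sum_{i=1}^r \sum_{p \in \PP \setminus \PP(\vec{\mathbf{a}})} Z_p(\vec{\mathbf{a}};q - h_ip), $$
and bounding each summand trivially using \eqref{w-triv-diff} and the size of $\PP \setminus \PP(\vec{\mathbf{a}})$ (controlled by Lemma \ref{smc}), an averaging argument over $q$ shows this discrepancy is $o(\frac{u}{\sigma}\frac{x}{2y} \log_2^{-3} x)$ for all but $o(\frac{x}{\log x \log_2 x})$ of $q$, absorbing into the error term.

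The main obstacle is the bookkeeping in the second-moment computation: ensuring that the diagonal contributions (coincidences among the $2r-1$ translates of $q$) truly contribute $o(\mu^2 / \log_2^6 x)$, since we need a variance sharp enough to beat the tight target error $O(\log_2^{-3} x)$. Fortunately the admissibility of $(h_1,\dots,h_r)$ together with $h_i = O(r^2) = O(\log^{2/5} x)$ guarantees that the coincidence locus is one-dimensional (in $p_2$ for fixed $p_1$) with only $O(r^2)$ choices, and the pointwise bound \eqref{w-triv-diff} is far stronger than needed, so this obstacle dissolves once the cases are enumerated carefully.
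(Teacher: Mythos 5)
Your treatment of the main term (the sum over all of $\PP$) is essentially the paper's argument: the same first and second moment computations via Lemma \ref{gamma-cor} and \eqref{wbp-diff}, with the diagonal $p_1=p_2$ terms killed by \eqref{w-triv-diff}; you merely organize it as a per-$q$ Chebyshev bound followed by Markov over the count of bad $q$, whereas the paper aggregates the moments over $q\in\QQ\cap S(\vec{\mathbf{a}})$ first. That part is fine.

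The genuine gap is in your final step, the replacement of $\PP$ by $\PP(\vec{\mathbf{a}})$. You propose to bound $\sigma^{-r}\sum_i\sum_{p\in\PP\setminus\PP(\vec{\mathbf{a}})}Z_p(\vec{\mathbf{a}};q-h_ip)$ by ``bounding each summand trivially using \eqref{w-triv-diff} and the size of $\PP\setminus\PP(\vec{\mathbf{a}})$,'' plus an averaging over $q$. This cannot work, because the trivial bound $Z_p(\vec{a};n)\le\PR(\tilde{\mathbf{n}}_p=n)$ discards the indicator $1_{n+h_jp\in S(\vec{a})\ \forall j}$, which is precisely what supplies the factor $\approx\sigma^{r}$ needed to cancel the normalization $\sigma^{-r}=\exp\((1+o(1))\log^{1/5}x\,\log_2 x\)$. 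Concretely, for a single $q$ your bound gives at most $\sigma^{-r}\cdot r\cdot\#(\PP\setminus\PP(\vec{\mathbf{a}}))\cdot x^{-1/2-1/6+o(1)}=x^{1/3+o(1)}$, and even after summing over all $q$ (using $\sum_q\PR(\tilde{\mathbf{n}}_p=q-h_ip)\le 1$) you get at most $\sigma^{-r}\,r\cdot O(x/\log^4 x)$, which exceeds the required total $O\(\log_2^{-3}x\cdot\frac{x}{\log x\log_2 x}\)$ by a factor of roughly $e^{\log^{1/5}x}$. The point is that one cannot recover the $\sigma^{r}$ saving after conditioning on $p\notin\PP(\vec{\mathbf{a}})$, since that event is correlated with $\vec{\mathbf{a}}$ and Lemma \ref{gamma-cor} no longer applies directly. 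The paper circumvents this by a subtraction of first moments: it computes
\[
\E\sum_{n}\sigma^{-r}\sum_{p\in\PP}Z_p(\vec{\ba};n)=\(1+O\(\log^{-16}x\)\)\#\PP
\]
via Lemma \ref{gamma-cor}, and
\[
\E\sum_{n}\sigma^{-r}\sum_{p\in\PP(\vec{\ba})}Z_p(\vec{\ba};n)=\sigma^{-r}\,\E\sum_{p\in\PP(\vec{\ba})}X_p(\vec{\ba})=\(1+O\(\log^{-3}x\)\)\#\PP
\]
via the defining property \eqref{sumn} of $\PP(\vec{a})$ together with Lemma \ref{smc}; subtracting shows the expected total mass on $\PP\setminus\PP(\vec{\ba})$ is $O(x/\log^4 x)$ \emph{after} the $\sigma^{-r}$ normalization, which is then small enough for the Markov/averaging step. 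You need this (or an equivalent) argument; the pointwise bound \eqref{w-triv-diff} alone is insufficient here.
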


Let $\vec{a}$ be good and $q\in \QQ \cap S(\vec{a})$.  Substituting 
 definition \eqref{xpa} into the left hand side of of \eqref{sumno},
using \eqref{sumn}, and observing that $q=\mathbf{n}_p+h_ip$ is only 
possible if $p\in \PP(\vec{\ba})$, we find that
\begin{align*}
\sigma^{-r} \sum_{i=1}^r \sum_{p\in \PP(\vec{a})} 
Z_p(\vec{a};q-h_ip) &= \sigma^{-r}  \sum_{i=1}^r \sum_{p\in
  \PP(\vec{a})} X_p(\vec{a}) \PR(\mathbf{n}_p=q-h_ip |
\vec{\ba}=\vec{a})\\
&=\(1+O\pfrac{1}{\log^3 x}\)  \sum_{i=1}^r \sum_{p\in \PP(\vec{a})}
 \PR(\mathbf{n}_p=q-h_ip | \vec{\ba}=\vec{a})\\
&= \(1+O\pfrac{1}{\log^3 x}\) \sum_{p\in\PP} \PR(q\in
\mathbf{e}_p(\vec{a}) |  \vec{\ba}=\vec{a}),
\end{align*}
where $\mathbf{e}_p(\vec{a}) = \{ \mathbf{n}_p+h_ip:1\le i\le r \}
\cap \QQ \cap S(\vec{a})$ is as defined in Theorem
\ref{sieve-primes-2}.
Relation \eqref{good} (that is, $\vec{\ba}$ is good with probability
$1-o(1)$) follows
upon noting that by \eqref{r-def}, \eqref{u-bound} and \eqref{gamma-y},
$$
C := \frac{u}{\sigma} \ \frac{x}{2y} \asym \frac{1}{c}.
$$

Before proving Lemma \ref{smc-2}, 
we first confirm that $\PP \backslash \PP(\vec{\ba})$ is small with high
probability.

\begin{lem}\label{smc}   
With probability $1-O(1/\log^3 x)$, $\PP(\vec{\mathbf{a}})$ contains
 all but $O( \frac{1}{\log^3 x} \frac{x}{\log x})$ of the primes $p \in \PP$. 
In particular, $\E \# \PP(\vec{\mathbf{a}}) = \# \PP (1+O(1/\log^3 x))$.
\end{lem}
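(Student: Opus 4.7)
The plan is to apply Lemma~\ref{cheb} to each prime $p \in \PP$ individually in order to show $\PR(p \notin \PP(\vec{\mathbf{a}})) \ll 1/\log^{10} x$, and then conclude with a Markov bound. Fix $p \in \PP$ and set $\mathbf{X} = \vec{\mathbf{a}}$, $\mathbf{Y} = \tilde{\mathbf{n}}_p$ (these are independent by construction) together with
\[
F(\vec{\mathbf{a}}, \tilde{\mathbf{n}}_p) := \prod_{i=1}^r 1_{\{\tilde{\mathbf{n}}_p + h_i p \,\in\, S(\vec{\mathbf{a}})\}},
\]
so that $X_p(\vec{\mathbf{a}}) = \E(F \mid \vec{\mathbf{a}})$ is exactly the quantity we need to concentrate.

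To compute the moments I would integrate out $\mathbf{Y}$ first. For any $n$ in the support of $\tilde{\mathbf{n}}_p$, the integers $n + h_1 p, \dots, n + h_r p$ are distinct (since the $h_i$ are) and of magnitude $O(x^{O(1)})$; since $r \le \log x$ by \eqref{r-def}, Lemma~\ref{gamma-cor} gives $\PR(n + h_i p \in S(\vec{\mathbf{a}}) \text{ for all } i) = (1 + O(1/\log^{16} x))\sigma^r$ uniformly in $n$, hence $\E F = (1 + O(1/\log^{16} x))\sigma^r$. For the second moment, let $\tilde{\mathbf{n}}_p'$ be a conditionally independent copy of $\tilde{\mathbf{n}}_p$ over $\vec{\mathbf{a}}$, and let $t(n, n')$ denote the cardinality of $\{n + h_i p\}_{i=1}^r \cup \{n' + h_i p\}_{i=1}^r$; Lemma~\ref{gamma-cor} gives the conditional joint-survival probability $(1 + O(1/\log^{16} x))\sigma^{t(n,n')}$. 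The ``typical'' case $t = 2r$ contributes $(1 + O(1/\log^{16} x)) \sigma^{2r}$. The ``bad'' case $t < 2r$ forces $n - n' = (h_i - h_j) p$ for some $i, j \in \{1,\dots,r\}$, giving $O(r^2)$ linear relations each of which determines $n'$ from $n$; by \eqref{w-triv-diff},
\[
\sum_{\substack{n,n' \\ t(n,n') < 2r}} \PR(\tilde{\mathbf{n}}_p = n)\,\PR(\tilde{\mathbf{n}}_p' = n') \ll r^2 x^{-1/2 - 1/6 + o(1)} = O(x^{-2/3 + o(1)}),
\]
which is negligible compared to $\sigma^{2r} = x^{o(1)}$ by \eqref{gamma-small}. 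Thus $\E F(\vec{\mathbf{a}}, \tilde{\mathbf{n}}_p) F(\vec{\mathbf{a}}, \tilde{\mathbf{n}}_p') = (1 + O(1/\log^{16} x))\sigma^{2r}$.

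Now Lemma~\ref{cheb} with $\alpha = \sigma^r$, $\eps = O(1/\log^{16} x)$, and $\theta = \sigma^r/\log^3 x$ yields $\PR(p \notin \PP(\vec{\mathbf{a}})) \ll 1/\log^{10} x$. Summing over $p \in \PP$ and using $\# \PP \asymp x/\log x$ gives the ``in particular'' clause $\E \# \PP(\vec{\mathbf{a}}) = \# \PP (1 + O(1/\log^{10} x))$, and Markov's inequality applied to $\#(\PP \setminus \PP(\vec{\mathbf{a}}))$ then produces the bound $O((1/\log^3 x)\cdot x/\log x)$ with probability $1 - O(1/\log^7 x)$, which is more than enough. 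The one delicate step is the off-diagonal part of the second moment: had $\tilde{\mathbf{n}}_p$ been more concentrated, the bad pairs could swamp the diagonal and destroy the concentration, but the pointwise upper bound \eqref{w-triv-diff} provides a polynomial saving that comfortably dominates the quasi-polynomial loss $\sigma^{-2r}$.
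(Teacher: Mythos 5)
Your proposal is correct and follows essentially the same route as the paper: apply Lemma \ref{cheb} to each $p\in\PP$ with $\mathbf{X}=\vec{\mathbf{a}}$, $\mathbf{Y}=\tilde{\mathbf{n}}_p$, compute the two moments of $X_p(\vec{\mathbf{a}})$ via Lemma \ref{gamma-cor} by integrating out $\tilde{\mathbf{n}}_p$ first, control the collision terms in the second moment using \eqref{w-triv-diff} and \eqref{gamma-small}, and finish with Markov's inequality. Your bookkeeping (the $O(r^2)$ count of bad pairs, and the exponents $1/\log^{10}x$ per prime versus the paper's stated $1/\log^{6}x$) is if anything slightly sharper than what the paper records, and comfortably suffices for the stated conclusion.
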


\begin{proof}  By linearity of expectation and Markov's inequality, it suffices to show that for each $p \in \PP$, we have $p\in\PP(\vec{\mathbf{a}})$
  with probability $1 - O(\frac{1}{\log^6 x})$.  By Lemma \ref{cheb}, it suffices to show that
\begin{equation}\label{load-1}
\E X_p(\vec{\mathbf{a}}) =
\PR( \tilde{\mathbf{n}}_p + h_ip \in S(\vec{\mathbf{a}}) \text{ for all } i=1,\dots,r ) = \(1 + O\pfrac{1}{\log^{12} x}\) \sigma^r
\end{equation}
and
\begin{equation}\label{load-2}
\E X_p(\vec{\mathbf{a}})^2  =
\PR( \tilde{\mathbf{n}}^{(1)}_p + h_ip, \tilde{\mathbf{n}}^{(2)}_p + h_ip \in S(\vec{\mathbf{a}}) \text{ for all } i=1,\dots,r ) = \(1 + O \pfrac{1}{\log^{12} x}\) \sigma^{2r},
\end{equation}
where $\tilde{\mathbf{n}}^{(1)}_p, \tilde{\mathbf{n}}^{(2)}_p$ are independent copies of $\tilde{\mathbf{n}}_p$ that are also independent of $\vec{\mathbf{a}}$.  

The claim \eqref{load-1} follows from Lemma \ref{gamma-cor}  (performing the conditional expectation over $\tilde{\mathbf{n}}_p$ first).  A similar application of Lemma \ref{gamma-cor} allows one to write the left-hand side of \eqref{load-2} as
$$
\(1 + O\pfrac{1}{\log^{16} x}\) \E \sigma^{\# \{ \tilde{\mathbf{n}}^{(l)}_p + h_ip: i=1,\dots,r; l=1,2\}}.$$
From \eqref{w-triv-diff} we see that the quantity $\# \{ \tilde{\mathbf{n}}^{(l)}_p + h_ip: i=1,\dots,r; l=1,2\}$ is equal to $2r$ with probability $1 - O(x^{-1/2-1/6+o(1)})$, and is less than $2r$ otherwise.  The claim now follows from \eqref{gamma-small}.
\end{proof}

\begin{proof}[Proof of Lemma \ref{smc-2}]
We first show that replacing $\PP(\vec{\ba})$ with $\PP$
has negligible effect on the sum, with probability $1-o(1)$.  
Fix $i$ and susbtitute $n=q-h_ip$.  By Markov's inequality, it
suffices to show that
\be\label{soo-2}
\E \sum_n \sigma^{-r} \sum_{p\in \PP\backslash \PP(\vec{\ba})} Z_p(\vec{\ba};n)
=o\(
\frac{u}{\sigma} \frac{x}{2y}\ \frac{1}{r} \frac{1}{\log_2^3 x}\ \frac{x}{\log x\log_2 x} \).
\ee
By Lemma \ref{gamma-cor}, we have
\begin{align*}
\E\ \sum_{n} \sigma^{-r} \sum_{p \in \PP} Z_p(\vec{\ba};n)
&= \sigma^{-r} \sum_{p \in \PP} \sum_{n}  \PR(\tilde{\mathbf{n}}_p=n)
\PR( n+h_jp\in S(\vec{\ba})\text{ for }j=1,\ldots,r ) \\
&=\(1+O\pfrac{1}{\log^{16} x}\) \# \PP.
\end{align*}
Next, by \eqref{sumn} and Lemma \ref{smc} we have
\begin{align*}
\E\ \sum_{n} \sigma^{-r} &\sum_{p \in \PP(\vec{\ba})}  Z_p(\vec{\ba};n)
= \sigma^{-r} \sum_{\vec{a}} \PR(\vec{\ba}=\vec{a}) \sum_{p\in \PP(\vec{a})} 
X_p(\vec{a}) \\
&=\(1 + O\pfrac{1}{\log^3 x}\) \ 
\E \; \# \PP(\vec{\mathbf{a}}) 
=\(1 + O\pfrac{1}{\log^3 x}\) \# \PP ;
\end{align*}
subtracting, we conclude that the left-hand side of \eqref{soo-2} is
$O(\#\PP/\log^3 x)=O(x/\log^4 x)$. 
The claim then follows from \eqref{ydef} and \eqref{r-bound}.

By \eqref{soo-2}, it suffices to show that with probability $1-o(1)$,
for all but at most $\frac{x}{2\log x\log_2 x}$ primes $q \in \QQ \cap S(\vec{\mathbf{a}})$, one has
\begin{equation}\label{soo}
\sum_{i=1}^r \sum_{p\in \PP} Z_p(\vec{\ba};q-h_ip)
 = \(1 + O_{\le}\(\frac{1}{\log^{3}_2 x}\)\) 
\sigma^{r-1} u \frac{x}{2y}.
\end{equation}

Call a prime $q \in \QQ$ \emph{bad} if $q \in \QQ \cap S(\vec{\mathbf{a}})$ but
\eqref{soo} fails.  
Using Lemma \ref{gamma-cor} and \eqref{wbp-diff}, we have
\begin{align*}
\E \bigg[\sum_{q\in\QQ\cap S(\vec{\mathbf{a}}) } \sum_{i=1}^r \sum_{p \in \PP} Z_p(\vec{\ba};q-h_ip)
\bigg]&=\sum_{q,i,p} \PR( q + (h_j-h_i) p \in S(\vec{\mathbf{a}}) \text{ for all } j=1,\dots,r)\PR(\tilde{\mathbf{n}}_p=q-h_ip)\\
&=\(1+O\pfrac{1}{\log_2^{10} x}\) \frac{\sigma y}{\log x} \ 
\sigma^{r-1} u \ \frac{x}{2y}
\end{align*}
and
\begin{align*}
\E \bigg[ \sum_{q\in\QQ\cap S(\vec{\mathbf{a}}) } \bigg( \sum_{i=1}^r \sum_{p\in\PP} Z_p (\vec{\ba};q-h_ip)\bigg)^2
\bigg]&=\sum_{\substack{p_1,p_2,q \\ i_1,i_2}} \PR( q + (h_j-h_{i_\ell}) p_\ell \in S(\vec{\mathbf{a}}) 
\text{ for } j=1,\dots,r;\ell=1,2) \\
& \qquad \times \PR(\tilde{\mathbf{n}}^{(1)}_{p_1}=q-h_{i_1}p_1) 
\PR(\tilde{\mathbf{n}}^{(2)}_{p_2}=q-h_{i_2}p_2) \\
&=\(1+O\pfrac{1}{\log_2^{10} x}\)  \frac{\sigma y}{\log x} \ 
\(\sigma^{r-1} u \ \frac{x}{2y}\)^2,
\end{align*}
where $(\tilde{\mathbf{n}}^{(1)}_{p_1})_{p_1 \in \PP}$ and $(\tilde{\mathbf{n}}^{(2)}_{p_2})_{p_2 \in \PP}$ are independent copies of $(\tilde{\mathbf{n}}_p)_{p \in \PP}$ over $\vec{\mathbf{a}}$.  In the last step we used the fact that the terms with
$p_1=p_2$ contribute negligibly.

By Chebyshev's inequality (Lemma \ref{cheb}) it follows that the number of
bad $q$ is $\ll \frac{\sigma y}{\log x} \frac{1}{\log_2^3 x} \ll \frac{x}{\log x\log_2^2 x}$ with probability $1-O(1/\log_2 x)$.  This concludes the proof.
\end{proof}

It remains to establish Theorem \ref{weight}.  This is the objective of the remaining sections of the paper.

\section{Multidimensional sieve estimates}\label{sec:sievemulti}

We now recall a technical multidimensional sieve estimate from \cite{maynard-dense} (a minor variant of \cite[Proposition 6.1]{maynard-dense}).  In this section we will follow the notation from \cite{maynard-dense}, which is a little different from that in the rest of this paper, with the exception that we will take the set denoted $\mathcal{P}$ in that paper to be equal to the set $\mathscr{P}$ of all primes from the outset.

A \emph{linear form} will be a function $L: \Z \to \Z$ of the form $L(n) = l_1 n + l_2$ with integer coefficients $l_1,l_2$ and $l_1 \neq 0$. Let ${\mathcal A}$ be a set of integers.  Given a linear form $L(n) = l_1 n + l_2$, we define the sets
\begin{align*}
{\mathcal A}(x) &:= \{ n \in {\mathcal A}: x \leq n \leq 2x\}, \\
{\mathcal A}(x;q,a) &:= \{ n \in {\mathcal A}(x): n \equiv a\ \pmod q\}, \\
{\mathscr P}_{L,{\mathcal A}}(x) &:= L({\mathcal A}(x)) \cap {\mathscr P}, \\
{\mathscr P}_{L,{\mathcal A}}(x; q,a) &:= L({\mathcal A}(x;q,a)) \cap {\mathscr P},
\end{align*}
for any $x > 0$ and congruence class $a \mod q$, and define the quantity
$$ \varphi_L(q) := \varphi(|l_1| q) / \varphi(|l_1|),$$
where $\varphi$ is the Euler totient function.  We recall the standard bounds
\begin{equation}\label{xlog}
 X \geq \varphi(X) \gg \frac{X}{\log_2 X}
\end{equation}
since $\varphi(X)/X$ is smallest when $X$ is composed only of primes $\ll \log{X}$.  Thanks to this bound, most factors of the form $\frac{X}{\varphi(X)}$ appearing below become relatively harmless, and we recommend that they may be ignored for a first reading.

A finite set ${\mathcal L} = \{ L_1,\dots,L_k\}$ of linear forms is said to be \emph{admissible} if $\prod_{i=1}^k L_i(n)$ has no fixed prime divisor; that is, for every prime $p$ there exists an integer $n_p$ such that $\prod_{i=1}^k L_i(n_p)$ is not divisible by $p$.

\begin{definition}\cite{maynard-dense}  Let $x$ be a large quantity, let ${\mathcal A}$ be a set of integers, ${\mathcal L} = \{L_1,\dots,L_k\}$ a finite set of linear forms, and $B$ a natural number. We allow ${\mathcal A}, {\mathcal L}, k, B$ to vary with $x$.  Let $0 < \theta < 1$ be a quantity independent of $x$.  Let ${\mathcal L}'$ be a subset of ${\mathcal L}$.  We say that the tuple $({\mathcal A}, {\mathcal L}, {\mathscr P}, B, x, \theta)$ \emph{obeys Hypothesis 1 at ${\mathcal L}'$} if we have the following three estimates:
\begin{enumerate}
\item[(1)] (${\mathcal A}(x)$ is well-distributed in arithmetic progressions) We have
$$ \sum_{q \leq x^\theta} \max_a \left| \# {\mathcal A}(x;q,a) - \frac{\# {\mathcal A}(x)}{q} \right| \ll \frac{\# {\mathcal A}(x) }{\log^{100k^2} x}.$$
\item[(2)] (${\mathscr P}_{L,{\mathcal A}}(x)$ is well-distributed in arithmetic progressions) For any $L \in {\mathcal L}'$, we have
$$ \sum_{q \leq x^\theta;\,(q,B)=1} \max_{a: (L(a),q)=1} \left| \# {\mathscr P}_{L,{\mathcal A}}(x;q,a) - \frac{\# {\mathscr P}_{L,{\mathcal A}}(x)}{\varphi_L(q)} \right| \ll \frac{\# {\mathscr P}_{L,{\mathcal A}}(x) }{\log^{100k^2} x}.$$
\item[(3)] (${\mathcal A}(x)$ not too concentrated)  For any $q < x^\theta$ and $a \in \Z$ we have
$$ \# {\mathcal A}(x;q,a)  \ll \frac{\# {\mathcal A}(x)}{q}.$$
\end{enumerate}
\end{definition}

In \cite{maynard-dense} this definition was only given in the case ${\mathcal L}'={\mathcal L}$, but we will need the (mild) generalization to the case in which ${\mathcal L}'$ is a (possibly empty) subset of ${\mathcal L}$.

As is common in analytic number theory, we will have to address the possibility of an exceptional Siegel zero.  As we want to keep all our estimates effective, we will not rely on Siegel's theorem or its consequences (such as the Bombieri-Vinogradov theorem).  Instead, we will rely on the Landau-Page theorem, which we now recall.  Throughout, $\chi$ denotes a Dirichlet character.

\begin{lem}[Landau-Page theorem]\label{page}  Let $Q \ge 100$.  Suppose that $L(s,\chi) = 0$ for some primitive character $\chi$ of modulus at most $Q$, and some $s = \sigma + it$.  Then either
$$ 1-\sigma \gg \frac{1}{\log(Q (1+|t|))},$$
or else $t=0$ and $\chi$ is a quadratic character $\chi_Q$, which is unique.  Furthermore, if $\chi_Q$ exists, then its conductor $q_Q$ is square-free apart from a factor of at most $4$, and obeys the lower bound
$$ q_Q \gg \frac{\log^2 Q}{\log^2_2{Q}}.$$
\end{lem}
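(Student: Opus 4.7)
The plan is to follow the classical Landau--Page argument, with care to preserve effectivity and to extract the specific quantitative form of the conductor bound. First I would derive the standard Landau zero-free region by starting from the non-negativity of $3 + 4\cos\theta + \cos 2\theta = 2(1+\cos\theta)^2$. Taking the real part of
\[
 -3\frac{\zeta'}{\zeta}(\sigma) - 4\frac{L'}{L}(\sigma+it,\chi) - \frac{L'}{L}(\sigma+2it,\chi^2)
\]
at $\sigma = 1 + \eta$ for small $\eta > 0$, and invoking the standard bounds $\Re(-\zeta'/\zeta)(\sigma) \le 1/(\sigma-1) + O(1)$ and $\Re(-L'/L)(s,\psi) \ll \log Q(1+|t|)$ for any character $\psi$ of modulus $\le Q$, together with the positivity $\Re \tfrac{1}{s-\rho} \ge 0$ for $\Re s > \Re \rho$ at each hypothetical zero $\rho = \beta + i\gamma$ of $L(\cdot,\chi)$ with $\gamma$ close to $t$, forces $1-\beta \gg 1/\log Q(1+|t|)$. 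The only way the argument can fail is if $L(s,\chi^2)$ has a pole near $s = \sigma + 2it$, which requires $\chi^2$ to be principal (so $\chi$ real) and $t = 0$.

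In the residual case of a real $\chi$ with a real zero, uniqueness follows from Page's device. If two distinct primitive real characters $\chi_1, \chi_2$ of modulus at most $Q$ both admitted real zeros within $c/\log Q$ of $1$, then one would apply the same Landau-style trick to
\[
 \zeta(s)\, L(s,\chi_1)\, L(s,\chi_2)\, L(s,\chi_1 \chi_2),
\]
whose Dirichlet coefficients are non-negative by a direct local check at each prime (the relevant Euler factor is a product of four geometric series whose $\pm 1$ sign pattern always multiplies to $+1$). The resulting inequality at $\sigma$ slightly above $1$ contradicts both zeros being so close to $1$, so at most one exceptional $\chi_Q$ exists, and its exceptional zero is real.

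For the conductor lower bound I would chain three classical effective estimates: (i) the Dirichlet class-number-style bound $L(1,\chi_Q) \gg 1/\sqrt{q_Q}$; (ii) the mean-value bound $L(1,\chi_Q) \ll (1-\beta_Q) \log^C q_Q$ for some small $C$, obtained by writing $L(1,\chi_Q) = L(1,\chi_Q) - L(\beta_Q,\chi_Q) = \int_{\beta_Q}^{1} L'(\sigma,\chi_Q)\, d\sigma$ and controlling $|L'(\sigma,\chi_Q)|$ on $[\beta_Q,1]$ via partial summation and P\'olya--Vinogradov; and (iii) the exceptional-zero assumption $1-\beta_Q \ll 1/\log Q$. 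Together these yield $\sqrt{q_Q} \ll \log^C q_Q \cdot \log Q$, and rearranging (noting that in the only non-trivial range $q_Q \le \log^2 Q$ one has $\log q_Q \ll \log_2 Q$) gives the claimed $q_Q \gg \log^2 Q / \log_2^2 Q$ once the sharpest available form of (ii) is used. The main obstacle is making this last step effective without invoking Siegel's theorem; the key is that the relevant bounds on $L$ and $L'$ in a neighbourhood of $s=1$ can be made completely explicit.

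Finally, the near-squarefreeness is structural: primitive real Dirichlet characters are exactly the Kronecker symbols $\bigl(\tfrac{D}{\cdot}\bigr)$ for a fundamental discriminant $D$, and such a $D$ is either squarefree with $D \equiv 1 \pmod 4$, or of the form $4m$ with $m$ squarefree and $m \equiv 2,3 \pmod 4$. Hence $|D| = q_Q$ is either squarefree or $4$ times a squarefree integer, which is the claimed ``factor of at most $4$'' structure.
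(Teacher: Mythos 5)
Your proposal is correct and is, in substance, the same argument the paper relies on: the paper's ``proof'' simply cites \cite[Chapter 14]{Da} for the classical Landau--Page theorem (zero-free region via $3+4\cos\theta+\cos 2\theta\ge 0$, uniqueness via Page's product $\zeta L(\cdot,\chi_1)L(\cdot,\chi_2)L(\cdot,\chi_1\chi_2)$) and then derives the conductor bound from the effective estimate $1-\beta \gg q^{-1/2}\log^{-2}q$ for a real zero; your steps (i) and (ii) are precisely the standard proof of that estimate, and your fundamental-discriminant observation is the standard source of the ``square-free apart from a factor of $4$'' claim.

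One quantitative caveat. Combining $1-\beta_Q\gg q_Q^{-1/2}\log^{-2}q_Q$ with $1-\beta_Q\ll 1/\log Q$ gives $q_Q^{1/2}\gg \log Q/\log^{2}q_Q$, and in the only nontrivial range $q_Q\le \log^2 Q$ this yields $q_Q\gg \log^2 Q/\log_2^{4}Q$, not $\log^2 Q/\log_2^{2}Q$. Your appeal to ``the sharpest available form of (ii)'' would require $L(1,\chi_Q)\ll(1-\beta_Q)\log q_Q$, i.e.\ $|L'(\sigma,\chi_Q)|\ll\log q_Q$ near $\sigma=1$, which the partial-summation/P\'olya--Vinogradov argument does not give (it gives $\log^2 q_Q$ from $\sum_{n\le q}\frac{\log n}{n}$). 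This discrepancy is present in the paper's own one-line derivation as well, and it is harmless: the only downstream use (Corollary \ref{page-cor}) needs just $\log q_Q\gg\log_2 Q$, which either exponent supplies. So this is a shared cosmetic imprecision in the stated exponent rather than a gap in your argument.
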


\begin{proof}  See e.g. \cite[Chapter 14]{Da}.  The final estimate follows from the bound $1-\beta\gg q^{-1/2}\log^{-2}{q}$ for a real zero $\beta$ of $L(s,\chi)$ with $\chi$ of modulus $q$, which can also be found in \cite[Chapter 14]{Da}. 
\end{proof}

We can then eliminate the exceptional character by deleting at most one prime
factor of $q_Q$.

\begin{cor}\label{page-cor}  Let $Q \ge 100$.  Then there exists a quantity $B_Q$ which is either equal to $1$ or is a prime of size
$$ B_Q \gg \log_2 Q$$
with the property that
$$ 1-\sigma \gg  \frac{1}{\log(Q (1+|t|))}$$
whenever $L(\sigma+it,\chi)=0$ and $\chi$ is a character of modulus at most $Q$ and coprime to $B_Q$.
\end{cor}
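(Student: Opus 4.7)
\medskip

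\noindent\textbf{Proof plan for Corollary \ref{page-cor}.}
The plan is to extract $B_Q$ from the conductor of the exceptional character produced by Lemma \ref{page}, taking advantage of the lower bound on that conductor to guarantee that it has a large prime divisor.

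First, I would apply Lemma \ref{page} directly. This gives the dichotomy: either every primitive character $\chi$ of modulus at most $Q$ already satisfies the zero-free region $1-\sigma \gg 1/\log(Q(1+|t|))$, in which case we simply take $B_Q := 1$ and are done; or there exists an exceptional primitive quadratic character $\chi_Q$ of some conductor $q_Q$, where $q_Q$ is square-free up to a factor of at most $4$ and obeys $q_Q \gg \log^2 Q / \log_2^2 Q$.

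In the exceptional case, the squarefree part $m$ of $q_Q$ satisfies $m \geq q_Q/4 \gg \log^2 Q/\log_2^2 Q$. I would then use the Chebyshev/prime number theorem estimate $\prod_{p \le y} p = e^{(1+o(1))y}$ to argue that if every prime factor of $m$ were at most $y$, then $m \le \prod_{p \le y} p \ll e^{(1+o(1))y}$; combined with the lower bound on $m$ this forces $y \gg 2\log_2 Q - 2\log_3 Q \gg \log_2 Q$. Hence the largest prime factor of $q_Q$ must be $\gg \log_2 Q$, and I choose $B_Q$ to be this prime.

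Finally, I would verify that this choice works. Let $\chi$ be any character of modulus $q \le Q$ with $(q,B_Q)=1$, and let $\chi^*$ (of conductor $q^* \mid q$) be the primitive character inducing $\chi$. In the half-plane $\Re s > 0$ the zeros of $L(s,\chi)$ coincide with those of $L(s,\chi^*)$, since the extra Euler factors $1-\chi^*(p)/p^s$ vanish only on $\Re s \le 0$, so I only need the zero-free region for $\chi^*$. Since $B_Q \mid q_Q$ but $B_Q \nmid q \supseteq q^*$, we cannot have $\chi^* = \chi_Q$, so Lemma \ref{page} gives $1-\sigma \gg 1/\log(Q(1+|t|))$ as required.

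The only step with any subtlety is the prime-factor extraction in the exceptional case, but this is a routine consequence of Chebyshev's $\theta$-function estimate and does not present a real obstacle; the rest is just bookkeeping with the primitive/imprimitive reduction.
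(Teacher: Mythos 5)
Your proposal is correct and follows essentially the same route as the paper: take $B_Q=1$ if no exceptional character exists, and otherwise let $B_Q$ be the largest prime factor of $q_Q$, using the near-squarefreeness of $q_Q$ together with the Chebyshev bound $\prod_{p\le y}p=e^{(1+o(1))y}$ to get $B_Q\gg \log q_Q\gg \log_2 Q$. Your final verification via the primitive/imprimitive reduction is exactly the step the paper leaves implicit in ``and the claim follows,'' so there is nothing to add.
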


\begin{proof}  If the exceptional character $\chi_Q$ from Lemma \ref{page} does not exist, then take $B_Q := 1$; otherwise we take $B_Q$ to be the largest prime factor of $q_Q$.  As $q_Q$ is square-free apart from a factor of at most $4$, we have $\log q_Q \ll B_Q$ by the prime number theorem, and the claim follows.
\end{proof}

We will only need the above definition in the following special case:

\begin{lem}\label{landau-page}  Let $x$ be a large quantity.  Then there exists a natural number $B \le x$, which is either $1$ or a prime, such that the following holds.  Let ${\mathcal A} := \Z$, let $\theta := 1/3$, and let ${\mathcal L} = \{ L_1,\dots,L_k\}$ be a finite set of linear forms $L_i(n) = a_i n + b_i$ (which may depend on $x$) with $k \leq \log^{1/5} x$, $1 \leq |a_i| \le \log x$, and $|b_i| \le x\log^{2}{x}$.  Let $x\le y\le x\log^{2}{x}$, and let ${\mathcal L}'$ be a subset of ${\mathcal L}$ such that $L_i$ is non-negative on $[y,2y]$ and $a_i$ is coprime to $B$ for all $L_i \in{\mathcal L}'$.  Then $({\mathcal A}, {\mathcal L}, {\mathscr P}, B, y, \theta)$ obeys Hypothesis 1 at ${\mathcal L}'$ with absolute implied constants (i.e. the bounds in Hypothesis 1 are uniform over all such choices of $\mathcal{L}$ and $y$).
\end{lem}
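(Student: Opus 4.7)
The plan is to verify the three conditions of Hypothesis 1 for $(\Z, \mathcal{L}, \mathscr{P}, B, y, 1/3)$ at $\mathcal{L}'$. Conditions (1) and (3) are essentially trivial for $\mathcal{A}=\Z$: one has $\#\mathcal{A}(y;q,a) = y/q + O(1)$, so summing the $O(1)$ error over $q\le y^{1/3}$ yields $O(y^{1/3})$, comfortably dominated by $\#\mathcal{A}(y)/\log^{100 k^2} y$ since the constraint $k \leq \log^{1/5} x$ forces $\log^{100k^2} y = \exp\bigl(O(\log^{2/5} x \cdot \log_2 x)\bigr) = y^{o(1)}$. Condition (3) follows from the same counting. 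The entire content of the lemma therefore lies in (2), the equidistribution in arithmetic progressions of primes of the form $L(n)$ for each $L \in \mathcal{L}'$.

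For (2), I would set $Q := (x \log^2 x)^{1/3}$ and apply Corollary \ref{page-cor} to produce $B := B_Q$, so that $B \leq Q \leq x$ (for $x$ large), $B$ is either $1$ or a prime of size $\gg \log_2 x$, and every Dirichlet $L$-function $L(s,\chi)$ with $\chi$ of modulus $\leq Q$ and coprime to $B$ is zero-free in the classical region $\sigma \geq 1 - c/\log(Q(1+|t|))$. Fix $L(n) = a_L n + b_L \in \mathcal{L}'$ with $(a_L, B) = 1$, $1 \leq |a_L| \leq \log x$, and $L \geq 0$ on $[y,2y]$. The count $\#\mathscr{P}_{L,\mathcal{A}}(y;q,a)$ equals the number of primes $p$ in the interval $L([y,2y])$ (which has length $\asymp |a_L|\, y$) lying in the arithmetic progression $L(a) \pmod{|a_L|q}$, up to $O(1)$ exceptions coming from primes dividing $a_L$. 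The composite modulus $|a_L|q$ is at most $y^{1/3}\log x$ and, since $(a_L,B) = (q,B) = 1$, is coprime to $B$; similarly the coprimality $(L(a),q)=1$ translates (modulo harmless small-prime losses) to $(L(a), |a_L|q) = 1$.

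Condition (2) therefore reduces, uniformly in $L \in \mathcal{L}'$, to an effective Bombieri--Vinogradov estimate of the form
\[
\sum_{\substack{q' \leq Y^{1/2-\varepsilon} \\ (q',B)=1}} \max_{(a',q')=1} \Bigl| \pi(Y; q', a') - \frac{\pi(Y)}{\varphi(q')} \Bigr| \ll Y \exp\bigl(-c\sqrt{\log Y}\bigr),
\]
for $Y \asymp |a_L|\, y$, where the right-hand side handily beats $\#\mathscr{P}_{L,\mathcal{A}}(y)/\log^{100k^2} y$ since $100k^2 \leq 100 \log^{2/5} x$. Such an estimate follows from Vaughan's identity and the Gallagher large sieve, using the explicit formula together with the zero-free region supplied by Corollary \ref{page-cor}: once the possible exceptional quadratic character has been removed (by insisting that $B \mid q'$ is impossible, which is exactly the coprimality $(q',B)=1$), every relevant $L$-function has a standard zero-free region, and all constants are effective. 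The main obstacle -- indeed the whole point of introducing the modulus $B$ -- is precisely the potential Siegel zero; once it is excluded by the coprimality hypothesis, the rest of the proof is well-known effective analytic number theory, and the uniformity over $L$ follows because the relevant bounds depend only on $|a_L| \leq \log x$ and the length $Y$.
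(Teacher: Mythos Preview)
Your strategy coincides with the paper's: parts (1) and (3) of Hypothesis~1 are trivial for $\mathcal A=\Z$, and part~(2) is reduced to an effective Bombieri--Vinogradov theorem with the exceptional modulus removed via Corollary~\ref{page-cor}. The reduction you give from $\#\mathscr P_{L,\mathcal A}(y;q,a)$ to prime counting modulo $|a_L|q$ is correct and is the content the paper leaves implicit.

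There is, however, a genuine gap in your choice of $Q$. You take $Q=(x\log^2 x)^{1/3}\asymp y^\theta$ so that all moduli $q\le y^\theta$ are covered by Corollary~\ref{page-cor}. But Corollary~\ref{page-cor} only yields the zero-free region $1-\sigma\gg 1/\log(Q(1+|t|))$, so with your $Q$ the width of the region (for bounded $t$) is only $\gg 1/\log x$. Plugging this into the explicit formula for $\psi(z,\chi)$ with $z\asymp x$ saves merely a bounded factor, not the $\exp(-c\sqrt{\log z})$ that you claim and that is needed to beat $\log^{100k^2}y$. The point is subtle: the exceptional character $\chi_Q$ that your $B$ removes is the one violating the region of width $c/\log Q\asymp 1/\log x$; there may still be a primitive real character of small conductor $\le \exp(c\sqrt{\log x})$ with a real zero at distance between $c/\log x$ and $c/\sqrt{\log x}$ from $1$, which is not $\chi_Q$ and hence not eliminated by $(q,B)=1$, and which wrecks the Siegel--Walfisz input for small moduli.

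The paper instead takes $Q=\exp(c_1\sqrt{\log x})$. Then Corollary~\ref{page-cor} gives width $\gg 1/\sqrt{\log x}$ for all primitive $\chi$ of conductor $\le Q$ coprime to $B$, whence $|\psi(z,\chi)|\ll z\exp(-c'\sqrt{\log x})$ uniformly for such $\chi$. Inserting this into the standard Bombieri--Vinogradov machinery (as in Davenport, Chapter~28) handles the small-modulus range, while moduli between $Q$ and $y^{1/3}$ are dealt with by the large sieve, which needs no zero information at all. As a bonus one gets $B\le Q=x^{o(1)}$, which is what is actually used later in Section~\ref{sec:verification}. Replacing your $Q$ by $\exp(c_1\sqrt{\log x})$ repairs the argument; the rest of your write-up is fine.
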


\begin{proof}
Parts (1) and (3) of Hypothesis 1 are easy; the only difficult verification is (2).  We apply Corollary \ref{page-cor} with $Q := \exp( c_1 \sqrt{\log x} )$ for some small absolute constant $c_1$ to obtain a quantity $B := B_Q$ with the stated properties.  By the Landau-Page theorem (see \cite[Chapter 20]{Da}), we have that if $c_1$ is sufficiently small then we have the effective bound
\begin{equation}
\phi(q)^{-1}\sideset{}{^*}\sum_{\chi}|\psi(z,\chi)|\ll x\exp(-3c\sqrt{\log{x}})
\end{equation}
for all $1<q<\exp(2c\sqrt{\log{x}})$ with $(q,B)=1$ and all $z\le x\log^4{x}$. Here the summation is over all primitive $\chi\mod{q}$ and $\psi(z,\chi)=\sum_{n\le z}\chi(n)\Lambda(n)$. Following a standard proof of the Bombieri-Vinogradov Theorem (see \cite[Chapter 28]{Da}, for example), we have (for a suitable constant $c>0$)
\begin{equation}
\sum_{\substack{q<x^{1/2-\epsilon}\\ (q,B)=1}}\sup_{\substack{(a,q)=1\\ z\le x\log^4{x}}}\Bigl|\pi(z;q,a)-\frac{\pi(z)}{\phi(q)}\Bigr|\ll x\exp(-c\sqrt{\log{x}})+\log{x}\sum_{\substack{q<\exp(2c\sqrt{\log{x}})\\ (q,B)=1}}\sideset{}{^*}\sum_\chi\sup_{z\le x\log^4{x}}\frac{|\psi(z,\chi)|}{\phi(q)}.
\end{equation}
Combining these two statements and using the triangle inequality gives the bound required for (2).
\end{proof}
We now recall the construction of sieve weights from \cite[Section 7]{maynard-dense}. On first reading we recommend the reader not pay too much attention to the details; the key point is the existence of a weight $w(n)$ which will establish Theorem \ref{weight}. The reason it is necessary to know the construction is the technical issue that the weights $w(n)$ depend on a given admissible set of linear forms, and we require that the final estimates obtained are essentially uniform over similar admissible sets.

Let $W := \prod_{p \leq 2k^2;\, p \nmid B} p$.  For each prime $p$ not dividing $B$, 
let $r_{p,1}({\mathcal L}) < \dots < r_{p,\omega_{\mathcal L}(p)}({\mathcal L})$ be the elements $n$ of $[p]$ for which $p| \prod_{i=1}^k L_i(n)$. If $p$ is also coprime to $W$, then for each $1 \leq a \leq \omega_{\mathcal L}(p)$, let $j_{p,a} = j_{p,a}({\mathcal L})$ denote the least element of $[k]$ such that $p | L_{j_{p,a}}( r_{p,a}({\mathcal L}) )$.  

Let ${\mathcal D}_k(\mathcal{L})$ denote the set
\[\begin{split} {\mathcal D}_k(\mathcal{L}) := \{ (d_1,\dots,d_k) \in \mathbf{N}^k: \mu^2(d_1 \dots d_k)=1; (d_1 \dots d_k,WB)=1;\\
 (d_j,p)=1 \text{ whenever } p \nmid BW \text{ and } j \neq j_{p,1},\dots,j_{p,\omega_{\mathcal L}(p)} \}.
\end{split}\]

Define the singular series
$$ {\mathfrak S}({\mathcal L}) := \prod_{p \nmid B} \(1 - \frac{\omega_{\mathcal L}(p)}{p}\) \(1 - \frac{1}{p}\)^{-k},$$
and
$$ {\mathfrak S}_{WB}({\mathcal L}) := \prod_{p \nmid WB} \(1 - \frac{\omega_{\mathcal L}(p)}{p}\) \(1 - \frac{1}{p}\)^{-k},$$
the function
$$ \varphi_{\omega_{\mathcal L}}(d) := \prod_{p|d} (p - \omega_{\mathcal L}(p)),$$
and let $R$ be a quantity of size
$$ x^{\theta/10} \leq R \leq x^{\theta/3}.$$
Let $F:\mathscr{R}^k\rightarrow\mathscr{R}$ be a smooth function supported on the simplex
$$\mathcal{R}_k=\{ (t_1,\dots,t_k) \in \R_+^k: t_1+\dots+t_k \leq 1 \}.$$
For any $(r_1,\dots,r_k) \in {\mathcal D}_k(\mathcal{L})$ define
$$ y_{(r_1,\dots,r_k)}({\mathcal L}) := \frac{1_{{\mathcal D}_k(\mathcal{L})}(r_1,\dots,r_k) W^k B^k}{\varphi(WB)^k} {\mathfrak S}_{WB}({\mathcal L}) F\( \frac{\log r_1}{\log R}, \dots, \frac{\log r_k}{\log R} \).$$
For any $(d_1,\dots,d_k) \in {\mathcal D}_k(\mathcal{L})$, define
$$ \lambda_{(d_1,\dots,d_k)}({\mathcal L}) := \mu(d_1 \dots d_k) d_1 \dots d_k \sum_{d_i | r_i \text{ for } i=1,\dots,k} \frac{y_{(r_1,\dots,r_k)}({\mathcal L})}{\varphi_{\omega_{\mathcal L}}(r_1 \dots r_k)}, $$
and then define the function $w =  w_{k, {\mathcal L}, B, R}: \Z \to \R^+$ by
\be\label{wdef}
 w(n) := \left( \sum_{d_1,\dots,d_k: d_i | L_i(n) \text{ for all } i} \lambda_{(d_1,\dots,d_k)}(\mathcal{L}) \right)^2.
\ee
We note that the restriction of the support of $F$ to $\mathcal{R}_k$ means that $\lambda_{(d_1,\dots,d_k)}(\mathcal{L})$ and $y_{(r_1,\dots,r_k)}$ are supported on the set
$$ \mathcal{S}_k(\mathcal{L})=\mathcal{D}_k(\mathcal{L})\cap\{(d_1,\dots,d_k):\prod_{i=1}^kd_i\le R\}. $$
We then have the following result, a slightly modified form of Proposition 6.1 from \cite{maynard-dense}:

\begin{thm}\label{may}  Fix $\theta,\alpha>0$.  Then there exists a constant $C$ depending only on $\theta,\alpha$ such that the following holds.   Suppose that $({\mathcal A}, {\mathcal L}, {\mathscr P}, B, x, \theta)$ obeys Hypothesis 1 at some subset ${\mathcal L}'$ of ${\mathcal L}$.  Write $k := \# {\mathcal L}$, and suppose that $x \geq C$, $B \leq x^\alpha$, and $C \leq k \leq \log^{1/5} x$.  Moreover, assume that the coefficients $a_i, b_i$ of the linear forms $L_i(n) = a_i n + b_i$ in ${\mathcal L}$ obey the size bound $|a_i|, |b_i| \leq x^\alpha$ for all $i=1,\dots,k$.  
Then there exists a smooth function $F: \R^k \to \R$ depending only on $k$ and supported on the simplex $\mathcal{R}_k$, and quantities $I_k,J_k$ depending only on $k$ with
$$ I_k \gg (2k \log k)^{-k}$$
and
\begin{equation}\label{jk}
 J_k \asymp \frac{\log k}{k} I_k
\end{equation}
such that, for $w(n)$ given in terms of $F$ as above, the following assertions hold uniformly for $x^{\theta/10}\le R\le x^{\theta/3}$.
\begin{itemize}
\item We have
\begin{equation}\label{a-big}
 \sum_{n \in {\mathcal A}(x)} w(n) = \(1 + O \pfrac{1}{\log^{1/10} x} \) \frac{B^k}{\varphi(B)^k} {\mathfrak S}({\mathcal L}) \# {\mathcal A}(x) (\log R)^k I_k.
\end{equation}
\item For any linear form $L(n) = a_L n + b_L$ in ${\mathcal L}'$ with $a_L$ coprime to $B$ and $L(n)>R$ on $[x,2x]$, we have
\begin{equation}\label{b-big}
\begin{split}
 \sum_{n \in {\mathcal A}(x)} 1_{\mathscr P}(L(n)) w(n) &= \(1 + O \pfrac{1}{\log^{1/10} x} \) \frac{\phi(|a_L|)}{|a_L|}\frac{B^{k-1}}{\varphi(B)^{k-1}} {\mathfrak S}({\mathcal L}) \# {\mathscr P}_{L,{\mathcal A}}(x) (\log R)^{k+1} J_k\\
&\quad + O\( \frac{B^k}{\varphi(B)^k} {\mathfrak S}({\mathcal L}) \# {\mathcal A}(x)	 (\log R)^{k-1} I_k \).
\end{split}
\end{equation}
\item Let $L(n) = a_0 n + b_0$ be a linear form such that the discriminant
$$ \Delta_L := |a_0| \prod_{j=1}^k |a_0 b_j - a_j b_0|$$
is non-zero (in particular $L$ is not in ${\mathcal L}$).  Then
\begin{equation}\label{c-big}
 \sum_{n \in {\mathcal A}(x)} 1_{\mathscr P \cap [x^{\theta/10},+\infty)}(L(n)) w(n) \ll \frac{\Delta_L}{\varphi(\Delta_L)} \frac{B^k}{\varphi(B)^k} {\mathfrak S}({\mathcal L}) \# {\mathcal A}(x) (\log R)^{k-1} I_k.
\end{equation}
\item We have the crude upper bound
\begin{equation}\label{crude}
 w(n) \ll x^{2\theta/3+o(1)}
\end{equation}
for all $n \in \Z$.
\end{itemize}
Here all implied constants depend only on $\theta,\alpha$ and the implied constants in the bounds of Hypothesis 1.
\end{thm}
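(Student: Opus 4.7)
\medskip

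The plan is to follow the proof of Proposition 6.1 in \cite{maynard-dense} almost verbatim, treating the statement here as a mild repackaging with three minor differences that must be tracked: (a) Hypothesis 1 for primes need only hold at a subset $\mathcal{L}' \subseteq \mathcal{L}$, rather than all of $\mathcal{L}$; (b) the explicit presence of the exceptional modulus $B$ forces us to carry factors of $B/\varphi(B)$ and $B^{k-1}/\varphi(B)^{k-1}$ through the calculation; and (c) the asymptotic \eqref{b-big} is phrased in terms of $\#\mathscr{P}_{L,\mathcal{A}}(x)$ with the factor $\varphi(|a_L|)/|a_L|$ explicit, so we cannot simply invoke the prime number theorem in arithmetic progressions to replace it. Apart from these bookkeeping issues, the argument is the standard Selberg-type analysis of the weight \eqref{wdef}, together with the change of variables that converts the combinatorial sums in the $y_{(r_1,\ldots,r_k)}$ into the simplex integrals $I_k = \int_{\mathcal{R}_k} F^2$ and $J_k = \int_{\mathcal{R}_{k-1}} \bigl(\int_0^{1-\sum t_j} F\, dt_i\bigr)^2 \prod_{j\ne i} dt_j$ (summed over $i$), whose sizes are controlled by the choice of $F$ from \cite{maynard-gaps, maynard-dense}.

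For \eqref{a-big}: expand $w(n)^2$ and exchange the order of summation, producing a sum over pairs $(d_1,\ldots,d_k), (e_1,\ldots,e_k)\in\mathcal{D}_k(\mathcal{L})$ of $\lambda \lambda' \#\mathcal{A}(x;q,a)$ where $q = \prod_i \mathrm{lcm}(d_i,e_i)$ and $a$ is determined by CRT. Hypothesis 1(1) reduces this to $\#\mathcal{A}(x)/q$ with acceptable error (since $q \le R^2 \le x^{2\theta/3} < x^{\theta}$ and the number of tuples is $\ll R^{2+o(1)}$, which is absorbed by the extra $\log^{-100k^2}x$ savings). The main term then unfolds via the substitution from $\lambda$ back to $y$, which diagonalizes the quadratic form, yielding the stated asymptotic with $I_k(\log R)^k$; the factor $B^k/\varphi(B)^k$ arises from the local densities at primes dividing $B$, where we imposed $(d_1\cdots d_k, WB)=1$. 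For \eqref{b-big}: the same expansion now produces sums $\#\mathscr{P}_{L,\mathcal{A}}(x;q,a)$, which require $L \in \mathcal{L}'$ so that Hypothesis 1(2) applies; the replacement by $\#\mathscr{P}_{L,\mathcal{A}}(x)/\varphi_L(q)$ introduces the factor $\varphi(|a_L|)/|a_L|$ (from the definition of $\varphi_L$). Since prime detection is being done at only one form, only $k-1$ factors of $B/\varphi(B)$ survive, explaining $B^{k-1}/\varphi(B)^{k-1}$. Reorganizing the resulting quadratic form singles out the coordinate $i$ with $L = L_i$ and produces the integral $J_k$ with one extra $\log R$; summing over $i$ with $L=L_i$ accounts for the factor matching.

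For \eqref{c-big}: when $L \notin \mathcal{L}$, the tuple $\mathcal{L} \cup \{L\}$ may not be admissible at some primes, but it is admissible away from the discriminant $\Delta_L$. We therefore bound $\sum_n 1_{\mathscr P}(L(n))w(n)$ by an unconditional Selberg sieve on $L(n)$ alone (using only the inequality $1_{\mathscr P}(L(n)) \le (\text{sieve majorant})$, valid since $L(n) \ge x^{\theta/10}$), combined with the expansion of $w(n)$ as before. The extra local factor at each prime $p \mid \Delta_L$ is absorbed into $\Delta_L/\varphi(\Delta_L)$, and the saving of one $\log R$ (relative to \eqref{a-big}) comes because the sieve majorant for a single linear form gives $\log R$ in the denominator. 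Finally, \eqref{crude} follows from the trivial bounds $|y_{(r_1,\ldots,r_k)}({\mathcal L})| \ll 1$ and $|\lambda_{(d_1,\ldots,d_k)}({\mathcal L})| \ll R^{o(1)}$ together with the divisor bound $\tau_k(L_i(n)) \ll x^{o(1)}$, giving $w(n) \ll R^{2+o(1)} \ll x^{2\theta/3+o(1)}$.

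The main obstacle will be the careful verification that the error terms in Hypothesis 1 survive the expansion, since the number of lattice points $(d_1,\ldots,d_k,e_1,\ldots,e_k)$ under consideration is $\asymp R^{2+o(1)}$, and one needs the $\log^{-100k^2}x$ saving in Hypothesis 1 to beat this; this is why $k \le \log^{1/5}x$ is imposed. The other delicate point is ensuring uniformity of the implied constants in $\mathcal{L}$: the construction of $F$ and hence $w$ depends on $k$ only, not on the specific forms, and the local factors $\mathfrak{S}(\mathcal{L})$, $\mathfrak{S}_{WB}(\mathcal{L})$ absorb all the $\mathcal{L}$-dependence in the main terms, so this uniformity follows automatically once the error estimates are uniform (which is the content of Lemma \ref{landau-page}).
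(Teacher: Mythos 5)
Your proposal reconstructs the sieve computation from scratch, whereas the paper disposes of this theorem purely by citation: \eqref{a-big}, \eqref{b-big}, \eqref{c-big} and \eqref{crude} are quoted directly from Propositions 9.1, 9.2, 9.4 and Lemma 8.5(iii) of \cite{maynard-dense} (with $\xi=\theta/10$, $D=1$ for the third), and the bounds on $I_k$, $J_k$ from Lemma 8.6 there. In substance you are following the same route, since your sketch is essentially an outline of how those propositions are proved: the diagonalization $\lambda\mapsto y$, the reduction of congruence counts via Hypothesis 1(1)--(2), the one-dimensional upper-bound sieve on $L(n)$ for \eqref{c-big} with the local losses at $p\mid\Delta_L$ collected into $\Delta_L/\varphi(\Delta_L)$, and the trivial bounds for \eqref{crude}. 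Your accounting of the three deviations from \cite{maynard-dense} (Hypothesis 1 only at $\mathcal{L}'$, the explicit $B$-factors, the $\varphi(|a_L|)/|a_L|$ normalization via $\varphi_L$) matches what the paper's citation implicitly relies on.

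Two cautions. First, your error-term argument as literally stated would fail: the $\asymp R^{2+o(1)}$ pairs of tuples cannot be ``absorbed by the $\log^{-100k^2}x$ saving'' term by term, since $R^{2+o(1)}$ is a power of $x$. The correct mechanism is to bound the total error by $\lambda_{\max}^2\sum_{q\le R^2}\mu^2(q)\tau_{3k}(q)E_q$ and apply Cauchy--Schwarz, using Hypothesis 1(3) (and the trivial bound $E_q\ll\#\mathcal{A}(x)/q$) to control $\sum_q\tau_{3k}(q)^2E_q$ and Hypothesis 1(1) to control $\sum_qE_q$; the condition $k\le\log^{1/5}x$ then ensures the $\log^{O(k^2)}x$ divisor losses and the $(\log R)^{O(k)}$ size of $\lambda_{\max}$ are beaten by the $\log^{-100k^2}x$ saving. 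You gesture at the right ingredients but the step needs this averaged argument, not absorption. Second, your parenthetical definition of $J_k$ as a sum over $i$ is inconsistent with the normalization \eqref{jk}: since \eqref{b-big} concerns a single form $L\in\mathcal{L}'$, the relevant quantity is the single-coordinate integral $J_k^{(i)}$ (all equal by symmetry of $F$), which is what satisfies $J_k\asymp\frac{\log k}{k}I_k$; summing over $i$ would give $\asymp(\log k)I_k$ instead.
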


\begin{proof} 
The first estimate \eqref{a-big} is given by \cite[Proposition 9.1]{maynard-dense}, \eqref{b-big} follows from \cite[Proposition 9.2]{maynard-dense} in the case $(a_L,B)=1$, \eqref{c-big} is given by \cite[Propositon 9.4]{maynard-dense} (taking $\xi := \theta/10$ and $D := 1$), 
and the final statement \eqref{crude} is given by part (iii) of \cite[Lemma 8.5]{maynard-dense}. The bounds for $J_k$ and $I_k$ are given by \cite[Lemma 8.6]{maynard-dense}.
\end{proof}

We remark that the estimate \eqref{c-big} is only needed here to establish the estimate \eqref{wcp} which is not, strictly speaking, necessary for the results of this paper, but will be useful in a subsequent work \cite{FMT} based on this paper.
\section{Verification of sieve estimates}\label{sec:verification}

We can now prove Theorem \ref{weight}.  Let $x,y,r,h_1,\dots,h_r$ be as in that theorem.
  
We set
\begin{align*}
{\mathcal A} &:= \Z,  \\
\theta &:= 1/3,\\
k &:= r, \\
R &:= (x/4)^{\theta/3},
\end{align*}
and let $B = x^{o(1)}$ be the quantity from Lemma \ref{landau-page}.

We define the function $w: \PP \times \Z \to\R^+$ by setting
$$ w(p,n) := 1_{[-y,y]}(n) w_{k, {\mathcal L}_p, B, R}(n)$$
for $p \in \PP$ and $n \in \Z$, where ${\mathcal L}_p$ is the (ordered) collection of linear forms $n \mapsto n+h_i p$ for $i=1,\dots,r$, and $w_{k, {\mathcal L}_p, B, R}$ was defined in \eqref{wdef}.  Note that the admissibility of the $r$-tuple $(h_1,\dots,h_r)$ implies the admissibility of the linear forms $n \mapsto n+h_i p$.

A key point is that many of the key components of $w_{k, {\mathcal L}_p, B, R}$ are essentially uniform in $p$.  Indeed, for any prime $s$, the polynomial $\prod_{i=1}^k (n+h_i p)$ is divisible by $s$ only at the residue classes $-h_i p \mod s$.  From this we see that
$$ \omega_{{\mathcal L}_{p}}(s) =\#\{h_i\pmod{s}\} \hbox{ whenever } s \neq p.$$
In particular, $ \omega_{{\mathcal L}_{p}}(s)$ is independent of $p$ as long as $s$ is distinct from $p$, so
\begin{align}
{\mathfrak S}( {\mathcal L}_p )& = \(1 + O\(\frac{k}{x}\)\) {\mathfrak S},\label{slp}\\
{\mathfrak S}_{BW}\( {\mathcal L}_p \)& = \(1 + O\(\frac{k}{x}\)\) {\mathfrak S}_{BW},\nonumber
\end{align}
for some ${\mathfrak S}$, ${\mathfrak S}_{BW}$ independent of $p$, with the error terms uniform in $p$. Moreover, if $s\nmid WB$ then $s>2k^2$, so all the $h_i$ are distinct $\mod{s}$ (since the $h_i$ are less than $2k^2$). Therefore, if $s\nmid pWB$ we have $\omega_{\mathcal{L}_p}(s)=k$ and
$$ \{j_{s,1}({\mathcal L}_{p}),\dots,j_{s,\omega(s)}(\mathcal{L}_{p})\} = \{1,\dots,k\}. $$
Since all $p\in\PP$ are at least $x/2>R$, we have $s\ne p$ whenever $s\le R$. From this we see that $\mathcal{D}_k(\mathcal{L}_p)\cap \{(d_1,\dots,d_k):\prod_{i=1}^k d_i\le R\}$ is independent of $p$, and so we have
\[
\lambda_{(d_1,\dots,d_k)}({\mathcal L}_p) =\frac{\mathfrak{S}(\mathcal{L}_p)}{\mathfrak{S}}\lambda_{(d_1,\dots,d_k)}= \(1 + O\(\frac{k}{x}\)\) \lambda_{(d_1,\dots,d_k)},
\]
for some $\lambda_{(d_1,\dots,d_k)}$ independent of $p$, and where the error term is independent of $d_1,\dots,d_k$.

It is clear that $w$ is non-negative and supported on $\PP \times [-y,y]$, and from \eqref{crude} we have \eqref{w-triv}.  We set
\begin{equation}\label{tau-def}
 \tau := 2\frac{B^k}{\varphi(B)^k} {\mathfrak S} (\log R)^k (\log x)^k I_k 
\end{equation}
and
\begin{equation}\label{u-def}
 u := \frac{\varphi(B)}{B} \frac{\log R}{\log x} \frac{k J_k}{2I_k}.
\end{equation}
Since $B$ is either $1$ or prime, we have 
$$
 \frac{\varphi(B)}{B} \asymp 1,
$$
and from definition of $R$ we also have
\begin{equation}\label{rx}
 \frac{\log R}{\log x} \asymp 1.
\end{equation}
From \eqref{jk} we thus obtain \eqref{u-bound}.  From \cite[Lemma 8.1(i)]{maynard-dense} we have
$$ {\mathfrak S} \geq x^{-o(1)},$$
and from \cite[Lemma 8.6]{maynard-dense} we have
$$ I_k = x^{o(1)},$$
and so we have the lower bound \eqref{alpha-crude}.  (In fact, we also have a matching upper bound $\tau \leq x^{o(1)}$, but we will not need this.)

It remains to verify the estimates \eqref{wap} and \eqref{wbp}.  We begin with \eqref{wap}.  Let $p$ be an element of $\PP$.  We shift the $n$ variable by $3y$ and rewrite
$$ \sum_{n \in \Z} w(p,n) = \sum_{n \in {\mathcal A}(2y)} w_{k, {\mathcal L}_{p} - 3y, B, R}(n) + O( x^{1-c+o(1)} )$$
where ${\mathcal L}_p - 3y$ denotes the set of linear forms $n \mapsto n+h_ip - 3y$ for $i=1,\dots,k$.  (The $x^{1-c+o(1)}$ error arises from \eqref{w-triv} and roundoff effects if $y$ is not an integer.)  This set of linear forms remains admissible, and
$$
{\mathfrak S}({\mathcal L}_p - 3y) = {\mathfrak S}({\mathcal L}_p) = \(1 + O\(\frac{k}{x}\)\) {\mathfrak S}.$$
The claim \eqref{wap} now follows from \eqref{tau-def} and the first conclusion \eqref{a-big} of Theorem \ref{may} (with $x$ replaced by $2y$, ${\mathcal L}'=\emptyset$, and ${\mathcal L} = {\mathcal L}_p - 3y$), using Lemma \ref{landau-page} to obtain Hypothesis 1.

Now we prove \eqref{wbp}.  Fix $q \in \QQ$ and $i\in\{1,\dots,k\}$.  We introduce the set $\tilde {\mathcal L}_{q,i}$ of linear forms $\tilde L_{q,i,1},\dots, \tilde L_{q,i,k}$, where
$$ \tilde L_{q,i,i}(n) := n$$
and
$$ \tilde L_{q,i,j}(n) := q + (h_j - h_i) n \qquad (1\le j\le k, j\ne i)$$
We claim that this set of linear forms is admissible.   Indeed, for any prime $s\ne q$, the solutions of
$$
n \prod_{j\ne i} ( q + (h_j - h_i) n) \equiv 0 \pmod{s}
$$
are $n\equiv 0$ and $n\equiv -q(h_j-h_i)^{-1}\pmod{s}$ for
$h_j\not\equiv h_i\pmod s$, the number of which is equal to $\#\{h_j\pmod{s}\}$.
Thus,
\begin{align*}
 {\mathfrak S}( \tilde {\mathcal L}_{q,i} ) &= \(1 + O\(\frac{k}{x}\)\) {\mathfrak S},\\
{\mathfrak S}_{BW}( \tilde {\mathcal L}_{q,i} ) &= \(1 + O\(\frac{k}{x}\)\) {\mathfrak S}_{BW},
\end{align*}
as before. Again, for $s\nmid WB$ we have that the $h_i$ are distinct $\pmod{s}$, and so if $s<R$ and $s\nmid WB$ we have $\omega_{\tilde {\mathcal L}_{q,i}}(s) =k$ and
$$ \{j_{s,1}(\tilde{{\mathcal L}}_{q,i}),\dots,j_{s,\omega(s)}(\tilde{\mathcal{L}}_{q,i})\} =\{1,\dots,k\}.$$
 In particular, $\mathcal{D}_k(\tilde{\mathcal{L}}_{q,i})\cap \{(d_1,\dots,d_k):\prod_{i=1}^kd_i\le R\}$ is independent of $q,i$ and so
\begin{align*}
\lambda_{(d_1,\dots,d_k)}(\tilde {\mathcal L}_{q,i}) &= \(1 + O\(\frac{k}{x}\)\) \lambda_{(d_1,\dots,d_k)},
\end{align*}
where again the $O(\frac{k}{x})$ error is independent of $d_1,\dots,d_k$.  From this, since $q-h_i p$ takes values in $[-y,y]$, we have that
$$ w_{k, \tilde {\mathcal L}_{q,i}, B, R}(p) = \(1 + O\(\frac{k}{x}\)\) 
w_{k, {\mathcal L}_p, B, R}(q-h_i p)$$
whenever $p \in \PP$ (note that the $d_i$ summation variable implicit on both sides of this equation is necessarily equal to $1$). Thus, recalling that $\mathcal{P}=\mathscr{P}\cap(x/2,x]$, we can write the left-hand side of \eqref{wbp} as
$$ \(1 + O\(\frac{k}{x}\)\) \sum_{n \in {\mathcal A}(x/2)} 1_{\mathscr P}(\tilde L_{q,i,i}(n)) w_{k, \tilde {\mathcal L}_{q,i}, B, R}(n).$$
Applying the second conclusion \eqref{b-big} of Theorem \ref{may} (with $x$ replaced by $x/2$, ${\mathcal L}'= \{\tilde L_{q,i,i}\}$, and ${\mathcal L} = \tilde {\mathcal L}_{q,i}$) and using Lemma \ref{landau-page} to obtain Hypothesis 1, this expression becomes 
\begin{align*}
& \(1 + O \pfrac{1}{\log_2^{10} x} \) \frac{B^{k-1}}{\varphi(B)^{k-1}} {\mathfrak S} \# {\mathscr P}_{\tilde L_{q,i,i},{\mathcal A}}(x/2) (\log R)^{k+1} J_k\\
&\quad + O\( \frac{B^k}{\varphi(B)^k} {\mathfrak S} \# {\mathcal A}(x/2)	 (\log R)^{k-1} I_k \).
\end{align*}
Clearly $\# {\mathcal A}(x/2) = O(x)$, and from the prime number theorem one has
$$ \# {\mathscr P}_{\tilde L_{q,i,i},{\mathcal A}}(x/2) = \(1 + O \pfrac{1}{\log_2^{10} x} \) \frac{x}{2 \log x}.$$
for any fixed $C>0$.  Using \eqref{tau-def}, \eqref{u-def}, we can thus write the left-hand side of \eqref{wbp} as
$$
\(1 + O \pfrac{1}{\log_2^{10} x} \) \frac{u}{k} \tau \frac{x}{2\log^k x} + O\( \frac{1}{\log R} \tau \frac{x}{\log^k x} \).$$
From \eqref{r-bound}, \eqref{u-bound}, the second error term may be absorbed into the first, and \eqref{wbp} follows.

Finally, we prove \eqref{wcp}.  Fix $h=O(y/x)$ not equal to any of the $h_i$, and fix $p \in \PP$.  By the prime number theorem, it suffices to show that
$$ \sum_{q \in \QQ} w( p, q - hp ) \ll \frac{1}{\log^{10}_2 x} \tau \frac{y}{\log^k x}.$$
By construction, the left-hand side is the same as
$$ \sum_{x-hp < n \leq y-hp} 1_{\mathscr{P}}(n+hp) w_{k, {\mathcal L}_p, B, R}(n)$$
which we can shift as
$$ \sum_{n \in {\mathcal A}(y-x)} 1_{\mathscr{P} \cap [x^{\theta/10},+\infty)}(n-y+2x) w_{k, {\mathcal L}_p-y+2x-hp, B, R}(n) + O( x^{1-c+o(1)} )$$
where again the $O(x^{1-c+o(1)})$ error is a generous upper bound for roundoff errors.  This error is acceptable and may be discarded.  Applying \eqref{c-big}, we may then bound the main term by
$$
\ll
\frac{\Delta}{\varphi(\Delta)} \frac{B^k}{\varphi(B)^k} {\mathfrak
  S}({\mathcal L_p} - y+2x-hp) y (\log R)^{k-1} I_k =
\frac{\Delta}{\varphi(\Delta)} \frac{B^k}{\varphi(B)^k} {\mathfrak
  S}({\mathcal L_p}) y (\log R)^{k-1} I_k
$$
where
$$ \Delta := \prod_{j=1}^k |hp-h_ip|.$$
Applying \eqref{slp}, \eqref{tau-def}, we may simplify the above upper bound as
$$
\ll
\frac{\Delta}{\varphi(\Delta)} \frac{y}{(\log R) (\log x)^k} \tau.
$$
Now $h-h_i = O(y/x) = O( \log x )$ for each $i$, hence $\Delta
\le (O(x\log x))^k$, and it follows from 
\eqref{xlog}, \eqref{rx} and \eqref{r-bound}  that
$$
\frac{\Delta}{\varphi(\Delta)} \ll \log_2 \Delta \ll \log_2 x \ll
\frac{\log R}{\log^{10}_2 x}.
$$

This concludes the proof of Theorem \ref{weight}, and hence Theorem \ref{mainthm}.

\end{document}